\newtheorem{theorem}{Theorem}[section]
\newtheorem{proposition}[theorem]{Proposition}
\newtheorem{lemma}[theorem]{Lemma}
\newtheorem{corollary}[theorem]{Corollary}
\newcounter{Examplecount}
\newenvironment{remark}[1][Remark.]{\begin{trivlist}
\item[\hskip \labelsep {\bfseries #1}]}{\end{trivlist}}
\newcommand{\done}[1]{}
\def\Ddots{\mathinner{\mkern1mu\raise\p@
\vbox{\kern7\p@\hbox{.}}\mkern2mu
\raise4\p@\hbox{.}\mkern2mu\raise7\p@\hbox{.}\mkern1mu}}
\newcommand\beq{\begin{equation}}
\newcommand\eeq{\end{equation}}
\newcommand\bce{\begin{center}}
\newcommand\ece{\end{center}}
\newcommand\bea{\begin{eqnarray}}
\newcommand\eea{\end{eqnarray}}
\newcommand\ba{\begin{array}}
\newcommand\ea{\end{array}}
\newcommand\ben{\begin{enumerate}}
\newcommand\een{\end{enumerate}}
\newcommand\bit{\begin{itemize}}
\newcommand\eit{\end{itemize}}
\newcommand\brr{\begin{array}}
\newcommand\err{\end{array}}
\newcommand\bt{\begin{tabular}}
\newcommand\et{\end{tabular}}
\newcommand\ms{\medskip}
\renewcommand\S{{\mathcal S}}
\def\red{\operatorname{st}}
\def\O{O}
\def\I{\mathcal{I}}
\def\lext{\mathcal{L}}
\def\N{{\mathcal N}}
\title{The most and the least avoided consecutive patterns\thanks{Research
partially supported by NSF grant DMS-1001046}}
\author{Sergi Elizalde~\thanks{Department of Mathematics,
Dartmouth College, Hanover, NH 03755. E-mail: \texttt{sergi.elizalde@dartmouth.edu}}
}
\begin{document}

\maketitle

\begin{abstract}
We prove that the number of permutations avoiding an arbitrary consecutive pattern $\sigma$ of length $m$ is asymptotically largest when $\sigma=12\dots m$, and smallest when $\sigma=12\dots(m-2)m(m-1)$.
This settles a conjecture of the author and Noy from 2001, as well as another recent conjecture of Nakamura.
We also show that among non-overlapping patterns of length $m$, the pattern $134\dots m2$ is the one for which the number of permutations avoiding it is asymptotically largest.
\end{abstract}


\section{Introduction and background}

The notion of consecutive patterns is a variation of the more standard definition of patterns in permutations. 
In an occurrence of a consecutive pattern in a permutation, the positions of the entries are required to be adjacent.
Consecutive patterns appear naturally in fundamental combinatorics. For instance,
occurrences of $21$ are descents of the permutation, occurrences of $132$ and $231$ are peaks, and permutations avoiding $123$ and $321$ are called alternating permutations.
Other than these implicit appearances, the systematic study of consecutive patterns in permutations was started in 2001 by Elizalde and Noy~\cite{EliNoy},
who gave generating functions counting occurrences of some consecutive patterns in permutations, by expressing them as solutions of certain differential equations.
Since then, significant progress has been made by many authors, including
Aldred, Atkinson, Baxter, B\'ona, Claesson, Dotsenko, Duane, Ehrenborg, Jones, Khoroshkin, Kitaev, Mansour, McCaughan, Mendes, Nakamura, Perry, Remmel, Shapiro, and Zeilberger.
However, the main conjecture from the original paper~\cite{EliNoy} has remained open all these years.
The conjecture states that among all consecutive patterns of a fixed length $m$, the increasing pattern $12\dots m$ (and, by symmetry, the decreasing pattern $m\dots21$)
is the one for which the number of permutations avoiding it is asymptotically largest. This conjecture is often mentioned in the literature~\cite{Bon08,BonPP,Bon,Nak}.
The first main result of the present paper is a proof of this conjecture. We will refer to it as the {\em Consecutive Monotone Pattern (CMP) Conjecture}, given that $12\dots m$ and $m\dots 21$ are sometimes called {\em monotone} patterns.

Aside from supporting experimental evidence, the intuition behind the conjecture can be explained as follows.
It is easy to see that the total number of occurrences of a pattern $\sigma$ of length $m$ in all $n!$ permutations of length $n$ does not depend on $\sigma$. When $\sigma$ is monotone,
occurrences of $\sigma$ can overlap with each other in more ways than for any other pattern, so a lot of permutations of length $n$ will contain many occurrences of $\sigma$. It seems plausible then
that, to compensate, there must be a large number of permutations (more than for any other pattern) not containing any occurrence of $\sigma$.

Even so, the is a caveat in the above reasoning. A similar intuitive argument seems to suggest that the analogous conjecture for classical patterns (namely,
when entries in an occurrence are not required to be adjacent) should hold as well. However, this is known to be false: B\'ona~\cite{Bon97} showed that
for $n\ge7$, there are fewer permutations of length $n$ avoiding the classical pattern $1234$ than avoiding the classical pattern $1324$.

In a different attempt to shed some light on the conjecture for consecutive patterns,
B\'ona~\cite{Bon08} considered another notion of pattern containment that is even more restrictive, by requiring not only the positions but also the values of an occurrence of the pattern to be adjacent.
Under this restrictive definition, he was able to show that the analogue of the conjecture holds for {\em most} patterns, that is,
the number of permutations avoiding the pattern $12\dots m$ in adjacent values and positions is larger than for any most other patterns of length $m$ (see~\cite{Bon08} for details).

The CMP Conjecture is known to hold in some special cases.
The case $m=3$ was proved in~\cite{EliNoy}. More recently, Elizalde and Noy~\cite{EliNoy2} showed
that the number of permutations avoiding $12\dots m$ is asymptotically larger than the number of permutations avoiding any fixed non-overlapping pattern of length $m$.
Non-overlapping patterns are those for which two occurrences cannot overlap in more than one position.

The second main result of this paper is the proof of a recent related conjecture of Nakamura~\cite[Conjecture~2]{Nak}
which, made on computational evidence, states that the pattern $12\dots (m-2)m(m-1)$ is the one for which the number of permutations avoiding it is asymptotically {\em smallest}.
This conjecture is {\em complementary} to the CMP Conjecture.
We remark that, once again, the analogue for classical patterns of Nakamura's conjecture does not hold: as shown by B\'ona~\cite{Bon97},
the are more permutations of length $n\ge 6$ avoiding the classical pattern $1243$ than avoiding the classical pattern $1423$.
In fact, it was proved in~\cite{BWX} that, as classical patterns, the number of permutations avoiding $12\dots m$ is the same as the number of permutations avoiding $12\dots (m-2)m(m-1)$.
It is therefore surprising that their behavior is completely different as consecutive patterns, since in such setting these two are the most and the least avoided patterns, respectively.

The third result in this paper concerns non-overlapping patterns. We prove a recent conjecture of the author and Noy~\cite{EliNoy2}
stating that among non-overlapping patterns of length~$m$, the pattern $134\dots m2$ is the one for which the number of permutations avoiding it is asymptotically largest.

In the rest of this section we give some background on consecutive patterns and we set the notation for the rest of the paper.
We also describe some of the ingredients in our proofs: singularity analysis of generating functions, the cluster method of Goulden and Jackson, and linear extensions of posets.
The CMP conjecture is proved in Section~\ref{sec:most}. In Section~\ref{sec:nol} we discuss non-overlapping patterns, and we find the most and the least avoided ones.
Finally, in Section~\ref{sec:least} we prove Nakamura's conjecture stating that $12\dots (m-2)m(m-1)$ is the least avoided pattern of length $m$. We end discussing some open problems in Section~\ref{sec:final}.

\subsection{Consecutive patterns}

For a sequence $\tau=\tau_1\tau_2\dots\tau_k$ of distinct positive integers, let $\red({\tau})$ denote the permutation of length~$k$ obtained by
replacing the smallest entry of $\tau$ with~$1$, the second smallest with~$2$, and so on. For example, $\red(394176) = 263154$.
Given permutations $\pi \in \S_n$ and $\sigma \in \S_m$, an {\em occurrence} of $\sigma$ in $\pi$ as a consecutive pattern is a subsequence of $m$ adjacent entries of $\pi$ such that $\red(\pi_i \cdots \pi_{i+m-1}) = \sigma$.
For example, in $\pi=15243$, the subsequences $152$ and $243$ are two occurrences of the pattern $\sigma=132$.
Denote by  $c_\sigma(\pi)$ the number of occurrences of $\sigma$ in $\pi$ as a consecutive pattern. If $c_\sigma(\pi)=0$, we say that $\pi$ {\em avoids} $\sigma$. Let $\alpha_n(\sigma)$
be the number of permutations in $\S_n$ that avoid $\sigma$ as a consecutive pattern. The notions of occurrence, containment and avoidance in this paper always refer to consecutive patterns, even if it is not explicitly stated.

Let
$$P_{\sigma}(u,z)=\sum_{n\ge0} \sum_{\pi\in\S_n} u^{c_\sigma(\pi)}\frac{z^n}{n!}$$ be the exponential generating function for occurrences of $\sigma$ in permutations, and let
$\omega_\sigma(u,z)=1/P_{\sigma}(u,z)$.
Note that the generating function for permutations avoiding $\sigma$ is then
$$P_\sigma(0,z)=\frac{1}{\omega_\sigma(0,z)}=\sum_{n\ge0} \alpha_n(\sigma)\frac{z^n}{n!}.$$
When there is no confusion, we will write $\omega_\sigma(z)$ as a shorthand for $\omega_\sigma(0,z)$.
In the rest of the paper we assume that the length of the pattern $\sigma$ is $m\ge2$.

We denote by $\O_\sigma$ the set of overlaps of $\sigma$, which is defined as the set of indices $i$ with $1\le i<m$
such that $\red(\sigma_{i+1}\sigma_{i+2}\dots\sigma_m)=\red(\sigma_1\sigma_2\dots \sigma_{m-i})$.
Equivalently, $i\in\O_\sigma$ if two occurrences of $\sigma$ in a permutation can have starting positions at distance $i$ from each other.
Note that $m-1\in \O_\sigma$ for every $\sigma\in\S_m$. If $m\ge3$, a pattern $\sigma\in\S_m$ for which $\O_\sigma=\{m-1\}$ is said to be
{\em non-overlapping}. Equivalently, $\sigma$ is non-overlapping if two occurrences of $\sigma$ in a permutation cannot overlap in more than one position.
For example, the patterns $132$, $1243$, $1342$, $21534$ and $34671285$ are non-overlapping.
Non-overlapping patterns have been studied by Duane and Remmel~\cite{DR} and by B\'ona~\cite{Bon},
who shows that the proportion of non-overlapping patterns of any length $m$ is at least $0.364$.
It is easy to see that $1\in\O_\sigma$ if and only if $\sigma$ is monotone.

An important problem in permutation patterns is to determine when two patterns are avoided by the same number of permutations of length $n$ for every $n$ or, more generally, when
the same distribution of occurrences of the two patterns on permutations is the same. We discuss here only the case of consecutive patterns.
We say that two patterns $\sigma$ and $\tau$ are {\em strongly c-Wilf-equivalent} if $P_\sigma(u,z)=P_\tau(u,z)$, and that they are {\em c-Wilf-equivalent} if $P_\sigma(0,z)=P_\tau(0,z)$.
The last condition can be rephrased as $\alpha_n(\sigma)=\alpha_n(\tau)$ for all $n$. Nakamura~\cite[Conjecture~6]{Nak} conjectures that two patterns are strongly c-Wilf-equivalent iff they are c-Wilf-equivalent.
A complete classification into c-Wilf-equivalence classes is known for patterns of length up to 6, and in these cases they coincide with strong c-Wilf-equivalence classes.
It was shown in~\cite{EliNoy} that there are two equivalence classes of patterns of length~$3$, represented by the patterns $123$ and $132$, and seven classes of patterns of length~$4$, represented by
$1234$, $2413$, $2143$, $1324$, $1423$, $1342$, and $1243$. It was later proved in~\cite{Nak,EliNoy2} that there are $25$ classes for patterns of length~$5$, and $92$ for patterns of length~$6$.

It is clear that any pattern $\sigma=\sigma_1 \cdots \sigma_m$ is strongly c-Wilf-equivalent to its reversal $\sigma_m \cdots \sigma_1$ and its complementation $(m+1-\sigma_1) \cdots (m+1 - \sigma_m)$.
Using these operations, every $\sigma\in\S_m$ is strongly c-Wilf-equivalent to a pattern with $\sigma_1<\sigma_m$ and $\sigma_1+\sigma_m\le m+1$.

The main results of this paper, which settle three conjectures from~\cite{EliNoy}, \cite{Nak}, and~\cite{EliNoy2}, can be summarized as follows.

\bit\item For every $\sigma\in\S_m$ there exists $n_0$ such that $$\alpha_n(12\dots(m-2)m(m-1)) \le \alpha_n(\sigma)\le \alpha_n(12\dots m)$$ for all $n\ge n_0$.
\item For every non-overlapping $\sigma\in\S_m$, there exists $n_0$ such that $$\alpha_n(12\dots(m-2)m(m-1))\le\alpha_n(\sigma)\le\alpha_n(134\dots m2)$$ for all $n\ge n_0$.
\eit
These statements will be split into Theorem~\ref{thm:main}, Theorem~\ref{thm:nol}, and Theorem~\ref{thm:nak}, which will be proved in different sections.

\subsection{Asymptotic behavior}\label{sec:asym}

The results in this paper concern the asymptotic behavior of the sequences $\alpha_n(\sigma)$ for different patterns $\sigma$. When comparing their growth rates, the following result from~\cite{Eliasym} will be useful.

\begin{proposition}[\cite{Eliasym}]\label{prop:lim} For every $\sigma\in\S_m$ with $m\ge3$,
the limit $$\lim_{n\rightarrow\infty}\left(\frac{\alpha_n(\sigma)}{n!}\right)^{1/n}$$ exists, and it is strictly between $0$ and $1$.
\end{proposition}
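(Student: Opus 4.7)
The plan is to combine the Goulden--Jackson cluster method with singularity analysis. The cluster method gives $P_\sigma(0,z)=1/\omega_\sigma(z)$, where $\omega_\sigma(z)=\sum_{n\ge 0}\omega_n z^n/n!$ is an entire function (the coefficients $\omega_n$ are signed counts of ``clusters'' of $\sigma$, and the easy bound $|\omega_n|\le n!$ makes the series converge everywhere). Since $P_\sigma(0,z)=\sum_n \alpha_n(\sigma)z^n/n!$ has nonnegative Taylor coefficients, Pringsheim's theorem forces its radius of convergence $R$ to be a real positive singularity, and, because $\omega_\sigma$ is entire, this singularity must be a zero of $\omega_\sigma$. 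Hence $R$ equals the smallest positive real zero of $\omega_\sigma$ and $\limsup_n (\alpha_n(\sigma)/n!)^{1/n}=1/R$.

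The second step is to show $1<R<\infty$. For $R>1$, a disjoint-windows bound suffices: partition $\{1,\dots,n\}$ into $\lfloor n/m\rfloor$ consecutive blocks of length $m$ and observe that for a uniformly random $\pi\in\S_n$ the events ``block $i$ is an occurrence of $\sigma$'' are mutually independent of probability $1/m!$ each (independence follows from the exchangeability of permutations: conditioning on the pattern on one block leaves the remaining positions uniform up to relabeling). This yields
\[
\frac{\alpha_n(\sigma)}{n!}\le\left(1-\frac{1}{m!}\right)^{\lfloor n/m\rfloor},
\]
whence $R\ge(1-1/m!)^{-1/m}>1$. For $R<\infty$ it is enough to produce $c=c(\sigma)>0$ with $\alpha_n(\sigma)/n!\ge c^n$; the Lov\'asz Local Lemma applied to the family of bad events ``window $i$ is an occurrence of $\sigma$'' (each of probability $1/m!$, depending on at most $2(m-1)$ others) delivers such a bound whenever $m$ is large enough, and the finitely many remaining small values of $m$ are handled by explicit constructions (e.g.\ for $m=3$, alternating permutations avoid both $123$ and $321$ and are exponentially numerous relative to~$n!$).

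The final and hardest step is to promote this $\limsup$ to a genuine limit equal to $1/R$. The cleanest route is to show that $R$ is a \emph{simple} zero of $\omega_\sigma$, and the \emph{unique} zero of $\omega_\sigma$ on the circle $|z|=R$: then the transfer theorem of Flajolet--Odlyzko yields $\alpha_n(\sigma)/n!\sim C R^{-n}$ for some $C>0$ and the proposition follows. Simplicity can be extracted from the monotonicity $\alpha_{n+1}(\sigma)\le(n+1)\,\alpha_n(\sigma)$ (which holds because deleting the last entry of a $\sigma$-avoider in $\S_{n+1}$ still avoids $\sigma$): a multiple zero would introduce a polynomial factor in the asymptotic expansion of $\alpha_n(\sigma)/n!$, contradicting the fact that it is nonincreasing. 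Uniqueness on the critical circle is the main obstacle; I expect to handle it by a Perron--Frobenius style argument applied to the linear recursion hidden inside the cluster method, combined with the observation that the support $\{n:\alpha_n(\sigma)>0\}$ has greatest common divisor~$1$ (immediate since $\alpha_n(\sigma)=n!$ for all $n<m$). This analytic input---ruling out off-axis dominant zeros of $\omega_\sigma$---is where the real technical work lies; once it is in hand, the asymptotic $\alpha_n(\sigma)/n!\sim C R^{-n}$ and hence the proposition are immediate.
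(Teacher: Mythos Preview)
The paper does not prove this proposition; it is quoted from \cite{Eliasym}. The argument there is far more elementary than your route and bypasses singularity analysis entirely. Write $p_n=\alpha_n(\sigma)/n!$. For a uniformly random $\pi\in\S_{p+q}$, the relative orders of the blocks $\pi_1\cdots\pi_p$ and $\pi_{p+1}\cdots\pi_{p+q}$ are independent and uniform on $\S_p\times\S_q$; since any consecutive occurrence of $\sigma$ inside a block is also an occurrence in $\pi$, the event $\{\pi\text{ avoids }\sigma\}$ is contained in the intersection of the two (independent) block-avoidance events. Hence $p_{p+q}\le p_p\,p_q$, so $\log p_n$ is subadditive, and Fekete's lemma gives that $\lim_n p_n^{1/n}$ exists and equals $\inf_n p_n^{1/n}$. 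Your own disjoint-windows bound (which is just this inequality iterated) then yields $\rho_\sigma\le(1-1/m!)^{1/m}<1$, and $\rho_\sigma>0$ follows from any exponential lower bound on $p_n$, as you indicate. No analysis of zeros of $\omega_\sigma$ is needed for the proposition.

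Your proposal, by contrast, aims at the much stronger statement $\alpha_n(\sigma)/n!\sim \gamma_\sigma\rho_\sigma^n$. That is Theorem~\ref{thm:EKP} of Ehrenborg--Kitaev--Perry, proved via spectral theory, and the paper explicitly lists finding a non-spectral proof as an open problem (Section~\ref{sec:final}). Several of your steps are incomplete or incorrect. First, the bound $|\omega_n|\le n!$ only gives radius of convergence $\ge 1$ for $\sum\omega_n z^n/n!$, not that $\omega_\sigma$ is entire; analyticity on $|z|\le C$ is true but needs the finer estimate of Corollary~\ref{cor:nosing}. Second, your simplicity argument fails: a double zero at $R=\rho_\sigma^{-1}$ would give $p_n\sim Cn\rho_\sigma^n$, and since $\rho_\sigma<1$ this sequence \emph{is} eventually decreasing, so there is no contradiction with $p_{n+1}\le p_n$. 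Third, and most seriously, uniqueness of the dominant zero on $|z|=\rho_\sigma^{-1}$ is left as ``I expect to handle it by a Perron--Frobenius style argument''; this is exactly the hard part, and neither the gcd-of-support remark nor the vague appeal to a hidden linear recursion is a proof. In short, you are attempting to settle an open problem in order to deduce a proposition that has a two-line elementary proof via Fekete.
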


We denote this limit by $\rho_\sigma$, and we call it the {\em growth rate} of $\sigma$. An elementary fact from singularity analysis of generating functions, called the
Exponential Growth Formula in~\cite[Theorem IV.7]{FS}, states in our case that $\rho_\sigma^{-1}$ is
the modulus of a singularity nearest to the origin (i.e. the radius of convergence) of $P_\sigma(0,z)$. Additionally, since
$P_\sigma(0,z)$ has non-negative coefficients, Pringsheim's Theorem~\cite[Theorem IV.6]{FS}
implies that this function has a real singularity at $z=\rho_\sigma^{-1}$.

\begin{theorem}[\cite{FS}]\label{thm:sing}
For every $\sigma\in\S_m$ with $m\ge3$,
$P_\sigma(0,z)$ has a singularity at $z=\rho_\sigma^{-1}$ and no singularities in $|z|<\rho_\sigma^{-1}$.
\end{theorem}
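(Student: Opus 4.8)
The plan is to obtain this statement as a direct specialization of two classical facts from singularity analysis that are already quoted in the paragraph preceding the theorem: the Exponential Growth Formula and Pringsheim's Theorem, applied on top of Proposition~\ref{prop:lim}.

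The first step is to identify the radius of convergence of $P_\sigma(0,z)$. Writing $P_\sigma(0,z)=\sum_{n\ge0}a_nz^n$ with $a_n=\alpha_n(\sigma)/n!\ge0$, Proposition~\ref{prop:lim} asserts that $a_n^{1/n}=\bigl(\alpha_n(\sigma)/n!\bigr)^{1/n}$ converges, with limit $\rho_\sigma\in(0,1)$. In particular $\limsup_{n\to\infty}a_n^{1/n}=\rho_\sigma$, so the Cauchy--Hadamard formula (equivalently, the Exponential Growth Formula, \cite[Theorem~IV.7]{FS}) shows that the radius of convergence of $P_\sigma(0,z)$ equals $\rho_\sigma^{-1}$, which is finite and positive. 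Consequently $P_\sigma(0,z)$ is analytic on the open disk $|z|<\rho_\sigma^{-1}$, and therefore has no singularity there; this is the second assertion of the theorem.

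For the first assertion, recall that a power series with finite radius of convergence $R$ must have at least one singularity on the circle $|z|=R$: were it analytic on a neighbourhood of that whole circle, it would extend analytically to a disk of radius exceeding $R$, contradicting the definition of $R$. Taking $R=\rho_\sigma^{-1}$, some singularity of $P_\sigma(0,z)$ lies on $|z|=\rho_\sigma^{-1}$; and since the coefficients $a_n=\alpha_n(\sigma)/n!$ are non-negative, Pringsheim's Theorem \cite[Theorem~IV.6]{FS} pins a singularity to the positive real axis, i.e.\ $z=\rho_\sigma^{-1}$ itself is a singularity. There is no genuine obstacle in this argument; the single point that deserves care is that it is the \emph{existence} of the limit in Proposition~\ref{prop:lim}, rather than merely an upper or lower estimate, that forces the radius of convergence to be exactly $\rho_\sigma^{-1}$, so that Pringsheim's Theorem applies precisely at that point.
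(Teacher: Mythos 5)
Your proof is correct and follows essentially the same route the paper sketches in the paragraph immediately preceding the theorem: use Proposition~\ref{prop:lim} to identify $\rho_\sigma^{-1}$ as the radius of convergence via the Exponential Growth Formula, then invoke Pringsheim's Theorem (non-negative coefficients) to place a singularity on the positive real axis at $z=\rho_\sigma^{-1}$. The only addition you make is to spell out explicitly the standard fact that a finite radius of convergence forces a boundary singularity, which is implicit in the paper's citation of \cite[Theorem~IV.7]{FS}.
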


It is also shown in~\cite{Eliasym} that if $m\ge3$, then $\rho_\sigma\ge\min\{\rho_{123},\rho_{132}\}=\rho_{132}$. In the rest of the paper, we let $C=\rho_{132}^{-1}\approx 1.276$.
\begin{proposition}[\cite{Eliasym}]\label{prop:boundC} For every $\sigma\in\S_m$ with $m\ge3$, $$1<\rho_\sigma^{-1}\le C.$$\end{proposition}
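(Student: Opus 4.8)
The plan is to obtain both bounds from facts already available for patterns of small length. The strict lower bound $\rho_\sigma^{-1}>1$ is immediate from Proposition~\ref{prop:lim}: for $m\ge3$ the limit $\rho_\sigma=\lim_{n\to\infty}(\alpha_n(\sigma)/n!)^{1/n}$ exists and lies strictly between $0$ and $1$, so its reciprocal exceeds $1$. Hence all the work is in the upper bound $\rho_\sigma^{-1}\le C$, equivalently $\rho_\sigma\ge\rho_{132}$.

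For the upper bound I would reduce to length-$3$ patterns. Since the entries of $\sigma$ are distinct, $\tau:=\red(\sigma_1\sigma_2\sigma_3)$ is a well-defined pattern in $\S_3$. If $\pi\in\S_n$ contains $\sigma$ at position $i$, that is $\red(\pi_i\cdots\pi_{i+m-1})=\sigma$, then the three entries $\pi_i\pi_{i+1}\pi_{i+2}$ have the same relative order as $\sigma_1\sigma_2\sigma_3$, so $\red(\pi_i\pi_{i+1}\pi_{i+2})=\tau$ and $\pi$ contains $\tau$ at position $i$. Contrapositively, every $\pi$ avoiding $\tau$ avoids $\sigma$, hence $\alpha_n(\sigma)\ge\alpha_n(\tau)$ for all $n$. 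Dividing by $n!$, taking $n$-th roots, and letting $n\to\infty$ — the limits exist for both $\sigma$ and $\tau$ by Proposition~\ref{prop:lim}, since $m\ge3$ and $3\ge3$ — yields $\rho_\sigma\ge\rho_\tau$.

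It remains to check $\rho_\tau\ge\rho_{132}$ for every $\tau\in\S_3$. Because reversal and complementation leave the sequence $\alpha_n(\cdot)$ unchanged, the six patterns of length $3$ form two classes: $\{123,321\}$, with growth rate $\rho_{123}$, and $\{132,213,231,312\}$, with growth rate $\rho_{132}$. Thus $\rho_\tau\in\{\rho_{123},\rho_{132}\}$, and in either case $\rho_\tau\ge\min\{\rho_{123},\rho_{132}\}=\rho_{132}$, the last equality being the $m=3$ instance of the CMP Conjecture proved in~\cite{EliNoy} (via the explicit generating functions for $\alpha_n(123)$ and $\alpha_n(132)$ and the singularity analysis recalled in Theorem~\ref{thm:sing}). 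Chaining the inequalities gives $\rho_\sigma\ge\rho_{132}$, i.e.\ $\rho_\sigma^{-1}\le\rho_{132}^{-1}=C$.

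Essentially all of this is bookkeeping. The only genuine content is the length-$3$ base case $\rho_{132}\le\rho_{123}$, which depends on the closed-form generating functions of~\cite{EliNoy}; the reduction from arbitrary $m$ to $m=3$ is a one-line containment argument, and the one technical point to keep in mind is to invoke Proposition~\ref{prop:lim} to guarantee that the growth-rate limits exist before comparing them. (As a shortcut, one may instead quote the inequality $\rho_\sigma\ge\rho_{132}$ directly from~\cite{Eliasym}, which is the source of the proposition.)
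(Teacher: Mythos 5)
Your reconstruction is correct. The paper does not actually prove Proposition~\ref{prop:boundC}; it imports it from~\cite{Eliasym}, and the surrounding text of the paper already records the key inequality $\rho_\sigma\ge\min\{\rho_{123},\rho_{132}\}=\rho_{132}$ as coming from that reference. Your two-step argument — strict lower bound from Proposition~\ref{prop:lim}, upper bound by reducing to $\tau=\red(\sigma_1\sigma_2\sigma_3)\in\S_3$ via the containment $\alpha_n(\tau)\le\alpha_n(\sigma)$ and then invoking the $m=3$ case of the CMP Conjecture from~\cite{EliNoy} — is sound, and the prefix-reduction step is in fact the same device the paper itself uses in Corollary~\ref{cor:c1} (there reducing to $\S_4$). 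So your proposal matches the expected line of reasoning and fills in a proof the paper only cites; no gaps.
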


Although we will not use it here, we remark that Ehrenborg, Kitaev and Perry~\cite{EKP} have given the following more accurate description of the asymptotic behavior of the sequences $\alpha_n(\sigma)$.
The proof of this important result relies on methods from spectral theory.

\begin{theorem}[\cite{EKP}]\label{thm:EKP}
For every $\sigma$,  $\alpha_n(\sigma)/n!=\gamma_\sigma\rho_\sigma^n+O(r_\sigma^n)$ for some constants $\gamma_\sigma$ and $r_\sigma<\rho_\sigma$.
\end{theorem}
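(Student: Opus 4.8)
The plan is to realize $\alpha_n(\sigma)/n!$ as an iterate of a positive compact operator and read off the asymptotics from its spectrum. Encode a uniformly random permutation of $\{1,\dots,n\}$ by $n$ independent uniform points $x_1,\dots,x_n\in[0,1]$, which are almost surely distinct; writing $x_i\cdots x_j$ for a block of consecutive points, one has $\alpha_n(\sigma)/n!=\int_{[0,1]^n}\prod_{i=1}^{n-m+1}\mathbf 1[\red(x_i\cdots x_{i+m-1})\neq\sigma]\,dx_1\cdots dx_n$. Processing the points from left to right and retaining the last $m-1$ of them as a ``state'' expresses this integral as $\langle\mathbf 1,\,T^{\,n-m+1}\mathbf 1\rangle$ in $L^2([0,1]^{m-1})$, where $T$ is the transfer operator $(Tf)(x_2,\dots,x_m)=\int_0^1\mathbf 1[\red(x_1,\dots,x_m)\neq\sigma]\,f(x_1,\dots,x_{m-1})\,dx_1$. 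A fixed power of $T$, say $S:=T^{m-1}$, integrates out all the old coordinates and hence has a genuine bounded $2(m-1)$-variable kernel, so $S$ is Hilbert--Schmidt and in particular compact; this is the only reason to pass to a power, and the bounded remainder coming from $n\bmod(m-1)$ is handled routinely at the end.

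I would then study the spectrum of $S$. Being compact, $S$ has nonzero spectrum consisting of finite-multiplicity eigenvalues accumulating only at $0$, so a second-largest modulus is well defined. Its kernel is nonnegative, so $S$ preserves the cone of nonnegative functions; the essential extra point is that $S$ is primitive, i.e.\ some power $S^k$ has an almost-everywhere strictly positive kernel. Concretely this says that between any two ``states'' (tuples of reals with prescribed relative order) there is a bounded-length interpolating sequence of reals, no length-$m$ window of which is an occurrence of $\sigma$. Granting primitivity, the Krein--Rutman / Jentzsch theorem yields that the spectral radius $\mu>0$ of $S$ is a simple eigenvalue, with a strictly positive eigenfunction $\varphi$ and strictly positive adjoint eigenfunction $\psi$, and that every other eigenvalue has modulus strictly below $\mu$; fix $r$ strictly between this second modulus and $\mu$.

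It remains to assemble the estimate. Write $S^q=\mu^q\Pi+R_q$, where $\Pi h=\langle\psi,h\rangle\,\varphi/\langle\psi,\varphi\rangle$ is the rank-one Riesz projection onto the leading eigenline and $\|R_q\|=O(r^q)$ (the rest of the spectrum has modulus at most $r$, and any polynomial factors from Jordan blocks off the dominant eigenvalue are absorbed into the gap between $r$ and the true second modulus). Inserting $n-m+1=(m-1)q+s$ into $\alpha_n(\sigma)/n!=\langle(T^{*})^{s}\mathbf 1,\,S^{q}\mathbf 1\rangle$, and noting that $T\varphi$ is again a $\mu$-eigenfunction of $S$, hence $T\varphi=\lambda\varphi$ for a scalar $\lambda=\mu^{1/(m-1)}>0$ by simplicity and positivity, one gets $\alpha_n(\sigma)/n!=\gamma_\sigma\lambda^{\,n}+O(r^{n/(m-1)})$ with $\gamma_\sigma=\lambda^{-(m-1)}\langle\mathbf 1,\varphi\rangle\langle\psi,\mathbf 1\rangle/\langle\psi,\varphi\rangle>0$. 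Finally $\lambda=\rho_\sigma$: by Proposition~\ref{prop:lim} and the Exponential Growth Formula (Theorem~\ref{thm:sing}) the left side has exponential growth rate $\rho_\sigma$, whereas $\gamma_\sigma\neq0$ forces the right side to grow at rate $\lambda$; then $r_\sigma:=r^{1/(m-1)}<\rho_\sigma$ gives the claimed bound. The genuinely nonroutine part is proving primitivity of $S$ uniformly in $\sigma\in\S_m$ --- in particular constructing the avoiding interpolations when $\sigma$ is monotone or nearly monotone, where one cannot simply splice in a long increasing run --- together with the measure-theoretic care needed to justify the operator representation and the ``almost surely distinct'' reduction. (An alternative, less operator-theoretic route uses the cluster method of Goulden and Jackson to write $P_\sigma(0,z)=1/\omega_\sigma(0,z)$ with $\omega_\sigma(0,z)$ entire and then applies meromorphic coefficient asymptotics, but showing that the smallest zero of $\omega_\sigma$ is simple and strictly dominant is of comparable difficulty.)
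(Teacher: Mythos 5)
The paper does not actually prove this theorem: it is quoted from Ehrenborg, Kitaev and Perry~\cite{EKP}, with only the one-line remark that ``the proof of this important result relies on methods from spectral theory.'' Your sketch is a faithful reconstruction of that spectral approach---the transfer operator $T$ on $L^2([0,1]^{m-1})$, passing to $S=T^{m-1}$ to obtain an honest Hilbert--Schmidt kernel, Jentzsch/Krein--Rutman giving an algebraically simple dominant eigenvalue with positive eigenfunctions and a spectral gap, and the commutation argument $T\varphi=\lambda\varphi$ to pull the eigenvalue back from $S$ to $T$---and you correctly isolate primitivity (a strictly positive power of the kernel, uniformly in $\sigma$) as the genuinely nontrivial input; the normalization-constant bookkeeping $\mu^q\lambda^s=\lambda^{n-m+1}$ and the resulting formula for $\gamma_\sigma$ check out. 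It is worth noting that the present paper, in its Final Remarks (Section~\ref{sec:final}), explicitly poses finding a \emph{non}-spectral proof of this theorem as an open problem and spells out the alternative route you mention only in passing: since $P_\sigma(0,z)=1/\omega_\sigma(z)$ with $\omega_\sigma$ analytic in $|z|\le C$ (Corollary~\ref{cor:nosing}), it would suffice by standard meromorphic singularity analysis to know that $z=\rho_\sigma^{-1}$ is a \emph{simple} zero of $\omega_\sigma$ and the \emph{only} one of minimum modulus. The paper proves simplicity ($\omega_\sigma'(\rho_\sigma^{-1})<0$, the Proposition closing Section~\ref{sec:final}) via the same alternating-series machinery it uses elsewhere, but uniqueness on the circle $|z|=\rho_\sigma^{-1}$ is left open---which is exactly why your parenthetical remark that the two routes are ``of comparable difficulty'' is on point, and why the theorem remains a citation rather than a proof in this paper.
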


\subsection{The cluster method}\label{sec:clustermethod}

The computation of the generating functions $P_{\sigma}(u,z)$ is simplified by using an adaptation of the cluster method of Goulden and Jackson~\cite{GJ79,GJ}, which is based on inclusion-exclusion.
We now summarize this adaptation to consecutive patterns in permutations, which has been recently used in~\cite{Dot,EliNoy2,KS}.

For fixed $\sigma\in\S_m$, a $k$-cluster of length $n$ with respect to $\sigma$ is a pair $(\pi;i_1,i_2,\dots,i_k)$ where the indices $i_j$ satisfy $1=i_1<i_2<\dots<i_k=n-m+1$ and $i_{j+1}\le i_j+m-1$ for all $j$,
and $\pi\in\S_n$ satisfies $\red(\pi_{i_j}\pi_{i_j+1}\dots\pi_{i_j+m-1})=\sigma$ for all $j$.
In other words, the $i_j$ are starting positions of occurrences of $\sigma$ in $\pi$, all the entries of $\pi$ belong to at least one of these marked occurrences, and neighboring marked occurrences overlap.
For example, if $\sigma=1324$, then $(142536879;1,3,6)$ is a $3$-cluster of length $9$.
Note that $i_{j+1}-i_j\in\O_\sigma$ (the set of overlaps) for all $j$, and that $\pi$ may have additional occurrences of $\sigma$ aside from the marked ones.

Let $r^\sigma_{n,k}$ denote the number of $k$-clusters of length $n$ with respect to $\sigma$. For example, $r^\sigma_{m,1}=1$ for any $\sigma\in\S_m$, and $r^{132}_{5,2}=3$ because of the clusters
$(13254;1,3)$, $(14253;1,3)$ and $(15243;1,3)$. More examples of $r^\sigma_{n,k}$ are given in Table~\ref{tab:r23}. Let
$$R_\sigma(t,z)=\sum_{n,k}r^\sigma_{n,k}t^k\frac{z^n}{n!}$$ be the exponential generating function for clusters. The cluster method \cite[Theorem 2.8.6]{GJ}, adapted to permutations,
can be stated as follows.

\begin{theorem}[\cite{GJ}]\label{thm:GJ} For every $\sigma\in\S_m$,
$$\omega_\sigma(u,z)=1-z-R_\sigma(u-1,z).$$
\end{theorem}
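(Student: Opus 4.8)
The plan is to establish the equivalent identity $P_\sigma(u,z)=\bigl(1-z-R_\sigma(u-1,z)\bigr)^{-1}$ by the inclusion--exclusion argument that underlies the cluster method, carried out in the exponential framework appropriate to permutations. First I would write $u=1+(u-1)$; expanding, for each $\pi\in\S_n$ one gets $u^{c_\sigma(\pi)}=\sum_S(u-1)^{|S|}$, where $S$ ranges over all subsets of the set of starting positions of occurrences of $\sigma$ in $\pi$. Substituting this into the definition of $P_\sigma(u,z)$ reinterprets it as the exponential generating function (by $z^n/n!$) of \emph{marked permutations} $(\pi,S)$ weighted by $(u-1)^{|S|}$, where $\pi\in\S_n$ and $S$ is an arbitrary set of occurrences of $\sigma$ in $\pi$; the occurrences in $S$ are thought of as ``marked,'' while $\pi$ may contain further, unmarked occurrences.

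The heart of the proof is a bijection between marked permutations and ordered sequences of \emph{atoms}, where each atom is either a single unmarked entry or a cluster in the sense defined above. Given $(\pi,S)$, list the marked occurrences in increasing order of starting position and break this list into maximal runs of consecutive occurrences in which each occurrence overlaps the next (equivalently, consecutive starting positions differ by at most $m-1$); each run, restricted to the interval of positions it covers and standardized, is a cluster, and each position of $\pi$ lying in no marked occurrence is a single-entry atom. Reading the atoms from left to right gives the sequence. Conversely, from a sequence of atoms one concatenates them---choosing how the values are distributed among the atoms---and marks the occurrences of $\sigma$ contributed by the cluster atoms; since $\red$ of a subsequence depends only on its relative order, these really are occurrences in the assembled permutation. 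The step requiring care, and the main obstacle, is checking that these two maps are mutually inverse: crucially, two adjacent cluster atoms are never merged by mistake, because the last marked occurrence of a cluster ends at the cluster's final position and so cannot overlap the first marked occurrence of the following atom; and within a cluster the covered positions form a gap-free interval, so no unmarked entry can hide inside one. Thus the run structure of the marked occurrences recovers the cluster atoms exactly, even though concatenation may create new occurrences of $\sigma$ at the boundaries---these are harmless, being necessarily unmarked.

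It remains to pass to generating functions. In the exponential formalism, assembling structures on disjoint linearly ordered label sets multiplies their exponential generating functions---the multinomial coefficient counting the ways to distribute the values is precisely what the product records---so a sequence of $\ell$ atoms contributes the $\ell$-th power of the atom generating function. A single unmarked entry contributes $z$, while a cluster of length $n$ carrying $k$ marked occurrences contributes $(u-1)^k z^n/n!$, so the generating function for clusters with this weight is $R_\sigma(t,z)$ with $t=u-1$, that is $R_\sigma(u-1,z)$; since $m\ge2$ this series has no constant term, so $z+R_\sigma(u-1,z)$ has none either and the geometric series below is a well-defined formal power series. Summing over $\ell\ge0$,
\[
P_\sigma(u,z)=\sum_{\ell\ge0}\bigl(z+R_\sigma(u-1,z)\bigr)^{\ell}=\frac{1}{1-z-R_\sigma(u-1,z)},
\]
and taking reciprocals (the right-hand side has constant term $1$) gives $\omega_\sigma(u,z)=1-z-R_\sigma(u-1,z)$.
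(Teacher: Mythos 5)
The paper cites this theorem from Goulden and Jackson~\cite{GJ} without supplying a proof, so there is no in-paper argument to compare against; your write-up is a correct and complete account of the standard cluster-method derivation. All the essential points are there: the binomial re-expansion of $u^{c_\sigma(\pi)}$ that introduces marked occurrences, the decomposition of a marked permutation into an ordered sequence of singleton and cluster atoms, the verification that the two maps are mutually inverse (in particular that adjacent cluster atoms cannot merge, since the boundary conditions $i_1=1$ and $i_k=n-m+1$ in the cluster definition force the first and last marked occurrences to reach the ends of the interval, and that new occurrences arising from concatenation are necessarily unmarked), and the passage to the labeled-product formula giving the geometric series.
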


Because of the above theorem, finding the generating function $P_{\sigma}(u,z)$ for occurrences of $\sigma$ in permutations is equivalent to computing the cluster numbers $r^\sigma_{n,k}$.
The advantage of these numbers is that they can be interpreted as counting linear extensions of certain posets, as shown in~\cite{EliNoy2}.
Given $\sigma\in\S_m$ and $k$, let
$$\I^\sigma_{k}=\{(i_1,i_2,\dots,i_k) : i_1=1 \mbox{ and } i_{j+1}-i_j\in\O_\sigma \mbox{ for }1\le j\le k-1\}$$
be the set of possible tuples of starting positions of marked occurrences of $\sigma$ in $k$-clusters.
If $(i_1,\dots,i_k)\in\I^\sigma_{k}$, then
$(\pi;i_1,\dots,i_k)$ is a $k$-cluster with respect to $\sigma$ if and only if
$\pi\in\S_{i_k+m-1}$ and, for each $1\le j\le k$,
\beq\label{eq:occurrence}\red(\pi_{i_j}\pi_{i_j+1}\dots\pi_{i_j+m-1})=\sigma.\eeq
Denoting by $\varsigma\in\S_m$ the inverse of $\sigma$, condition~\eqref{eq:occurrence} is equivalent to
\beq\label{eq:occurrence2}\pi_{\varsigma_1+i_j-1}<\pi_{\varsigma_2+i_j-1}<\dots<\pi_{\varsigma_m+i_j-1}.\eeq
The inequalities~\eqref{eq:occurrence2} for $1\le j\le k$ define a partial order on the set $\{\pi_1,\pi_2,\dots,\pi_{i_k+m-1}\}$.
This partially ordered set (poset) is denoted by $Q^\sigma_{i_1,\dots,i_k}$ and called a {\em cluster poset}.
Denoting by $\lext(Q)$ the set of linear extensions (i.e., compatible linear orders) of a poset $Q$, it is clear that
$(\pi;i_1,\dots,i_k)$ is a $k$-cluster with respect to $\sigma$ if and only if $\pi\in\lext(Q^\sigma_{i_1,\dots,i_k})$.
To specify the length $n$ of the cluster, we let $\I^\sigma_{n,k}=\{(i_1,\dots,i_k)\in\I^\sigma_{k}:i_k=n-m+1\}$, so that
\beq\label{eq:rnk}r^\sigma_{n,k}=\sum_{(i_1,\dots,i_k)\in\I^\sigma_{n,k}}\lext(Q^\sigma_{i_1,\dots,i_k}).\eeq

For example, if $\sigma=14253$, then $\O_\sigma=\{2,4\}$, so $(1,3,7)\in\I^\sigma_3$. In this case, $(\pi;1,3,7)$ is a $3$-cluster
if $\pi\in\S_{11}$ and the following inequalities hold:
$$\pi_1<\pi_3<\pi_5<\pi_2<\pi_4, \quad \pi_3<\pi_5<\pi_7<\pi_4<\pi_6, \quad \pi_7<\pi_9<\pi_{11}<\pi_8<\pi_{10}.$$
Equivalently, $\pi$ is a linear extension of the poset $Q^{\sigma}_{1,3,7}$ drawn on the left of Figure~\ref{fig:lext}. An example of a linear extension is given on the right,
corresponding to $\pi=1\ 6\ 2\ 8\ 3\ 11 \ 4\ 9\ 5\ 10\ 7$.

\begin{figure}[htb]
\centering
\begin{tikzpicture}[scale=.6]
\fill (1,0) circle (0.1) node[right]{$\pi_1$};
\fill (1,1) circle (0.1) node[right]{$\pi_3$};
\fill (1,2) circle (0.1) node[right]{$\pi_5$};
\fill (0,3) circle (0.1) node[left]{$\pi_2$};
\fill (2,3) circle (0.1) node[right]{$\pi_7$};
\fill (1,4) circle (0.1) node[right]{$\pi_4$};
\fill (1,5) circle (0.1) node[right]{$\pi_6$};
\fill (3,4) circle (0.1) node[right]{$\pi_9$};
\fill (3,5) circle (0.1) node[right]{$\pi_{11}$};
\fill (3,6) circle (0.1) node[right]{$\pi_8$};
\fill (3,7) circle (0.1) node[right]{$\pi_{10}$};
\draw (1,0)--(1,2)--(0,3)--(1,4)--(1,5);
\draw (1,2)--(3,4)--(3,7);
\draw (2,3)--(1,4);
\end{tikzpicture}\hspace{15mm}
\begin{tikzpicture}[scale=.6]
\fill (1,0) circle (0.1) node[right]{$1$};
\fill (1,1) circle (0.1) node[right]{$2$};
\fill (1,2) circle (0.1) node[right]{$3$};
\fill (0,3) circle (0.1) node[left]{$6$};
\fill (2,3) circle (0.1) node[right]{$4$};
\fill (1,4) circle (0.1) node[left]{$8$};
\fill (1,5) circle (0.1) node[left]{$11$};
\fill (3,4) circle (0.1) node[right]{$5$};
\fill (3,5) circle (0.1) node[right]{$7$};
\fill (3,6) circle (0.1) node[right]{$9$};
\fill (3,7) circle (0.1) node[right]{$10$};
\draw (1,0)--(1,2)--(0,3)--(1,4)--(1,5);
\draw (1,2)--(3,4)--(3,7);
\draw (2,3)--(1,4);
\end{tikzpicture}
\caption{\label{fig:lext} The poset $Q^{14253}_{1,3,7}$ and a linear extension.}
\end{figure}
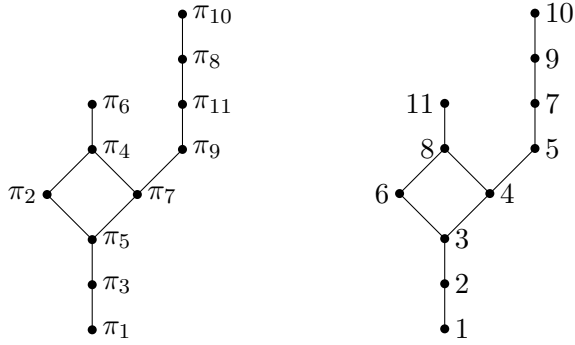

\section{The most avoided pattern}\label{sec:most}

In this section we prove the CMP conjecture, which is stated as Theorem~\ref{thm:main} below. The proof involves a detailed analysis of the functions $\omega_\sigma(z)$ which, by Theorem~\ref{thm:GJ},
are closely related to the exponential generating functions for clusters. For the monotone pattern, we have a simple alternating series expansion.
\begin{proposition}[\cite{GJ,EliNoy2}] We have
$$\omega_{12\dots m}(z)=\sum_{j\ge0}\frac{z^{jm}}{(jm)!}-\sum_{j\ge0}\frac{z^{jm+1}}{(jm+1)!}.$$
\end{proposition}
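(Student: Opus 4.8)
The plan is to apply the cluster method of Theorem~\ref{thm:GJ} with $u=0$, which gives $\omega_{12\dots m}(z)=1-z-R_{12\dots m}(-1,z)$, and then to evaluate $R_{12\dots m}(-1,z)$ explicitly by means of the linear-extension description \eqref{eq:rnk} of the cluster numbers $r^{12\dots m}_{n,k}$.

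The first key step is to observe that for the increasing pattern every cluster poset is a chain. Since $\O_{12\dots m}=\{1,2,\dots,m-1\}$, in any $k$-cluster $(\pi;i_1,\dots,i_k)$ consecutive marked occurrences overlap in at least one position; each window $\red(\pi_{i_j}\cdots\pi_{i_j+m-1})=12\dots m$ is increasing, and the windows jointly cover the whole interval $[1,i_k+m-1]$ with no gaps, so chaining the inequalities forces $\pi$ to be the increasing permutation. Hence $|\lext(Q^{12\dots m}_{i_1,\dots,i_k})|=1$ for every tuple, and by \eqref{eq:rnk} the number $r^{12\dots m}_{n,k}$ equals $|\I^{12\dots m}_{n,k}|$, i.e.\ the number of compositions of $n-m$ into $k-1$ parts $a_j=i_{j+1}-i_j$, each lying in $\{1,2,\dots,m-1\}$.

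Next I would substitute this into $R_{12\dots m}(-1,z)=\sum_{n,k}r^{12\dots m}_{n,k}(-1)^k z^n/n!$ and group the terms by $n$. Writing $j=k-1$ for the number of parts, the inner signed sum becomes $\sum_{j\ge0}(-1)^j$ times the number of compositions of $n-m$ into $j$ parts from $\{1,\dots,m-1\}$, which is precisely $[x^{n-m}]\sum_{j\ge0}\bigl(-(x+x^2+\dots+x^{m-1})\bigr)^j=[x^{n-m}]\frac{1}{1+x+x^2+\dots+x^{m-1}}$. Using $\frac{1}{1+x+\dots+x^{m-1}}=\frac{1-x}{1-x^m}=\sum_{i\ge0}\bigl(x^{im}-x^{im+1}\bigr)$, this coefficient equals $1$ when $n\equiv0\pmod m$, equals $-1$ when $n\equiv1\pmod m$, and vanishes otherwise (the two residues are distinct since $m\ge2$, so nothing cancels). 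Consequently $-R_{12\dots m}(-1,z)=\sum_{n\ge m,\,n\equiv0}z^n/n!-\sum_{n\ge m,\,n\equiv1}z^n/n!$, and adding back $1-z=z^0/0!-z^1/1!$ supplies exactly the two missing terms $n=0$ and $n=1$, yielding the claimed identity.

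I expect the conceptual crux to be the rigidity observation of the second paragraph --- that overlapping monotone windows collapse a cluster to a single linear extension --- after which only formal power-series manipulation remains. The one place that needs a little care is the bookkeeping of small indices $n<m$, where there are no clusters at all; in particular one should check that the $1-z$ term accounts precisely for $n=0$ and $n=1$, with everything else being routine.
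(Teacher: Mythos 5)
Your proof is correct. The paper cites this identity from \cite{GJ,EliNoy2} without reproducing a proof, and your cluster-method derivation — observing that for the increasing pattern every cluster poset $Q^{12\dots m}_{i_1,\dots,i_k}$ is a chain (so $r^{12\dots m}_{n,k}$ simply counts compositions of $n-m$ into $k-1$ parts from $\{1,\dots,m-1\}$), then telescoping via $\frac{1}{1+x+\dots+x^{m-1}}=\frac{1-x}{1-x^m}$ — is exactly the standard argument from those sources, and the chain observation is the same one the paper uses in the monotone case of the proof of Proposition~\ref{prop:decreasing}. The only nit is a temporarily dropped sign when you rewrite the inner sum with $j=k-1$ (since $(-1)^k=-(-1)^j$), but you silently recover it at ``$-R_{12\dots m}(-1,z)=\cdots$,'' so the final identity is right.
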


Since the terms of the above alternating series are decreasing in absolute value when $0<z\le C$, we get the following upper bound.

\begin{proposition}\label{prop:ineqmon}
For $0<z\le C$,
$$\omega_{12\dots m}(z)<1-z+\frac{z^m}{m!}-\frac{z^{m+1}}{(m+1)!}+\frac{z^{2m}}{(2m)!}.$$
\end{proposition}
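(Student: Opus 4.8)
The plan is to establish this as a truncation bound on the alternating series for $\omega_{12\dots m}(z)$ given in the preceding proposition. Writing
$$\omega_{12\dots m}(z)=\sum_{j\ge0}\left(\frac{z^{jm}}{(jm)!}-\frac{z^{jm+1}}{(jm+1)!}\right),$$
the first step is to verify that the terms $a_j:=\dfrac{z^{jm}}{(jm)!}$ and $b_j:=\dfrac{z^{jm+1}}{(jm+1)!}$ are each strictly decreasing in $j$ for $0<z\le C$, and moreover that the \emph{combined} sequence obtained by interleaving (i.e. $a_0,b_0,a_1,b_1,\dots$, read with alternating signs) is strictly decreasing in absolute value. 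The ratio of consecutive terms of this interleaved sequence is, at each step, a factor of either $z/(jm+1)$ (going from $a_j$ to $b_j$) or $z^{m-1}/\big((jm+2)(jm+3)\cdots((j{+}1)m+1)\big)$ (going from $b_j$ to $a_{j+1}$); since $z\le C<2$ and $m\ge 2$, both factors are less than $1$ already for $j=0$, and they only shrink as $j$ grows. This shows the series is a genuine alternating series with strictly decreasing terms in the relevant range of $z$.

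The second step applies the standard alternating-series estimate: truncating after a negative term gives an upper bound, and truncating after a positive term gives a lower bound. We want an upper bound, so I would truncate right after the term $+\dfrac{z^{2m}}{(2m)!}=a_2$, which is positive — wait, that would give a lower bound. To get an \emph{upper} bound we must stop after a \emph{negative} term; so instead the bound
$$\omega_{12\dots m}(z)<\big(a_0-b_0\big)+\big(a_1-b_1\big)+a_2 = 1-z+\frac{z^m}{m!}-\frac{z^{m+1}}{(m+1)!}+\frac{z^{2m}}{(2m)!}$$
follows from the observation that the remaining tail $\sum_{j\ge 2}\big(a_j-b_j\big)-a_2 = -b_2 + \sum_{j\ge 3}\big(a_j-b_j\big) - (a_2 - a_2)$ is negative. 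More cleanly: grouping the tail from $b_2$ onward as $-(b_2-a_3)-(b_3-a_4)-\cdots$, each parenthesized difference is positive by the monotonicity established in step one (here one uses $b_j>a_{j+1}$, equivalently $z^{m-1}<(jm+2)\cdots((j{+}1)m+1)$, valid for $z\le C$), so the whole tail is $\le -b_2<0$, giving the strict inequality.

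The main obstacle — really the only thing requiring care — is verifying the monotonicity inequalities $a_j>b_j$ and $b_j>a_{j+1}$ uniformly for $0<z\le C\approx 1.276$ and all $m\ge 2$, $j\ge 0$. The worst case is the smallest denominators, i.e. $j=0$: one needs $z<1$ to fail?—no, $a_0=1>z=b_0$ requires $z<1$, which does \emph{not} hold on all of $(0,C]$. This is the subtlety: for $z\in[1,C]$ the very first comparison $a_0>b_0$ fails, so the naive alternating-series argument breaks at the first term. The fix is that we don't need the first comparison: the claimed inequality $\omega<1-z+\cdots$ already \emph{includes} the terms $a_0-b_0$ and $a_1-b_1$ explicitly and only truncates the tail starting at $b_2$, whose denominators $(2m)!$ and larger are big enough that $z^{m-1}\le C^{m-1}$ is dominated with room to spare. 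So the real content is the single uniform estimate that for $m\ge 2$, the sequence $\big(a_j-b_j\big)_{j\ge 2}$ together with the sign pattern makes $\sum_{j\ge2}(a_j-b_j)\le a_2 - b_2 + a_3 < a_2$; this reduces to $C^{m-1} < (2m+2)(2m+3)\cdots(3m+1)$ and the analogous inequalities for larger $j$, all of which are immediate since the right-hand side exceeds $2^{m}\cdot m!$-type quantities while $C^{m-1}<2^{m-1}$.
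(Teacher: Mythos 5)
Your proof is correct and uses the same mechanism as the paper's one-line justification: truncate the interleaved alternating series and show the omitted tail is negative. You rightly flag a subtlety that the paper's phrasing glosses over—the terms do \emph{not} all decrease in absolute value on $(0,C]$, since $b_0=z$ may exceed $a_0=1$—and you correctly observe that only $b_j>a_{j+1}$ for $j\ge 2$ is actually needed, which holds comfortably because $z^{m-1}\le C^{m-1}$ is dwarfed by $(jm+2)(jm+3)\cdots((j+1)m)$. Two small slips in your exposition, neither of which damages the argument: you state the alternating-series truncation rule backwards (for a genuinely decreasing alternating series, a partial sum ending in a \emph{positive} term is an upper bound, not a lower one), and the assertion that the tail is $\le -b_2$ should read $\ge -b_2$ yet still $<0$. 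What actually carries the proof is your direct grouping $-(b_2-a_3)-(b_3-a_4)-\cdots$ with each parenthesized difference positive, which is sound.
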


For an arbitrary pattern, the generating function $\omega_\sigma(z)$ can also be expressed as an alternating sum, although the coefficients are not as simple as for the monotone pattern. The trick is to write
$$R_\sigma(t,z)=\sum_{n,k}r^\sigma_{n,k}t^k\frac{z^n}{n!}=\sum_{k\ge1}\left(\sum_n r^\sigma_{n,k}\frac{z^n}{n!}\right)t^k=\sum_{k\ge1}s^\sigma_k(z)t^k,$$
where we define $$s^\sigma_k(z)=\sum_n r^\sigma_{n,k}\frac{z^n}{n!}.$$
In particular,
\beq\label{eq:s12} s^\sigma_1(z)=\frac{z^m}{m!} \qquad \mbox{and} \qquad s^\sigma_2(z)=\sum_{\ell\in\O_\sigma} r^\sigma_{m+\ell,2}\frac{z^{m+\ell}}{(m+\ell)!},\eeq
since all $2$-clusters are of the form $(\pi;1,\ell+1)$ with $\ell\in\O_\sigma$.
 By Theorem~\ref{thm:GJ}, \beq\label{eq:omegaalt}\omega_\sigma(z)=1-z-R_\sigma(-1,z)=1-z-\sum_{k\ge1}s^\sigma_k(z)(-1)^k,\eeq
which has the advantage of being an alternating sum. To obtain bounds for $\omega_\sigma(z)$ similar to Proposition~\ref{prop:ineqmon},
we will show that the terms of this sum decrease in absolute value. First we state an easy lemma that will be used in the proof.

\begin{lemma}\label{lem:m4}
If $\sigma\in\S_m\setminus\{12\dots m,m\dots 21\}$ and $2,3\in\O_\sigma$, then $m=4$.
\end{lemma}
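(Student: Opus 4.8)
The plan is to translate the two overlap conditions into periodicity statements about the descent pattern of $\sigma$ and then collapse them by a $\gcd$ argument of Fine--Wilf type. Recall that $i\in\O_\sigma$ means exactly that the words $\sigma_1\sigma_2\cdots\sigma_{m-i}$ and $\sigma_{i+1}\sigma_{i+2}\cdots\sigma_m$ are order-isomorphic; in particular, for every pair of positions $p<q$ with $q\le m-i$ one has $\sigma_p<\sigma_q\iff\sigma_{p+i}<\sigma_{q+i}$. Writing $a_p=1$ if $\sigma_p<\sigma_{p+1}$ and $a_p=0$ otherwise, for $1\le p\le m-1$, and specialising the above to $q=p+1$, this gives $a_p=a_{p+i}$ for all $p$ with $p\le m-i-1$.

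First I would apply this with $i=2$ and with $i=3$, obtaining $a_p=a_{p+2}$ for $p\le m-3$ and $a_p=a_{p+3}$ for $p\le m-4$. Suppose now that $m\ge5$. Then $a_1,a_2,a_3,a_4$ are all defined, and the relations $a_1=a_3$ and $a_2=a_4$ (taking $i=2$, $p=1,2$) together with $a_1=a_4$ (taking $i=3$, $p=1$) force $a_1=a_2=a_3=a_4$. Since $a_p=a_{p+2}$ holds for every $p\le m-3$, a one-line induction then propagates this to $a_1=a_2=\cdots=a_{m-1}$. Hence $\sigma$ is increasing at every step, or decreasing at every step, and, being a permutation of $\{1,\dots,m\}$, it must be $12\cdots m$ or $m\cdots21$, contradicting the hypothesis. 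Therefore $m\le4$; since $3\in\O_\sigma\subseteq\{1,\dots,m-1\}$ forces $m\ge4$, we conclude $m=4$.

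I do not anticipate a real obstacle, as the argument is elementary; the only delicate point is keeping track of the index ranges, because the collapse of $a_1,\dots,a_4$ needs $m\ge5$ rather than merely $m\ge4$. This threshold is genuinely sharp: the pattern $\sigma=2143$ has $\O_\sigma=\{2,3\}$, is not monotone, and has $m=4$. Equivalently, and perhaps more transparently, the subset $\{p:a_p=a_1\}$ of $\{1,\dots,m-1\}$ is stable under $p\mapsto p\pm2$ everywhere and under $p\mapsto p\pm3$ near its left end once $m\ge5$, and since $\gcd(2,3)=1$ it must then be all of $\{1,\dots,m-1\}$.
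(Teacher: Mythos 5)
Your proof is correct and takes essentially the same route as the paper's: both arguments translate $2,3\in\O_\sigma$ into equalities among consecutive descent indicators and deduce that the full descent sequence of $\sigma$ would be constant for $m\ge5$, forcing $\sigma$ to be monotone. The paper presents this more concretely by fixing $\sigma_1<\sigma_2$, splitting on the sign of $\sigma_2-\sigma_3$, and deriving the contradiction $\sigma_4>\sigma_5$ and $\sigma_4<\sigma_5$; your systematic $a_p$ bookkeeping packages the same comparisons with the index ranges made explicit, which is a helpful clarification.
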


\begin{proof}
The fact that $3\in\O_\sigma$ implies that $m\ge4$. Suppose that $m\ge 5$.
 Without loss of generality, we can assume that $\sigma_1<\sigma_2$.
If $\sigma_2<\sigma_3$, then the fact that $2\in\O_\sigma$ would imply that $\sigma=12\dots m$, so we must have $\sigma_2>\sigma_3$.
Since $2\in\O_\sigma$, it follows that $\sigma_4>\sigma_5$, but since $\sigma_1<\sigma_2$ and $3\in\O_\sigma$, we also must have $\sigma_4<\sigma_5$, which is a contradiction.
\end{proof}

\begin{proposition}\label{prop:decreasing}
For every $\sigma\in\S_m$ and $0<z\le C$, the sequence $$\{s^\sigma_k(z)\}_{k\ge1}$$ is decreasing.
\end{proposition}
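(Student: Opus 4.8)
The plan is to bound $s^\sigma_{k+1}(z)$ in terms of $s^\sigma_k(z)$ by exhibiting, for each $(k+1)$-cluster, a way to ``chop off'' its last marked occurrence to produce a $k$-cluster, and then controlling how many entries and linear extensions are lost in that operation. Concretely, given $(i_1,\dots,i_{k+1})\in\I^\sigma_{k+1}$, the truncation $(i_1,\dots,i_k)$ lies in $\I^\sigma_k$, and there is a natural map from $\lext(Q^\sigma_{i_1,\dots,i_{k+1}})$ to $\lext(Q^\sigma_{i_1,\dots,i_k})$ obtained by forgetting the entries $\pi_j$ with $j>i_k+m-1$ (of which there are $i_{k+1}-i_k=\ell_k\in\O_\sigma$ many, so between $1$ and $m-1$) and standardizing. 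Summing \eqref{eq:rnk} over all extensions of a fixed length-$(n-\ell)$ base cluster to length $n$, and using that inserting $\ell$ new entries into a permutation of length $n-\ell$ in all $\binom{n}{\ell}$ positions overcounts, one gets an inequality of the shape $r^\sigma_{n,k+1}\le \sum_{\ell\in\O_\sigma} \binom{n}{\ell}\, (\text{number of ways to complete})\cdot r^\sigma_{n-\ell,k}$, which after dividing by $n!$ and summing over $n$ translates into $s^\sigma_{k+1}(z)\le \big(\sum_{\ell\in\O_\sigma} c_\ell z^\ell/\ell!\big)\, s^\sigma_k(z)$ for suitable constants $c_\ell$ counting the admissible ``new block'' patterns.

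The crux is then to show that the multiplier $\Phi_\sigma(z):=\sum_{\ell\in\O_\sigma} c_\ell z^\ell/\ell!$ is at most $1$ for $0<z\le C\approx1.276$. Since $C<2$, the dominant contribution as $m$ grows comes from small $\ell$, and here the structure of $\O_\sigma$ is decisive: for a non-monotone, non-self-overlapping-at-distance-$1$ pattern we have $1\notin\O_\sigma$, so the smallest possible overlap is $\ell=2$, contributing at most a term of order $z^2/2\le C^2/2\approx 0.814$; the next terms $\ell=3,4,\dots$ contribute $z^3/6, z^4/24,\dots$ which are tiny, and one needs the combinatorial coefficients $c_\ell$ (essentially counting how a length-$\ell$ ``overhang'' can be completed to a marked occurrence of $\sigma$ consistent with the overlap) to not blow these up past the $1$ threshold. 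This is exactly where Lemma~\ref{lem:m4} enters: it says that for $m\ge5$ one cannot have both $2\in\O_\sigma$ and $3\in\O_\sigma$ (unless $\sigma$ is monotone, handled separately), so at most one of the two leading terms is present, which comfortably keeps $\Phi_\sigma(C)<1$. The genuinely small cases $m=3$ and $m=4$ (where more overlaps can coexist) will be checked by hand or by direct inspection of the finitely many patterns, and the monotone patterns $12\dots m$ and $m\dots21$ are dispatched using the explicit formula for $\omega_{12\dots m}$ and the fact that $1\in\O_\sigma$ only contributes the alternating monotone series whose terms are already known to be decreasing on $(0,C]$.

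The main obstacle I anticipate is making the ``chop off the last block'' map precise enough to get a clean, valid inequality rather than a heuristic one: one must be careful that (i) the map on linear extensions is well-defined (the truncated poset really is $Q^\sigma_{i_1,\dots,i_k}$, which follows because the defining inequalities \eqref{eq:occurrence2} for $j\le k$ only involve entries with index $\le i_k+m-1$), and (ii) the fiber over a given $k$-cluster is controlled — it is bounded by the number of ways to choose which $\ell$ of the $n$ positions receive the new entries times the number of linear extensions of the ``new'' sub-poset on those $\ell$ entries together with the at most $m-\ell$ entries of the previous block they interact with. Getting the bookkeeping of this fiber tight enough that the resulting $c_\ell$ are small constants (not growing with $n$, and not growing too fast with $m$) is the delicate part; I expect the right move is to bound the fiber by $\binom{n}{\ell}$ times a quantity depending only on $\ell$ and on how the $\ell$-overlap pattern sits inside $\sigma$, and then to observe that summing these over $\ell\in\O_\sigma$ and evaluating at $z=C$ stays below $1$ precisely because the $\ell=1$ term is absent for non-monotone patterns and the $\ell=2,3$ terms can't both appear for $m\ge5$.
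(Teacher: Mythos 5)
Your proposal is, in substance, the same argument the paper gives: the truncation map from $(k{+}1)$-clusters to $k$-clusters, a binomial bound on its fibers, the resulting inequality $s^\sigma_{k+1}(z)\le s^\sigma_k(z)\sum_{\ell\in\O_\sigma}z^\ell/\ell!$, and Lemma~\ref{lem:m4} to rule out $\{2,3\}\subseteq\O_\sigma$ when $m\ge5$. One small simplification: your multiplicative factor ``(number of ways to complete)'' and the coefficients $c_\ell$ are in fact just $1$ — once you fix which $\ell$ of the values are assigned to the new positions, the relative order of all entries is forced by the $k$-cluster together with the pattern $\sigma$, so the fiber bound is simply $\binom{n+\ell}{\ell}$ with no extra factor.

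There is, however, a genuine gap in your handling of the monotone patterns. You propose to dispatch $12\dots m$ via the explicit formula $\omega_{12\dots m}(z)=\sum_j z^{jm}/(jm)!-\sum_j z^{jm+1}/(jm+1)!$, claiming its terms ``are already known to be decreasing.'' But that series is a \emph{different} alternating decomposition of $\omega$ than $1-z+s_1-s_2+s_3-\cdots$: for the monotone pattern $s_1=z^m/m!$ but $s_2=\sum_{\ell=1}^{m-1}z^{m+\ell}/(m+\ell)!$, which is not the single term $z^{m+1}/(m+1)!$ appearing in the explicit formula. So the decreasingness of the terms of that formula says nothing about whether the $s_k$ decrease, which is the actual statement to prove. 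Worse, for the monotone pattern the generic multiplier diverges as $m\to\infty$: $\sum_{\ell=1}^{m-1}C^\ell/\ell!\to e^C-1\approx 2.6>1$, so the crude fiber bound $\binom{n+\ell}{\ell}$ cannot work. What closes the monotone case is the observation that the cluster posets $Q^\sigma_{i_1,\dots,i_k}$ are chains, so $\lext=1$ for every $k$, and the ``lost factorial'' estimate becomes an \emph{equality}
$$\lext(Q_{k+1})\frac{z^{n+\ell}}{(n+\ell)!}=\lext(Q_k)\frac{z^{n}}{n!}\cdot\frac{z^\ell}{(n+\ell)_\ell}\le\lext(Q_k)\frac{z^{n}}{n!}\cdot\frac{z^\ell}{(m+\ell)_\ell},$$
after which $\sum_{\ell=1}^{m-1}C^\ell/(m+\ell)_\ell<C/(m+1-C)<1$. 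A similar remark applies to your $m=4$ case: for the four non-monotone patterns with $\O_\sigma=\{2,3\}$, the crude multiplier $C^2/2+C^3/6\approx 1.16$ exceeds $1$, so ``direct inspection'' must actually produce an improved fiber bound $h^\sigma_\ell$ for at least one of $\ell\in\{2,3\}$ per pattern, as the paper does; without that sharpening the decreasingness does not follow.
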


\begin{proof}
From the definition of $s^\sigma_k(z)$ and equation~\eqref{eq:rnk}, we have
\beq\label{eq:sk}s^\sigma_k(z)=\sum_n r^\sigma_{n,k}\frac{z^n}{n!}=\sum_{(i_1,i_2,\dots,i_k)\in\I^\sigma_{k}}\lext(Q^\sigma_{i_1,i_2,\dots,i_k})\frac{z^{i_k+m-1}}{(i_k+m-1)!}.\eeq
To compare $s^\sigma_{k+1}(z)$ and $s^\sigma_k(z)$, we use a natural surjective map, which we denote by $\Gamma$, from $k{+}1$-clusters to $k$-clusters.
This map consists of deleting the part of the permutation to the right of the $k$-th marked occurrence,
namely $$\Gamma:(\pi_1\pi_2\dots\pi_{i_{k+1}+m-1};i_1,\dots,i_k,i_{k+1})\mapsto(\red(\pi_1\pi_2\dots\pi_{i_{k}+m-1});i_1,\dots,i_k).$$

Fix a $k$-cluster $(\pi_1\pi_2\dots\pi_n;i_1,\dots,i_k)$, where we let $n=i_{k}+m-1$, and fix $\ell\in\O_\sigma$.
The number of $k{+}1$-clusters of length $n+\ell$ that are mapped by $\Gamma$ to the fixed $k$-cluster is clearly bounded from above by $\binom{n+\ell}{\ell}$, since
such a $k{+}1$-cluster is uniquely determined by choosing the subset of $\{1,2,\dots,n+\ell\}$ corresponding to the values of the entries
$\{\pi_{n+1},\dots,\pi_{n+\ell}\}$. In fact, although not used in this proof, this bound can be improved to $\binom{n-m+2\ell}{\ell}$,
since the order of the entries $\{\pi_{n+1},\dots,\pi_{n+\ell}\}$ needs to be determined only in relation to the entries $\{\pi_{1},\pi_{2},\dots,\pi_{n-m+\ell}\}$.
In other words, each linear extension of $Q^\sigma_{i_1,\dots,i_k}$ can be {\it extended} in at most $\binom{n-m+2\ell}{\ell}$ ways to a linear extension of $Q^\sigma_{i_1,\dots,i_k,i_k+\ell}$, so
\beq\label{eq:n+l}\lext(Q^\sigma_{i_1,\dots,i_k,i_k+\ell})
\le\binom{n-m+2\ell}{\ell}\lext(Q^\sigma_{i_1,\dots,i_k})
\le\binom{n+\ell}{\ell}\lext(Q^\sigma_{i_1,\dots,i_k}).\eeq
Since the inequality
$$\lext(Q^\sigma_{i_1,\dots,i_k,i_k+\ell})\frac{z^{n+\ell}}{(n+\ell)!}\le\lext(Q^\sigma_{i_1,\dots,i_k})\frac{z^n}{n!} \frac{z^\ell}{\ell!}$$
holds for every $\ell\in\O_\sigma$, we get
$$\sum_{\ell\in\O_\sigma}\lext(Q^\sigma_{i_1,\dots,i_k,i_k+\ell})\frac{z^{n+\ell}}{(n+\ell)!}
\le\lext(Q^\sigma_{i_1,\dots,i_k})\frac{z^n}{n!} \sum_{\ell\in\O_\sigma} \frac{z^\ell}{\ell!}.$$
Summing both sides of the last inequality over all $(i_1,\dots,i_k)\in\I^\sigma_k$ and using~\eqref{eq:sk}, we get
\beq\label{eq:skcompare} s^\sigma_{k+1}(z)\le s^\sigma_k(z) \sum_{\ell\in\O_\sigma} \frac{z^\ell}{\ell!}.\eeq
It remains to bound the sum
$$\sum_{\ell\in\O_\sigma} \frac{z^\ell}{\ell!}\le \sum_{\ell\in\O_\sigma} \frac{C^\ell}{\ell!}.$$

Suppose first that $\sigma$ is not monotone, so $1\notin\O_\sigma$. If $\{2,3\}\nsubseteq \O_\sigma$,
then
\beq\label{eq:97a}\sum_{\ell\in\O_\sigma} \frac{C^\ell}{\ell!}\le \frac{C^2}{2!}+\sum_{\ell\ge4} \frac{C^\ell}{\ell!}=e^{C}-1-C-\frac{C^3}{6}<1,\eeq
and so $s^\sigma_{k+1}(z)\le s^\sigma_k(z)$ as desired.
If $\{2,3\}\subseteq \O_\sigma$, then $m=4$ by Lemma~\ref{lem:m4}, and $\sigma$ is c-Wilf equivalent to one of $2413$, $2143$, $1324$, or $1423$.
For each one of these patterns, the bound $\binom{n+\ell}{\ell}$ used in~\eqref{eq:n+l} can be improved individually. For the rest of the argument to carry over,
it is enough to give, for each $\sigma\in\{2413,2143,1324,1423\}$, upper bounds $h_\ell^\sigma$ for $\ell=2,3$ on number of ways to extend a $k$-cluster of length $n$ to a $k{+}1$-cluster of length $n+\ell$, satisfying
\beq\label{eq:97b}\frac{h_2^\sigma\, C^2}{(n+2)(n+1)}+\frac{h_3^\sigma\, C^3}{(n+3)(n+2)(n+1)}<1\eeq
for $n\ge 4$. For each individual pattern, at least one of the bounds $h^\sigma_2=\binom{n+2}{2}$ and $h^\sigma_3=\binom{n+3}{3}$ used in~\eqref{eq:n+l} can be improved as follows to satisfy inequality~\eqref{eq:97b}:
for $\sigma=2413$ we have $h^\sigma_2=\frac{(n+1)^2}{4}$,
for $\sigma=2143$ we have $h^\sigma_2=1$, for $\sigma=1324$ we have $h^\sigma_3=1$, and for $\sigma=1423$ we have $h^\sigma_2=1$. The details are left to the reader.

If $\sigma$ is monotone, then for every $(i_1,i_2,\dots,i_k)\in\I^\sigma_{k}$, the poset $Q^\sigma_{i_1,\dots,i_k}$ is a chain, so $\lext(Q^\sigma_{i_1,\dots,i_k})=1$.
In particular, for $n\ge m$,
$$\lext(Q^\sigma_{i_1,\dots,i_k,i_k+\ell})\frac{z^{n+\ell}}{(n+\ell)!}=\lext(Q^\sigma_{i_1,\dots,i_k})\frac{z^n}{n!} \frac{z^\ell}{(n+\ell)_\ell}
\le\lext(Q^\sigma_{i_1,\dots,i_k})\frac{z^n}{n!} \frac{z^\ell}{(m+\ell)_\ell},$$
where we use the notation $(a)_\ell=a(a-1)\dots (a-\ell+1)$. Summing over $\ell\in\O_\sigma=\{1,2\dots,m-1\}$ and over all $(i_1,\dots,i_k)\in\I^\sigma_k$, we get
$$s^\sigma_{k+1}(z)\le s^\sigma_k(z) \sum_{\ell=1}^{m-1} \frac{z^\ell}{(m+\ell)_\ell}.$$
The fact that $\{s^\sigma_k(z)\}_{k\ge1}$ is decreasing follows now from the inequalities
\beq\label{eq:97c}\sum_{\ell=1}^{m-1} \frac{z^\ell}{(m+\ell)_\ell}\le \sum_{\ell=1}^{m-1} \frac{C^\ell}{(m+\ell)_\ell}\le \sum_{\ell=1}^{m-1} \frac{C^\ell}{(m+1)^\ell}< \frac{\frac{C}{m+1}}{1-\frac{C}{m+1}}<1.\eeq
\end{proof}

\begin{remark}
The argument in the proof of Proposition~\ref{prop:decreasing} shows also that $s^\sigma_{k+1}(z)/s^\sigma_{k}(z)<0.97$ for all $k\ge1$, since inequalities \eqref{eq:97a},
\eqref{eq:97b} and \eqref{eq:97c} also hold when substituting $0.97$ for $1$.
\end{remark}

We can now give bounds on $\omega_\sigma(z)$ for an arbitrary pattern.

\begin{proposition}\label{prop:boundsomega}
For every $\sigma\in\S_m$ and $0<z\le C$, \begin{align*} 1-z+\frac{z^m}{m!}-s^\sigma_2(z)<\omega_\sigma(z)&<1-z+\frac{z^m}{m!}-s^\sigma_2(z)+s^\sigma_3(z)\\ &<1-z+\frac{z^m}{m!}.\end{align*}
\end{proposition}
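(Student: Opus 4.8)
The plan is to read everything off the alternating-series expansion~\eqref{eq:omegaalt}. By Theorem~\ref{thm:GJ} and~\eqref{eq:s12}, for $0<z\le C$ we have
$$\omega_\sigma(z)=1-z-R_\sigma(-1,z)=1-z+\sum_{k\ge1}(-1)^{k+1}s^\sigma_k(z),\qquad s^\sigma_1(z)=\frac{z^m}{m!},$$
so, writing $S_N=\sum_{k=1}^{N}(-1)^{k+1}s^\sigma_k(z)$ for the partial sums and $S=\lim_N S_N$, we have $\omega_\sigma(z)=1-z+S$ and the three claimed inequalities are precisely
$$1-z+S_2<1-z+S<1-z+S_3<1-z+S_1,$$
since $S_1=\tfrac{z^m}{m!}$, $S_2=\tfrac{z^m}{m!}-s^\sigma_2(z)$, and $S_3=\tfrac{z^m}{m!}-s^\sigma_2(z)+s^\sigma_3(z)$. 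Thus it suffices to prove the standard bracketing of an alternating series by its consecutive partial sums, with all inequalities strict.

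To run that argument I would first record two facts about the sequence $\{s^\sigma_k(z)\}_{k\ge1}$ for $0<z\le C$. \emph{Strict positivity:} since $m-1\in\O_\sigma$, the tuple $(1,m,2m-1,\dots,(k-1)(m-1)+1)$ lies in $\I^\sigma_k$, and every finite poset has at least one linear extension, so~\eqref{eq:rnk} gives $r^\sigma_{k(m-1)+1,\,k}\ge1$ and hence $s^\sigma_k(z)\ge z^{k(m-1)+1}/\big(k(m-1)+1\big)!>0$. \emph{Strict decrease and geometric decay:} by Proposition~\ref{prop:decreasing} together with the remark following it, $s^\sigma_{k+1}(z)<0.97\,s^\sigma_k(z)$ for all $k\ge1$; combined with positivity this shows $\{s^\sigma_k(z)\}_{k\ge1}$ is strictly decreasing and tends to $0$, so the series $\sum_{k\ge1}(-1)^{k+1}s^\sigma_k(z)$ converges absolutely and the regroupings below are legitimate.

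Then the conclusion is immediate: with $S=\omega_\sigma(z)-(1-z)$ as above,
$$S-S_2=\sum_{j\ge1}\big(s^\sigma_{2j+1}(z)-s^\sigma_{2j+2}(z)\big),\qquad S_3-S=\sum_{j\ge2}\big(s^\sigma_{2j}(z)-s^\sigma_{2j+1}(z)\big),\qquad S_1-S_3=s^\sigma_2(z)-s^\sigma_3(z),$$
and each of these three quantities is a convergent sum of strictly positive terms, hence strictly positive. This yields $S_2<S<S_3<S_1$, which is the proposition.

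I do not expect a genuine obstacle here. The only points that require care are the convergence of the alternating series on the \emph{closed} interval $(0,C]$ and the strict positivity of every $s^\sigma_k(z)$ there (so that the three inequalities come out strict rather than weak); both are supplied by Proposition~\ref{prop:decreasing}, its following remark, and the elementary lower bound $r^\sigma_{k(m-1)+1,\,k}\ge1$ recorded above. The rest is the routine alternating-series bookkeeping indicated.
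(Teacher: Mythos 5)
Your proof is correct and takes essentially the same approach as the paper: the paper's proof also reads the result directly off the alternating-series expansion~\eqref{eq:omegaalt} and~\eqref{eq:s12}, invoking Proposition~\ref{prop:decreasing} to justify the standard partial-sum bracketing. You are a bit more explicit than the paper about the supporting facts (strict positivity of $s^\sigma_k(z)$ via $r^\sigma_{k(m-1)+1,k}\ge 1$, and convergence via the remark following Proposition~\ref{prop:decreasing}), but the substance and route are identical.
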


\begin{proof}
By Proposition~\ref{prop:decreasing},
$$-s^\sigma_1(z)<-s^\sigma_1(z)+s^\sigma_2(z)-s^\sigma_3(z)<\sum_{k\ge1}(-1)^k s^\sigma_k(z)<-s^\sigma_1(z)+s^\sigma_2(z)$$
for $0<z\le C$, since the terms of the above alternating series decrease in absolute value.
Now we use equations~\eqref{eq:s12} and~\eqref{eq:omegaalt}.
\end{proof}

\begin{corollary}\label{cor:nosing}
For every $\sigma\in\S_m$, $\omega_\sigma(z)$ is analytic in $|z|\le C$.
\end{corollary}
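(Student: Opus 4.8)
The plan is to read off from \eqref{eq:omegaalt} that
$$\omega_\sigma(z)=1-z-R_\sigma(-1,z)=1-z-\sum_{k\ge1}(-1)^k s^\sigma_k(z),$$
and to show that this series converges uniformly on a closed disk strictly larger than $|z|\le C$. Since each $s^\sigma_k(z)$ is analytic (in fact a polynomial), the Weierstrass convergence theorem then yields that $R_\sigma(-1,z)$, and hence $\omega_\sigma(z)$, is analytic on an open set containing $|z|\le C$. The engine for uniform convergence is the geometric decay of the $s^\sigma_k$ furnished by Proposition~\ref{prop:decreasing} and the Remark following it.

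Concretely, I would proceed as follows. First, each $s^\sigma_k(z)$ is a polynomial: if $(i_1,\dots,i_k)\in\I^\sigma_k$ then $i_k-i_1=\sum_{j=1}^{k-1}(i_{j+1}-i_j)$ is a sum of $k-1$ elements of $\O_\sigma\subseteq\{1,\dots,m-1\}$, so the length $n=i_k+m-1$ of the corresponding cluster satisfies $k+m-1\le n\le k(m-1)+1$, and only finitely many $n$ contribute to $s^\sigma_k(z)=\sum_n r^\sigma_{n,k}z^n/n!$. In particular each $s^\sigma_k$ is entire, has non-negative coefficients, so $|s^\sigma_k(z)|\le s^\sigma_k(|z|)$ for all $z$, and $r\mapsto s^\sigma_k(r)$ is non-decreasing on $[0,\infty)$. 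Second, the Remark states $s^\sigma_{k+1}(r)/s^\sigma_k(r)<0.97$ for $0<r\le C$, which together with $s^\sigma_1(r)=r^m/m!$ (see \eqref{eq:s12}) gives $s^\sigma_k(r)\le 0.97^{\,k-1}r^m/m!$ for $0\le r\le C$. Moreover the inequalities \eqref{eq:97a}, \eqref{eq:97b}, \eqref{eq:97c} on which this rests are strict with room to spare—indeed each holds with $0.97$ in place of $1$—so by continuity there is $\delta>0$ for which the argument of Proposition~\ref{prop:decreasing} goes through verbatim with $C$ replaced by $C+\delta$, yielding
$$s^\sigma_k(r)\le 0.97^{\,k-1}\,\frac{r^m}{m!}\qquad\text{for }0\le r\le C+\delta,\ k\ge1.$$
Combining this with the two consequences of non-negativity of coefficients, for $|z|\le C+\delta$ we obtain $|s^\sigma_k(z)|\le s^\sigma_k(C+\delta)\le 0.97^{\,k-1}(C+\delta)^m/m!$; the Weierstrass $M$-test then shows that the polynomials $\sum_{k=1}^{K}(-1)^k s^\sigma_k(z)$ converge uniformly on $|z|\le C+\delta$ to $R_\sigma(-1,z)$, which is therefore analytic on $|z|<C+\delta$, an open set containing $|z|\le C$. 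Since $\omega_\sigma(z)=1-z-R_\sigma(-1,z)$, the corollary follows.

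The only genuinely delicate point is the passage from the open disk $|z|<C$ to the closed disk $|z|\le C$: by Proposition~\ref{prop:boundC} the value $\rho_\sigma^{-1}=C$ is attained (for $\sigma=132$ and its symmetries), so analyticity exactly on the boundary circle is needed, not merely on the interior. This is why I would extract the extra slack $\delta$ from the strict inequalities in the proof of Proposition~\ref{prop:decreasing} rather than simply quoting its statement, which is confined to $0<z\le C$. Everything else—the polynomiality of $s^\sigma_k$, the termwise bounds, and the $M$-test—is routine bookkeeping with geometric series.
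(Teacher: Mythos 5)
Your proof is correct and follows essentially the same route as the paper's: use the remark after Proposition~\ref{prop:decreasing} to get the geometric bound $s^\sigma_k(C)<0.97^k$, bound $|s^\sigma_k(z)|\le s^\sigma_k(|z|)\le s^\sigma_k(C)$ via non-negativity of coefficients, and conclude convergence of the alternating series~\eqref{eq:omegaalt}. You are, if anything, more careful than the paper on one point: the paper stops after showing the series is dominated by $\sum_k 0.97^k$ on $|z|\le C$ and says ``the series converges,'' without explicitly arguing that convergence persists on a slightly larger disk, which is what is really needed for analyticity \emph{at} the boundary circle (and in particular at $z=\rho_{132}^{-1}=C$, where Corollary~\ref{cor:smallestzero} must be applied). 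Your extraction of a $\delta>0$ from the slack in inequalities~\eqref{eq:97a}--\eqref{eq:97c}, together with the Weierstrass $M$-test, fills this in cleanly; an equally short alternative would be to note that $s^\sigma_k$ is a polynomial of degree at most $k(m-1)+1$ with non-negative coefficients, so $s^\sigma_k(r)\le (r/C)^{k(m-1)+1}s^\sigma_k(C)$, and the resulting series still converges geometrically for $C<r<C\cdot 0.97^{-1/(m-1)}$.
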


\begin{proof}
By the remark following Proposition~\ref{prop:decreasing}, $s^\sigma_k(C)\le s^\sigma_1(C)0.97^{k-1}< 0.97^k$ for all $k\ge1$.
Thus, for $|z|\le C$,
$$|s^\sigma_k(z)(-1)^k|\le \sum_n r^\sigma_{n,k}\frac{|z|^n}{n!}\le \sum_n r^\sigma_{n,k}\frac{C^n}{n!}=s^\sigma_k(C)<0.97^k,$$
and so the series~\eqref{eq:omegaalt} converges.
\end{proof}

\begin{corollary}\label{cor:smallestzero}
For every $\sigma\in\S_m$ with $m\ge3$, $\omega_\sigma(z)$ has a zero at $z=\rho_\sigma^{-1}$ and no zeroes in $|z|<\rho_\sigma^{-1}$. 
\end{corollary}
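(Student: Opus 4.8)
The plan is to derive this corollary from the analyticity of $\omega_\sigma$ just established in Corollary~\ref{cor:nosing}, together with the singularity information for $P_\sigma(0,z)$ in Theorem~\ref{thm:sing}, exploiting the relation $P_\sigma(0,z)=1/\omega_\sigma(z)$. The key point is that all the real work has already been done: once we know that $\omega_\sigma$ is analytic on a disk reaching past $\rho_\sigma^{-1}$, the location of the singularities of $1/\omega_\sigma$ is dictated entirely by the zeroes of $\omega_\sigma$.

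Concretely, I would first observe that $\rho_\sigma^{-1}\le C$ by Proposition~\ref{prop:boundC}, so Corollary~\ref{cor:nosing} tells us that $\omega_\sigma(z)$ is analytic on an open set containing the closed disk $\{|z|\le\rho_\sigma^{-1}\}$. Next, to show that $\omega_\sigma$ has no zeroes in $|z|<\rho_\sigma^{-1}$: the identity $P_\sigma(0,z)\,\omega_\sigma(z)=1$ holds on a neighbourhood of the origin, and since $P_\sigma(0,z)$ is analytic in the open disk of radius $\rho_\sigma^{-1}$ (its radius of convergence, by the Exponential Growth Formula) and $\omega_\sigma(z)$ is analytic there as well, the product $P_\sigma(0,z)\,\omega_\sigma(z)$ is analytic on the connected open disk $|z|<\rho_\sigma^{-1}$ and equals $1$ near $0$; by the identity theorem it is identically $1$ on that disk, which forces $\omega_\sigma(z)\ne0$ throughout $|z|<\rho_\sigma^{-1}$.

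It then remains to show that $\omega_\sigma(\rho_\sigma^{-1})=0$, which I would do by contradiction. If $\omega_\sigma(\rho_\sigma^{-1})\ne0$, then $1/\omega_\sigma$ is analytic on some disk $N$ around $\rho_\sigma^{-1}$. On $\{|z|<\rho_\sigma^{-1}\}$ the functions $P_\sigma(0,z)$ and $1/\omega_\sigma(z)$ agree (both are analytic on this connected set and coincide near $0$), so $1/\omega_\sigma$ restricted to $N$ is an analytic continuation of $P_\sigma(0,z)$ to a neighbourhood of $\rho_\sigma^{-1}$, contradicting the fact recorded in Theorem~\ref{thm:sing} that $P_\sigma(0,z)$ has a singularity at $z=\rho_\sigma^{-1}$. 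Hence $\omega_\sigma(\rho_\sigma^{-1})=0$.

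I expect no serious obstacle here; the only care needed is the soft-analysis bookkeeping, namely keeping track of exactly which disk each function is analytic on and making sure that, when invoking the identity theorem, the two functions agree on a set with a limit point inside the common domain of analyticity — which is guaranteed since they agree on an open disk about the origin. This is precisely why the statement is positioned as an immediate consequence of Corollary~\ref{cor:nosing} and Theorem~\ref{thm:sing}.
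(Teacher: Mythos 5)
Your proposal is correct and takes essentially the same route as the paper: use Proposition~\ref{prop:boundC} to place $\rho_\sigma^{-1}$ in the disk $|z|\le C$ where Corollary~\ref{cor:nosing} guarantees $\omega_\sigma$ is analytic, then read off the location of the singularities of $P_\sigma(0,z)=1/\omega_\sigma(z)$ as zeroes of $\omega_\sigma$ and invoke Theorem~\ref{thm:sing}. The paper compresses the identity-theorem and analytic-continuation bookkeeping into one line, whereas you spell it out, but the logic is identical.
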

\begin{proof}
By Theorem~\ref{thm:sing}, $z=\rho_\sigma^{-1}$ is a singularity of $P_\sigma(0,z)$ nearest to the origin, and it satisfies $\rho_\sigma^{-1}\le C$ by Proposition~\ref{prop:boundC}.
By Corollary~\ref{cor:nosing}, the only singularities of $P_\sigma(0,z)=1/\omega_\sigma(z)$ in $|z|\le C$ are zeroes of $\omega_\sigma(z)$,
so in particular $\rho_\sigma^{-1}$ is a zero nearest to the origin.
\end{proof}

Using Proposition~\ref{prop:boundsomega}, the bound from Proposition~\ref{prop:boundC} can be improved for patterns of length at least $4$. In the rest of the paper, we denote by $c$ the smallest positive zero of $1-z+z^4/24$. Note that $c\approx 1.051$.

\begin{corollary}\label{cor:c1}
For every $\sigma\in\S_m$ with $m\ge4$, $$1<\rho_\sigma^{-1}< c.$$
\end{corollary}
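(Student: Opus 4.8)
\noindent The plan is to use the upper bound on $\omega_\sigma(z)$ from Proposition~\ref{prop:boundsomega} to force a sign change of $\omega_\sigma$ on the interval $(0,c)$, and then to invoke Corollary~\ref{cor:smallestzero}. Recall that $\omega_\sigma(0)=1$ and, by Corollary~\ref{cor:nosing}, $\omega_\sigma$ is analytic in $|z|\le C$; since it has real Taylor coefficients, $\omega_\sigma$ restricts to a real continuous function on $[0,C]$. First I would record that $c<C$: indeed $1-z+z^4/24$ is decreasing on $[0,6^{1/3}]\supseteq[0,C]$, is positive at $z=0$, and is negative at $z=C$ (numerically $c\approx1.051<1.276\approx C$), so its smallest positive zero $c$ lies in $(0,C)$. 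Hence Proposition~\ref{prop:boundsomega} applies at $z=c$ and yields $\omega_\sigma(c)<1-c+c^m/m!$.

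Next I would check the elementary inequality $1-c+c^m/m!\le0$ for every $m\ge4$. When $m=4$ this is the equality defining $c$. For $m\ge5$ it is enough to show $c^m/m!\le c^4/24$, equivalently $24\,c^{m-4}\le m!$; since $c<5$ this follows from $24\cdot 5^{m-4}\le m!$, and the ratio $m!/(24\cdot 5^{m-4})$ equals $1$ for $m\in\{4,5\}$ and gets multiplied by $(m+1)/5\ge1$ at each subsequent step, so it is always at least $1$. Combining, $\omega_\sigma(c)<1-c+c^m/m!\le 1-c+c^4/24=0$.

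Thus $\omega_\sigma(0)=1>0$ while $\omega_\sigma(c)<0$, so by the intermediate value theorem $\omega_\sigma$ vanishes somewhere in $(0,c)$. By Corollary~\ref{cor:smallestzero}, $\rho_\sigma^{-1}$ is the smallest positive zero of $\omega_\sigma$, hence $\rho_\sigma^{-1}<c$. The lower bound $1<\rho_\sigma^{-1}$ is already part of Proposition~\ref{prop:boundC}, which completes the proof.

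No real difficulty arises in this argument; the only points needing attention are verifying $c<C$ (so that the bound of Proposition~\ref{prop:boundsomega} is available at $z=c$) and the routine estimate $c^m/m!\le c^4/24$ for $m\ge4$. All the substance is carried by the earlier propositions and corollaries.
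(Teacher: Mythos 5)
Your proof is correct, but it follows a different route from the paper's. The paper reduces immediately to the case $m=4$: it sets $\tau=\red(\sigma_1\sigma_2\sigma_3\sigma_4)\in\S_4$, observes that avoiding $\tau$ forces avoiding $\sigma$ (every occurrence of $\sigma$ begins with an occurrence of $\tau$), so $\alpha_n(\tau)\le\alpha_n(\sigma)$ and hence $\rho_\sigma^{-1}\le\rho_\tau^{-1}$, and then applies Proposition~\ref{prop:boundsomega} only to the length-$4$ pattern $\tau$ to get $\omega_\tau(z)<1-z+z^4/24$ and conclude $\rho_\tau^{-1}<c$. You instead apply Proposition~\ref{prop:boundsomega} directly to $\sigma$ and supply the short computation $c^m/m!\le c^4/24$ for $m\ge4$ to see that the generic bound $1-z+z^m/m!$ is already nonpositive at $z=c$. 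The paper's reduction is slightly cleaner in that it needs no estimate beyond the defining equation of $c$ and the monotone shape of $1-z+z^4/24$ on $(0,C]$; your version avoids introducing the auxiliary pattern $\tau$ and the pattern-containment observation, at the cost of the routine factorial inequality. Both approaches rest on the same ingredients (Propositions~\ref{prop:boundC} and~\ref{prop:boundsomega}, Corollary~\ref{cor:smallestzero}, and the sign change of $\omega$ before $c$), and both implicitly require $c<C$ to place the relevant argument inside the region where Proposition~\ref{prop:boundsomega} is available; you spell this out, which is a small gain in explicitness.
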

\begin{proof}
Let $\tau=\red(\sigma_1\sigma_2\sigma_3\sigma_4)\in\S_4$. Clearly, every permutation avoiding $\tau$ must avoid $\sigma$ as well, so
$\alpha_n(\tau)\le\alpha_n(\sigma)$ for all $n$. It follows that $\rho_\tau\le \rho_\sigma$, so $\rho_\sigma^{-1}$ is bounded from above by
$\rho_\tau^{-1}$, which by Corollary~\ref{cor:smallestzero} is the smallest positive zero of $\omega_\tau(z)$, and by Proposition~\ref{prop:boundC} satisfies $\rho_\tau^{-1}\le C$.
 By Proposition~\ref{prop:boundsomega}, $$\omega_\tau(z)<1-z+\frac{z^4}{24}$$ for $0<z\le C$. Since $\omega_\tau(0)=1$, the smallest positive zero of
$\omega_\tau(z)$ must be to the left of $c$. The fact that $1<\rho_\sigma^{-1}$ follows from Proposition~\ref{prop:boundC}.
\end{proof}

The last ingredient that we need to prove our main theorem is a bound on the number of $2$-clusters.

\begin{lemma}\label{lem:2l}
For every $\sigma\in\S_m$ and $\ell\in\O_\sigma$, $$r^\sigma_{m+\ell,2}\le\binom{2\ell-1}{\ell-1}.$$
\end{lemma}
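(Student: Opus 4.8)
We need to bound the number of $2$-clusters of length $m+\ell$, i.e. pairs $(\pi;1,\ell+1)$ with $\pi\in\S_{m+\ell}$ and two marked occurrences of $\sigma$ starting at positions $1$ and $\ell+1$. By \eqref{eq:rnk}, $r^\sigma_{m+\ell,2}=\lext(Q^\sigma_{1,\ell+1})$, so it suffices to show this poset has at most $\binom{2\ell-1}{\ell-1}$ linear extensions. The idea is to use the cluster structure: since $\ell\in\O_\sigma$, the entries $\pi_1,\dots,\pi_{m+\ell}$ split into the ``left'' block $L=\{\pi_1,\dots,\pi_\ell\}$, the ``middle overlap'' block $M=\{\pi_{\ell+1},\dots,\pi_m\}$ (of size $m-\ell$), and the ``right'' block $R=\{\pi_{m+1},\dots,\pi_{m+\ell}\}$. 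The first occurrence constrains $L\cup M$ to a fixed relative order (it must reduce to $\sigma$), and the second occurrence constrains $M\cup R$ to a fixed relative order (likewise reducing to $\sigma$). In particular the relative order within $M$ is completely determined, and it is the same from both occurrences — which is exactly the definition of $\ell$ being an overlap.

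**Key steps, in order.** First I would fix the (forced) values on the overlap block $M$ relative to everything: once we decide which $m-\ell$ of the $m+\ell$ total values land in $M$, their internal order is forced, and then the first occurrence forces the entire relative order of $L$ against $M$, and the second forces the entire relative order of $R$ against $M$. So a linear extension is determined by: (a) how the values of $L$ interleave with the fixed chain structure coming from $M$, subject to the pattern constraint from occurrence $1$, and (b) how the values of $R$ interleave, subject to occurrence $2$; but crucially $L$ and $R$ are only comparable through $M$. So $\lext(Q^\sigma_{1,\ell+1})$ is at most the number of ways to shuffle the totally ordered set $L$ (as a chain of size $\ell$, since its order is forced by occurrence $1$ relative to $M$, hence internally forced too) into a chain containing $M\cup R$'s structure. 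Actually the cleanest counting: the linear order on $L\cup M\cup R$ restricted to $L\cup M$ is one fixed order $\lambda$ of length $m$, restricted to $M\cup R$ is one fixed order $\mu$ of length $m$, and these agree on $M$; the number of linear orders on all $m+\ell$ elements whose restrictions are $\lambda$ and $\mu$ is exactly the number of shuffles of the $\ell$ elements of $L\setminus M = L$ with the $\ell$ elements of $R\setminus M = R$ that are consistent with the positions that $M$ occupies in both $\lambda$ and $\mu$. This is a lattice-path / ballot-type count. The worst case is when $M$ imposes no constraint, i.e. $m=\ell$ (the overlap is trivial beyond interleaving) or when the constraints are as loose as possible; then we are shuffling two chains of length $\ell$, giving $\binom{2\ell}{\ell}$ — but one position is pinned down (the relative order of the bottom, or top, element is forced because $\lambda$ and $\mu$ share at least the structural endpoint of $M$, or because $i_{j+1}\le i_j+m-1$ type adjacency forces one comparison), bringing it down to $\binom{2\ell}{\ell}/2 \cdot (\text{something})$... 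More carefully: I expect the bound $\binom{2\ell-1}{\ell-1}=\frac12\binom{2\ell}{\ell}$ to come from the fact that the first entry of $R$ (namely $\pi_{m+1}=\pi_{i_2+m-1-(\ell-1)}$, the entry at the start of the second occurrence's tail) is forced to be comparable to a specific entry of $L$ in a fixed direction — eliminating exactly half of the $\binom{2\ell}{\ell}$ shuffles.

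**Main obstacle.** The hard part is justifying rigorously that the only freedom in a $2$-cluster beyond the forced internal orders of $L$, $M$, $R$ is a shuffle of two chains of length $\ell$, and pinning down exactly one forced cross-comparison to get the factor of $1/2$. Concretely: I must show that $L$ is internally totally ordered by occurrence $1$ (true, since $\red$ of $\pi_1\cdots\pi_m$ is $\sigma$, fixing all pairwise orders, in particular within $L$), that $R$ is internally totally ordered by occurrence $2$, that the elements of $L$ are incomparable to elements of $R$ in the poset $Q^\sigma_{1,\ell+1}$ (they share no common occurrence), and then that among the $\binom{2\ell}{\ell}$ a priori shuffles, at least one specific pair $(\pi_a,\pi_b)$ with $\pi_a\in L$, $\pi_b\in R$ has a forced order — this forced order arising because both $\pi_a$ and $\pi_b$ are comparable to a common element of $M$ in the same or opposite sense, combined with the overlap consistency. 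Getting this last comparison to be genuinely forced for \emph{every} $\sigma$ and every $\ell\in\O_\sigma$ (rather than just generically) is the delicate point; I would handle it by locating, inside the overlap, the image of the smallest (or largest) value and tracking which of the ``new'' entries on the right it pins. Once that single forced comparison is established, $\lext(Q^\sigma_{1,\ell+1})\le \binom{2\ell}{\ell} - \binom{2\ell-2}{\ell-1} \cdot(\text{count of shuffles violating it})$, and an easy check gives $\le\binom{2\ell-1}{\ell-1}$. If a clean uniform forced comparison is not available, the fallback is to bound by the number of order ideals / antichains in a two-chain product poset with one relation added, which again yields $\binom{2\ell-1}{\ell-1}$.
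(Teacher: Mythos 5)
The setup in your proposal is correct and matches the paper's: $r^\sigma_{m+\ell,2}=\lext(Q^\sigma_{1,\ell+1})$, the poset consists of two chains sharing the overlap block $M=\{\pi_{\ell+1},\dots,\pi_m\}$, the blocks $L=\{\pi_1,\dots,\pi_\ell\}$ and $R=\{\pi_{m+1},\dots,\pi_{m+\ell}\}$ are each internally totally ordered, and the only freedom is the interleaving of $L$ against $R$. But your strategy for the actual numerical bound has a genuine gap: a \emph{single} forced cross-comparison between $L$ and $R$ does not bring $\binom{2\ell}{\ell}$ down to $\binom{2\ell-1}{\ell-1}$. In the worst case a lone relation such as $\pi_1<\pi_{m+\ell}$ (bottom of $L$ below top of $R$) removes only one shuffle — the one that places all of $R$ below all of $L$ — leaving $\binom{2\ell}{\ell}-1$, which exceeds $\binom{2\ell-1}{\ell-1}=\tfrac12\binom{2\ell}{\ell}$ for $\ell\ge2$. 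Your own writeup sensed this difficulty ("the delicate point") and offered a fallback about order ideals of a two-chain poset with one added relation, but that fallback has the same problem; one extra relation is simply not strong enough.

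The paper's argument uses a different and more robust device: pick the specific pivot $\pi_m$ (the top of $M$ in the first occurrence, equivalently the bottom of the second occurrence's new part), and split $L$ into the $a_0$ elements below $\pi_m$ and $\ell-a_0$ above, and $R$ into the $b_0$ below and $\ell-b_0$ above. Since everything below $\pi_m$ precedes everything above it, the shuffle of $L$ against $R$ factors through the two sides, giving $\lext(Q^\sigma_{1,\ell+1})\le\binom{a_0+b_0}{a_0}\binom{2\ell-a_0-b_0}{\ell-a_0}$. The crucial observation is $a_0\neq b_0$: otherwise $\pi_m$ would sit at the same relative rank in both occurrences of $\sigma$, forcing $\sigma_m=\sigma_1$, which is impossible. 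Maximizing the product of binomials subject to $a_0\neq b_0$ (using the identity~\eqref{eq:maxf} established in Section~\ref{sec:nol}) yields exactly $\binom{2\ell-1}{\ell-1}$, whereas allowing $a_0=b_0$ would permit $\binom{\ell}{\ell/2}^2>\binom{2\ell-1}{\ell-1}$. So the structural fact that does the work is not one forced cross-comparison but the asymmetry of the split sizes; that is the idea missing from your proof.
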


\begin{proof}
Recall that $r^\sigma_{m+\ell,2}$ is the number of linear extensions of $Q^\sigma_{1,\ell+1}$. Letting $\varsigma=\sigma^{-1}$, this poset consists of two chains of length $m$,
\beq\label{eq:2chains} \pi_{\varsigma_1}<\pi_{\varsigma_2}<\dots<\pi_{\varsigma_m} \quad \mbox{and} \quad \pi_{\varsigma_1+\ell}<\pi_{\varsigma_2+\ell}<\dots<\pi_{\varsigma_m+\ell},\eeq
sharing $m-\ell$ elements $\pi_{\ell+1},\dots,\pi_{m}$. 
Denote by $A=\{\pi_1,\pi_2,\dots,\pi_\ell\}$ and $B=\{\pi_{m+1},\pi_{m+2},\dots,\pi_{m+\ell}\}$ the sets of elements in each chain that are not in the other chain. 

Let $a_0$ (resp. $b_0$) be the number of elements of $A$ (resp. $B$) that are less than $\pi_m$ in $Q^\sigma_{1,\ell+1}$.
Then
$$r^\sigma_{m+\ell,2}\le \binom{a_0+b_0}{a_0}\binom{2\ell-a_0-b_0}{\ell-a_0},$$
because a linear extension of $Q^\sigma_{1,\ell+1}$ is determined by the order of the elements of $A$
relative to the elements of $B$, but only elements below (resp. above) $\pi_m$ need to be compared with each other.

By symmetry, we can assume that $a_0+b_0\le \ell$ and $a_0\le b_0$. Besides, we claim that $a_0\neq b_0$,
because otherwise $\pi_m$ would be in the same relative position in each of the two chains~\eqref{eq:2chains}, implying that $m=m+\ell$, which is a contradiction.
Thus, an upper bound on $r^\sigma_{m+\ell,2}$ is given by
$$\underset{a_0+b_0\le \ell}{\max_{0\le a_0<b_0\le \ell}} \binom{a_0+b_0}{a_0}\binom{2\ell-a_0-b_0}{\ell-a_0}.$$
Setting $p=\ell+1$, $a=a_0+1$, $b=b_0+1$, this expression becomes
$$\underset{a+b\le p+1}{\max_{1\le a<b\le p}} \binom{a+b-2}{a-1}\binom{2p-a-b}{p-a}=\binom{2p-3}{p-2}=\binom{2\ell-1}{\ell-1},$$
using equation~\eqref{eq:maxf}, which will be proved later.
\end{proof}

We are now ready to prove the CMP Conjecture.

\begin{theorem}\label{thm:main}
For every $\sigma\in\S_m\setminus\{12\dots m,m\dots 21\}$, there exists $n_0$ such that $$\alpha_n(\sigma)<\alpha_n(12\dots m)$$ for all $n\ge n_0$.
\end{theorem}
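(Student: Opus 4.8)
The plan is to compare the singularities of $P_\sigma(0,z)=1/\omega_\sigma(z)$ and $P_{12\dots m}(0,z)=1/\omega_{12\dots m}(z)$ by comparing the locations of their smallest positive zeroes, using the bounds already assembled. By Corollary~\ref{cor:smallestzero}, $\rho_\sigma^{-1}$ is the smallest positive zero of $\omega_\sigma(z)$ and $\rho_{12\dots m}^{-1}$ is the smallest positive zero of $\omega_{12\dots m}(z)$, and by Corollary~\ref{cor:c1} both lie in the interval $(1,c)$ (for $m\ge 4$; the case $m=3$ is already handled in \cite{EliNoy}). So it suffices to show $\rho_\sigma^{-1}<\rho_{12\dots m}^{-1}$, i.e.\ that the zero of $\omega_\sigma$ occurs strictly to the left of the zero of $\omega_{12\dots m}$. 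Once the strict inequality $\rho_\sigma<\rho_{12\dots m}$ between growth rates is established, Theorem~\ref{thm:EKP} (or simply the Exponential Growth Formula together with the fact that the subexponential factors are bounded) gives $\alpha_n(\sigma)<\alpha_n(12\dots m)$ for all sufficiently large $n$, which is exactly the statement of Theorem~\ref{thm:main}.

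To get the comparison of zeroes, I would sandwich both functions on the interval $(1,c]$. From Proposition~\ref{prop:ineqmon} we have the \emph{upper} bound
$$\omega_{12\dots m}(z)<1-z+\frac{z^m}{m!}-\frac{z^{m+1}}{(m+1)!}+\frac{z^{2m}}{(2m)!},$$
and from Proposition~\ref{prop:boundsomega} we have the \emph{lower} bound
$$\omega_\sigma(z)>1-z+\frac{z^m}{m!}-s^\sigma_2(z).$$
Thus, to conclude that $\omega_\sigma$ vanishes before $\omega_{12\dots m}$ does, it is enough to check that at the zero $z_0:=\rho_{12\dots m}^{-1}$ of $\omega_{12\dots m}$ the lower bound for $\omega_\sigma$ is already negative — equivalently, that for $z$ in the relevant range,
$$1-z+\frac{z^m}{m!}-s^\sigma_2(z)\ <\ 1-z+\frac{z^m}{m!}-\frac{z^{m+1}}{(m+1)!}+\frac{z^{2m}}{(2m)!},$$
i.e.\ that
$$s^\sigma_2(z)\ >\ \frac{z^{m+1}}{(m+1)!}-\frac{z^{2m}}{(2m)!}$$
on the interval where $z_0$ lives, namely $z\in(1,c]$. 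A cleaner route: since $\omega_{12\dots m}(z_0)=0$, the displayed Proposition~\ref{prop:ineqmon} inequality forces $0<\tfrac{z_0^{m+1}}{(m+1)!}-\tfrac{z_0^{2m}}{(2m)!}-(\text{something})$... more precisely, rearrange Proposition~\ref{prop:ineqmon} at $z=z_0$ to get $1-z_0+\tfrac{z_0^m}{m!}>\tfrac{z_0^{m+1}}{(m+1)!}-\tfrac{z_0^{2m}}{(2m)!}$, and then show $s^\sigma_2(z_0)$ exceeds that right-hand side; since $\omega_\sigma(z_0)>1-z_0+\tfrac{z_0^m}{m!}-s^\sigma_2(z_0)$ this would not immediately give negativity, so the genuinely needed inequality is $s^\sigma_2(z_0)\ge 1-z_0+\tfrac{z_0^m}{m!}$, which we estimate via $1-z_0+\tfrac{z_0^m}{m!}=\omega_{12\dots m}(z_0)+(\text{tail})\le\tfrac{z_0^{m+1}}{(m+1)!}$ up to higher-order terms. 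In other words, the crux reduces to the quantitative claim
$$s^\sigma_2(z)\ \ge\ \frac{z^{m+1}}{(m+1)!}\qquad\text{for }z\in(1,c],$$
possibly with a correction of size $O(z^{2m}/(2m)!)$ that is negligible.

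For that, I would use \eqref{eq:s12}, $s^\sigma_2(z)=\sum_{\ell\in\O_\sigma}r^\sigma_{m+\ell,2}\,\frac{z^{m+\ell}}{(m+\ell)!}$, together with the fact that $m-1\in\O_\sigma$ always and $r^\sigma_{m+\ell,2}\ge 1$ for every $\ell\in\O_\sigma$ (the single cluster obtained by gluing two copies of $\sigma$ overlapping in $m-\ell$ positions — one always exists). Keeping just the $\ell=m-1$ term gives $s^\sigma_2(z)\ge\frac{z^{2m-1}}{(2m-1)!}$, which for $z>1$ is far too small compared to $\frac{z^{m+1}}{(m+1)!}$ when $m$ is large — so the naive bound $r^\sigma\ge1$ is \emph{not} enough, and this is the main obstacle. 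The fix is that a non-monotone $\sigma$ must have some overlap $\ell$ with $\ell\le m-2$ (if the only overlap below $m-1$ were... actually a non-monotone pattern can have $\O_\sigma=\{m-1\}$, e.g.\ any non-overlapping pattern), so this approach as stated cannot be the whole story; one must instead compare $\omega_\sigma$ and $\omega_{12\dots m}$ through a finer expansion that accounts for $s^\sigma_3$ and uses Lemma~\ref{lem:2l} ($r^\sigma_{m+\ell,2}\le\binom{2\ell-1}{\ell-1}$) to keep $s^\sigma_2$ from being too large, while showing the monotone pattern's alternating series \eqref{eq:omegaalt} has the largest smallest-zero. Concretely, I expect the real argument to set up the inequality $\omega_\sigma(z)<\omega_{12\dots m}(z)$ on a subinterval $[c-\epsilon,c]$ (near where both zeroes live, since both $\rho^{-1}$ are close to but below the root $c$ of $1-z+z^4/24$ when $m$... no: the relevant root is that of $1-z+z^m/m!$, call it $c_m$, which tends to $1$ as $m\to\infty$). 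So the cleanest framing: let $c_m$ be the smallest positive zero of $1-z+z^m/m!$; by Proposition~\ref{prop:boundsomega} both $\rho_\sigma^{-1}$ and $\rho_{12\dots m}^{-1}$ are slightly less than $c_m$, and on the short interval $[\,\rho_\sigma^{-1},\rho_{12\dots m}^{-1}\,]$ one shows $\omega_\sigma<0<\omega_{12\dots m}$ is impossible unless $\rho_\sigma^{-1}<\rho_{12\dots m}^{-1}$; the inequality to verify is that $s^\sigma_2(z)-s^\sigma_3(z)$ (lower bound correction for $\omega_\sigma$) exceeds $\frac{z^{m+1}}{(m+1)!}-\frac{z^{2m}}{(2m)!}-\frac{z^{2m+1}}{(2m+1)!}+\dots$ (the monotone correction) at $z=\rho_{12\dots m}^{-1}$. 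I anticipate that the genuinely delicate point — the step I'd flag as the main obstacle — is ruling out the case where $\sigma$ is non-overlapping (so $\O_\sigma=\{m-1\}$ and $s^\sigma_2(z)=\frac{z^{2m-1}}{(2m-1)!}$ is tiny): there one cannot beat the monotone pattern via $s_2$ alone, and must instead invoke the already-known result (stated in the introduction, from \cite{EliNoy2}) that $\rho_\sigma>\rho_{12\dots m}$ for \emph{non-overlapping} $\sigma$, handling the overlapping case separately by the $s_2$-comparison above. So the proof likely splits into: (i) $\sigma$ non-overlapping — cite \cite{EliNoy2}; (ii) $\sigma$ overlapping, not monotone — use $r^\sigma_{m+\ell,2}\ge1$ for some $\ell\le m-2$, giving $s^\sigma_2(z)\ge\frac{z^{m+\ell}}{(m+\ell)!}$ with $\ell\le m-2$, and check this beats the monotone correction on $(1,c_m]$ via a short calculation combined with Proposition~\ref{prop:ineqmon} and Proposition~\ref{prop:boundsomega}.
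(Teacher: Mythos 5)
Your setup is right in spirit—compare the smallest positive zeroes of $\omega_\sigma$ and $\omega_{12\dots m}$ via Corollary~\ref{cor:smallestzero}, using Propositions~\ref{prop:ineqmon} and~\ref{prop:boundsomega}—but you have the direction of the inequality reversed, and this error propagates through the whole argument. To get $\alpha_n(\sigma)<\alpha_n(12\dots m)$ you need $\rho_\sigma<\rho_{12\dots m}$, which is $\rho_\sigma^{-1}>\rho_{12\dots m}^{-1}$, not $\rho_\sigma^{-1}<\rho_{12\dots m}^{-1}$ as you wrote. So the zero of $\omega_{12\dots m}$ must occur \emph{first}, to the \emph{left} of the zero of $\omega_\sigma$; equivalently, one must show $\omega_{12\dots m}(z)<\omega_\sigma(z)$ on $(0,c)$. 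Chaining the upper bound for $\omega_{12\dots m}$ (Proposition~\ref{prop:ineqmon}) below the lower bound for $\omega_\sigma$ (Proposition~\ref{prop:boundsomega}) then reduces the theorem to showing $s^\sigma_2(z)<\frac{z^{m+1}}{(m+1)!}-\frac{z^{2m}}{(2m)!}$ — i.e.\ that $s^\sigma_2$ is \emph{small}, the opposite of what you tried to establish.

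This sign reversal is exactly what manufactures the ``main obstacle'' you flag. You worried that for non-overlapping $\sigma$, $s^\sigma_2(z)=r^\sigma_{2m-1,2}\,z^{2m-1}/(2m-1)!$ is ``far too small'' and that a separate appeal to \cite{EliNoy2} would be needed. In fact that smallness is a \emph{feature}: when $\sigma$ is non-overlapping, the required inequality $s^\sigma_2(z)<z^{m+1}/(m+1)!-z^{2m}/(2m)!$ is trivially true. The real work is in the other regime, where $\O_\sigma$ contains small overlaps ($\ell=2,3,\dots$) making $s^\sigma_2$ potentially large; there one needs the upper bound $r^\sigma_{m+\ell,2}\le\binom{2\ell-1}{\ell-1}$ from Lemma~\ref{lem:2l}, together with the crucial structural fact that $1\notin\O_\sigma$ for non-monotone $\sigma$ — so every term of $s^\sigma_2$ has degree at least $m+2$, one more than the $z^{m+1}/(m+1)!$ term that appears in the monotone expansion because $1\in\O_{12\dots m}$. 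That one-degree gap, quantified via the $g(m)$-type estimate, is the heart of the proof. Once the direction is corrected, no case split between overlapping and non-overlapping is needed, and no appeal to the non-overlapping result of \cite{EliNoy2} is required.
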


\begin{proof}
The case $m=3$ was proved in~\cite{EliNoy}, so we will assume for simplicity that $m\ge4$.
Let $\sigma\in\S_m\setminus\{12\dots m,m\dots 21\}$.
We will prove that $\rho_\sigma<\rho_{12\dots m}$, which is equivalent to the statement of the theorem.
By Corollary~\ref{cor:smallestzero}, $\rho_\sigma^{-1}$ and $\rho_{12\dots m}^{-1}$ are the smallest positive zeroes of $\omega_\sigma(z)$ and $\omega_{12\dots m}(z)$, respectively.
By Corollary~\ref{cor:c1}, their values lie between $1$ and $c$. Since $\omega_\sigma(0)=\omega_{12\dots m}(0)=1$,
the inequality $\rho_{12\dots m}^{-1}<\rho_\sigma^{-1}$ will be a consequence of the fact that \beq\label{eq:omegamonsigma}\omega_{12\dots m}(z)<\omega_\sigma(z)\eeq for $0<z<c$.

By Proposition~\ref{prop:ineqmon}, the lower bound in Proposition~\ref{prop:boundsomega}, and equation~\eqref{eq:s12}, inequality~\eqref{eq:omegamonsigma} will follow if we show that
$$\sum_{\ell\in\O_\sigma} r^\sigma_{m+\ell,2}\frac{z^{m+\ell}}{(m+\ell)!} <\frac{z^{m+1}}{(m+1)!}-\frac{z^{2m}}{(2m)!}$$
for $0<z<c$, which in turn follows from
\beq\label{eq:ineqr}\sum_{\ell\in\O_\sigma} r^\sigma_{m+\ell,2}\frac{c^{\ell-1}}{(m+\ell)_{\ell-1}}+\frac{c^{m-1}}{(2m)_{m-1}}<1.\eeq

Using Lemma~\ref{lem:2l} and the fact that $1\notin\O_\sigma$, 
we get
\beq\label{eq:sumr2}\sum_{\ell\in\O_\sigma} r^\sigma_{m+\ell,2}\frac{c^{\ell-1}}{(m+\ell)_{\ell-1}}
\le \sum_{\ell\in\O_\sigma} \binom{2\ell-1}{\ell-1}\frac{c^{\ell-1}}{(m+\ell)_{\ell-1}}
\le \sum_{\ell=2}^{m-1} \binom{2\ell-1}{\ell-1}\frac{c^{\ell-1}}{(m+\ell)_{\ell-1}}.\eeq
The fact that the last term of the sum on the right, corresponding to $\ell=m-1$, equals
$$\frac{m(m+1)c^{m-2}}{2(2m-1)(m-1)!},$$
suggests that we define
$$g(m)=\sum_{\ell=2}^{m-1} \binom{2\ell-1}{\ell-1}\frac{c^{\ell-1}}{(m+\ell)_{\ell-1}}+\sum_{j\ge m}\frac{(j+1)(j+2)c^{j-1}}{2(2j+1)j!},$$
which clearly bounds~\eqref{eq:sumr2} from above.
Comparing the above expressions for $g(m)$ and $g(m+1)$ term by term, it is clear that $g(m)>g(m+1)$ for all $m$. Indeed,
when going from $g(m)$ to $g(m+1)$, the first $m-2$ terms of the sum become smaller and the others stay equal.
It follows that for $m\ge4$,  $g(m)\le g(4)<0.9$,
where the last inequality is obtained by bounding the infinite sum in $g(4)$ by
$$\sum_{j\ge 4}\frac{(j+1)(j+2)c^{j-1}}{2(2j+1)j!}<\frac{1}{4}\sum_{j\ge 4}\frac{(j+3)c^{j-1}}{j!}=\frac{1}{4}\left(e^{c}(1+\frac{3}{c})-\frac{3}{c}-4-\frac{5c}{2}-c^2\right).$$
Using now that $\frac{c^{m-1}}{(2m)_{m-1}}<0.1$ for $m\ge4$, inequality~\eqref{eq:ineqr} is proved.
\end{proof}

\section{Non-overlapping patterns}\label{sec:nol}

Recall that $\sigma\in\S_m$ is non-overlapping if $\O_\sigma=\{m-1\}$ and $m\ge3$.
We denote by $\N_m$ the set of non-overlapping patterns in $\S_m$. These patterns have been considered recently by
Duane and Remmel~\cite{DR} and by B\'ona~\cite{Bon}, who shows that $|\N_m|/|\S_m|>0.364$ for all $m$.

In this section we study non-overlapping patterns with two purposes. On one hand, we prove Conjecture~7.1 from~\cite{EliNoy2},
stating that among non-overlapping patterns of length $m$, the pattern $134\dots m2$ is the most avoided one, while $12\dots(m-2)m(m-1)$ is the least avoided one.
This is stated as Theorem~\ref{thm:nol} below.
On the other hand, the fact that $12\dots(m-2)m(m-1)$ is the least avoided non-overlapping pattern of length $m$ will be a significant part of our proof, in Section~\ref{sec:least}, that
this pattern is also the least avoided among {\em all} patterns of length $m$. This was conjectured by Nakamura~\cite{Nak}, and will be proved in Theorem~\ref{thm:nak}.

For $\sigma\in\N_m$, $k$-clusters with respect to $\sigma$ must have length $k(m-1)+1$. In fact, $(\pi;i_1,\dots,i_k)$ is a $k$-cluster if and only if
$i_j=(j-1)(m-1)+1$ for $1\le j\le k$, and $\pi$ is a linear extension of the poset $Q^\sigma_{1,m,2m-1,\dots,(k-1)(m-1)+1}$, which we denote by $D^\sigma_k$ for simplicity.
Letting $a=\sigma_1$, $b=\sigma_m$ and $\varsigma=\sigma^{-1}$, the poset $D^\sigma_k$ consists of $k$ chains $$\pi_{\varsigma_1+(j-1)(m-1)}<\pi_{\varsigma_2+(j-1)(m-1)}<\dots<\pi_{\varsigma_m+(j-1)(m-1)}$$ for $1\le j\le k$,
where the $j$-th and $j{+}1$-st chains share one element $\pi_{\varsigma_b+(j-1)(m-1)}=\pi_{i_{j+1}}=\pi_{\varsigma_a+j(m-1)}$.
An example is drawn in Figure~\ref{fig:posetnol}. We denote the number of linear extensions of $D^\sigma_k$, which is the number of $k$-clusters with respect to $\sigma$, by
$d^\sigma_k=r^\sigma_{k(m-1)+1,k}$. Thus, in the non-overlapping case,
\beq\label{eq:snol} s^\sigma_k(z)=d^\sigma_k\frac{z^{k(m-1)+1}}{(k(m-1)+1)!},\eeq
and by Theorem~\ref{thm:GJ},
\beq\label{eq:omeganol}\omega_\sigma(u,z)=1-z-\sum_{k\ge1}(u-1)^k d^\sigma_k\frac{z^{k(m-1)+1}}{(k(m-1)+1)!}.\eeq
It is clear from this construction that the poset $D^\sigma_k$ and the numbers $d^\sigma_k$ depend only on $m$, $\sigma_1$ and $\sigma_m$, but not on the rest of the entries of~$\sigma$ (as long as it is non-overlapping), and consequently
so does $P_\sigma(u,z)=1/\omega_\sigma(u,z)$. This had been conjectured in~\cite{Elitesis} and has been proved by Dotsenko and Khoroshkin~\cite{Dot}, and independently by Duane and Remmel~\cite{DR}.

\begin{figure}[htb]
\centering
\begin{tikzpicture}[scale=.5]
\fill (0,0) circle (0.1) node[left]{$\pi_{\varsigma_1}$};
\fill (0,1) circle (0.1) node[left]{$\pi_{\varsigma_2}$};
\fill (0,2) circle (0.1) ;
\fill (0,3) circle (0.1) ;
\fill (0,4) circle (0.1) node[left]{$\pi_{\varsigma_{b-1}}$};
\fill (0,5) circle (0.1) node[left]{$\pi_{\varsigma_b}=\pi_m=\pi_{\varsigma_{a}+m-1}$};
\fill (0,6) circle (0.1) node[left]{$\pi_{\varsigma_{b+1}}$};
\fill (0,7) circle (0.1) ;
\fill (0,8) circle (0.1) ;
\fill (0,9) circle (0.1) node[left]{$\pi_{\varsigma_m}$};

\fill (1,2) circle (0.1) node[right]{$\pi_{\varsigma_1+m-1}$};
\fill (1,3) circle (0.1) ;
\fill (1,4) circle (0.1) ;
\fill (1,6) circle (0.1) ;
\fill (1,7) circle (0.1) ;
\fill (1,8) circle (0.1) ;
\fill (1,9) circle (0.1) ;
\fill (1,10) circle (0.1) ;
\fill (1,11) circle (0.1) node[left]{$\pi_{\varsigma_m+m-1}$};

\fill (2,4) circle (0.1) node[right]{$\pi_{\varsigma_1+2(m-1)}$};
\fill (2,5) circle (0.1) ;
\fill (2,6) circle (0.1) ;
\fill (2,8) circle (0.1) ;
\fill (2,9) circle (0.1) ;
\fill (2,10) circle (0.1) ;
\fill (2,11) circle (0.1) ;
\fill (2,12) circle (0.1) ;
\fill (2,13) circle (0.1) node[left]{$\pi_{\varsigma_m+2(m-1)}$};

\fill (3,6) circle (0.1) node[right]{$\pi_{\varsigma_1+(k-1)(m-1)}$};
\fill (3,7) circle (0.1) ;
\fill (3,8) circle (0.1) ;
\fill (3,10) circle (0.1) ;
\fill (3,11) circle (0.1) ;
\fill (3,12) circle (0.1) ;
\fill (3,13) circle (0.1) ;
\fill (3,14) circle (0.1) ;
\fill (3,15) circle (0.1) node[right]{$\pi_{\varsigma_m+(k-1)(m-1)}$};

\draw (0,0)--(0,9);
\draw[color=red] (1,2)--(1,4)--(0,5)--(1,6)--(1,11);
\draw[color=blue] (2,4)--(2,6)--(1,7)--(2,8)--(2,13);
\draw[color=brown] (3,6)--(3,8)--(2,9)--(3,10)--(3,15);
\end{tikzpicture}
\caption{\label{fig:posetnol} The poset $D^\sigma_k$ for $\sigma\in\N_m$ with $\sigma_1=a$ and $\sigma_m=b$. In this picture, $k=4$, $m=10$, $a=4$ and $b=6$.}
\end{figure}

\begin{lemma}[\cite{Dot,DR}]\label{lem:1m}
If $\sigma,\tau\in\N_m$ are such that $\sigma_1=\tau_1$ and $\sigma_m=\tau_m$, then $\sigma$ and $\tau$ are strongly c-Wilf equivalent.
\end{lemma}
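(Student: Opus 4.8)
The plan is to show that the number of $k$-clusters $d^\sigma_k$ depends only on $\sigma_1$ and $\sigma_m$ (and $m$ and $k$), since this immediately gives strong c-Wilf equivalence via the cluster method: by \eqref{eq:omeganol}, $\omega_\sigma(u,z)$ is determined by the sequence $(d^\sigma_k)_{k\ge1}$, hence so is $P_\sigma(u,z)=1/\omega_\sigma(u,z)$. So the entire content reduces to a statement about linear extensions of the posets $D^\sigma_k$: if $\sigma_1=\tau_1$ and $\sigma_m=\tau_m$, then $D^\sigma_k$ and $D^\tau_k$ have the same number of linear extensions for every $k$.

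First I would recall the explicit description of $D^\sigma_k$ given just before the lemma: with $a=\sigma_1$, $b=\sigma_m$, $\varsigma=\sigma^{-1}$, it consists of $k$ chains of length $m$, the $j$-th being $\pi_{\varsigma_1+(j-1)(m-1)}<\dots<\pi_{\varsigma_m+(j-1)(m-1)}$, where consecutive chains are glued along a single shared element, the top-but-$(m-b)$ element of chain $j$ being identified with the bottom-but-$(a-1)$ element of chain $j{+}1$. The key observation is that a chain of length $m$, as an abstract poset, carries no information beyond its length; the only data that matters for how the chains fit together is \emph{where} the shared element sits within each chain — namely at height $a$ from the bottom in the lower chain and at height $b$ from the bottom (equivalently $m-b$ from the top) in the upper chain. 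Thus the abstract isomorphism type of $D^\sigma_k$ depends only on $m$, $k$, $a=\sigma_1$ and $b=\sigma_m$, and not on the interior entries $\sigma_2,\dots,\sigma_{m-1}$. Since $\lext(Q)$ is an isomorphism invariant of the poset $Q$, it follows that $d^\sigma_k=d^\tau_k$ whenever $\sigma_1=\tau_1$ and $\sigma_m=\tau_m$.

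To make the isomorphism precise I would exhibit the relabeling explicitly: the underlying set of $D^\sigma_k$ is $\{\pi_1,\dots,\pi_{k(m-1)+1}\}$ and the partial order is exactly the one generated by the $k$ chain relations above; the map sending $\pi_{\varsigma^\sigma_t+(j-1)(m-1)}$ to $\pi_{\varsigma^\tau_t+(j-1)(m-1)}$ for each $1\le j\le k$, $1\le t\le m$, is well defined (one checks the two names a shared vertex receives in $D^\sigma_k$ map to the two names the corresponding shared vertex receives in $D^\tau_k$, which holds precisely because $\varsigma^\sigma_1=\varsigma^\tau_1$ forces agreement of the ``bottom'' index and $\varsigma^\sigma_m=\varsigma^\tau_m$ forces agreement of the ``top'' index — these are equivalent to $\sigma_1=\tau_1$ and $\sigma_m=\tau_m$) and is an order isomorphism. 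Then $d^\sigma_k=\lext(D^\sigma_k)=\lext(D^\tau_k)=d^\tau_k$ for all $k$, and we conclude by \eqref{eq:omeganol} and Theorem~\ref{thm:GJ}.

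I expect the only mildly delicate point — the ``main obstacle,'' though it is not a serious one — to be the bookkeeping that verifies the shared elements are matched up correctly under the relabeling: one must confirm that the element identified between chains $j$ and $j{+}1$ in $D^\sigma_k$, which is $\pi_{\varsigma^\sigma_b+(j-1)(m-1)}=\pi_{\varsigma^\sigma_a+j(m-1)}$, is sent to $\pi_{\varsigma^\tau_b+(j-1)(m-1)}=\pi_{\varsigma^\tau_a+j(m-1)}$, the shared element of $D^\tau_k$. This is exactly where the hypotheses $\sigma_1=\tau_1$ (hence $\varsigma^\sigma_a=\varsigma^\tau_a$ where $a$ is the common value $\sigma_1=\tau_1$, and in fact $\varsigma^\sigma_{\sigma_1}=\varsigma^\tau_{\tau_1}$) and $\sigma_m=\tau_m$ enter, and once it is observed everything else is formal. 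Everything else — that chains map to chains, that order is preserved — is immediate from the construction, so no real computation is required.
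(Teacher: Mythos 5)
Your argument is correct and is essentially the one the paper sketches (the paper cites~\cite{Dot,DR} and justifies the lemma in a single sentence by observing that $D^\sigma_k$ depends only on $m$, $\sigma_1$, and $\sigma_m$): the cluster poset is $k$ length-$m$ chains glued at single vertices whose heights within each chain are determined solely by $a=\sigma_1$ and $b=\sigma_m$, so its isomorphism type, hence $d^\sigma_k$, hence $\omega_\sigma(u,z)$ and $P_\sigma(u,z)=1/\omega_\sigma(u,z)$, is unaffected by the interior entries of $\sigma$. Two harmless slips worth fixing: the shared vertex sits at height $b$ in chain $j$ and height $a$ in chain $j{+}1$ (your middle paragraph states it the other way round), and the superscript $\tau$ in $\pi_{\varsigma^\tau_a+j(m-1)}$ in your last paragraph should be $\sigma$; neither affects the argument.
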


Although we do not have a closed formula for $d^\sigma_k$ in general, it is clear that $d^\sigma_1=1$ and $$d^\sigma_2=\binom{\sigma_1+\sigma_m-2}{\sigma_1-1}\binom{2m-\sigma_1-\sigma_m}{m-\sigma_m}$$ for $\sigma\in\N_m$.
It will be convenient to define
$$f(a,b)=\binom{a+b-2}{a-1}\binom{2m-a-b}{m-b}$$
for $1\le a,b\le m$, so that $d^\sigma_2=f(\sigma_1,\sigma_m)$ for any $\sigma\in\N_m$.

We start by proving that another conjecture of Nakamura~\cite[Conjecture~6]{Nak} holds in the special case of non-overlapping patterns.

\begin{lemma}\label{lem:strong}
Two non-overlapping patterns are c-Wilf-equivalent iff they are strongly c-Wilf-equivalent.
\end{lemma}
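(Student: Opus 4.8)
The plan is to exploit the fact that, for a non-overlapping pattern $\sigma$, clusters exist only in lengths of the form $k(m-1)+1$, so the cluster generating function $R_\sigma(t,z)$ is supported on a very sparse set of powers of $z$. This sparsity makes the whole sequence $(d^\sigma_k)_{k\ge1}$ recoverable from the single specialization $u=0$, and since by~\eqref{eq:omeganol} the function $\omega_\sigma(u,z)$ depends only on that sequence, the two notions of c-Wilf-equivalence must coincide.

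The ``if'' direction is immediate from the definitions: if $P_\sigma(u,z)=P_\tau(u,z)$, then in particular $P_\sigma(0,z)=P_\tau(0,z)$. For the ``only if'' direction, suppose $\sigma,\tau\in\N_m$ are c-Wilf-equivalent, i.e. $\omega_\sigma(0,z)=\omega_\tau(0,z)$. Setting $u=0$ in~\eqref{eq:omeganol} gives
$$\omega_\sigma(0,z)=1-z-\sum_{k\ge1}(-1)^k d^\sigma_k\frac{z^{k(m-1)+1}}{(k(m-1)+1)!},$$
and the analogous identity for $\tau$. Because $m\ge3$, the exponents $k(m-1)+1$ for $k\ge1$ are pairwise distinct (indeed strictly increasing in $k$), so equating the coefficient of $z^{k(m-1)+1}$ on both sides of $\omega_\sigma(0,z)=\omega_\tau(0,z)$ yields $d^\sigma_k=d^\tau_k$ for every $k\ge1$.

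Now feed $d^\sigma_k=d^\tau_k$ back into~\eqref{eq:omeganol}: the two power series in $u$ and $z$ agree term by term, so $\omega_\sigma(u,z)=\omega_\tau(u,z)$ and hence $P_\sigma(u,z)=1/\omega_\sigma(u,z)=1/\omega_\tau(u,z)=P_\tau(u,z)$, proving that $\sigma$ and $\tau$ are strongly c-Wilf-equivalent. There is essentially no obstacle in this argument; the one point worth stressing is that it is special to non-overlapping patterns, where the support of $R_\sigma(t,z)$ in the variable $z$ is spread out enough for the cluster numbers to be read off one at a time from the avoidance generating function. In particular, Lemma~\ref{lem:1m} is not needed for this proof.
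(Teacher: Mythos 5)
Your proof is correct and takes essentially the same approach as the paper: both read off the cluster numbers $d^\sigma_k$ one at a time from the coefficients of $\omega_\sigma(0,z)$ using equation~\eqref{eq:omeganol}, exploiting the fact that for non-overlapping patterns the exponents $k(m-1)+1$ are distinct, and then substitute back to conclude $\omega_\sigma(u,z)=\omega_\tau(u,z)$. The only cosmetic difference is that the paper cites Corollary~\ref{cor:nosing} to justify comparing coefficients, while you make the sparseness of the $z$-support explicit; both arguments identify the same mechanism.
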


\begin{proof}
Since $P_\sigma(u,z)=1/\omega_\sigma(u,z)$, two patterns $\sigma$ and $\tau$ are c-Wilf-equivalent iff $\omega_\sigma(0,z)=\omega_\tau(0,z)$,
and they are strongly c-Wilf-equivalent iff $\omega_\sigma(u,z)=\omega_\tau(u,z)$.
By Corollary~\ref{cor:nosing}, $\omega_\sigma(0,z)$ is analytic at $z=0$.
If $\omega_\sigma(0,z)=\omega_\tau(0,z)$, then the coefficients of the series expansions of these functions at $z=0$ coincide, so by equation~\eqref{eq:omeganol}, $d^\sigma_k=d^\tau_k$ for all $k\ge1$. But then
$\omega_\sigma(u,z)=\omega_\tau(u,z)$, so $\sigma$ and $\tau$ are strongly c-Wilf-equivalent.
\end{proof}

It is known~\cite{EliNoy} that there is one c-Wilf-equivalence class of non-overlapping patterns of length~$3$, represented by $132$, and two classes of non-overlapping patterns of length $4$, represented by $1342$ and $1243$.
For non-overlapping patterns of length $m\ge5$, we now show that the number of equivalence classes is at most the size of the set
$$\Delta_m=\{(a,b): 1\le a<b\le m-1,\ a+b\le m+1\}.$$

\begin{proposition}\label{prop:numbernol}
The number of (strong) c-Wilf-equivalence classes of non-overlapping patterns of length $m\ge 5$ is at most
$$\left\lfloor\frac{m^2-4}{4}\right\rfloor.$$
\end{proposition}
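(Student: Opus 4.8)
The plan is to bound the number of (strong) c-Wilf-equivalence classes of non-overlapping patterns of length $m\ge 5$ by exhibiting, for each such pattern, a canonical representative whose ``type'' lies in the set $\Delta_m=\{(a,b): 1\le a<b\le m-1,\ a+b\le m+1\}$, and then to count $|\Delta_m|$. The key structural input is Lemma~\ref{lem:1m}: a non-overlapping $\sigma\in\S_m$ is determined, up to strong c-Wilf-equivalence, by the pair $(\sigma_1,\sigma_m)$. So the number of equivalence classes is at most the number of pairs $(a,b)$ with $a\ne b$ that actually arise as $(\sigma_1,\sigma_m)$ for some non-overlapping $\sigma$, taken up to the symmetries of reversal and complementation.

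First I would use the normalization already recorded in the excerpt: every $\sigma\in\S_m$ is strongly c-Wilf-equivalent to a pattern with $\sigma_1<\sigma_m$ and $\sigma_1+\sigma_m\le m+1$, so we may always take $(a,b)=(\sigma_1,\sigma_m)$ with $1\le a<b\le m$ and $a+b\le m+1$. Next I would argue that the extreme values $b=m$ and $a+b=m+1$ with $b=m$ need not be counted separately or are excluded for non-overlapping patterns of length $m\ge 5$. Concretely: if $\sigma_m=m$, then $\red(\sigma_{m-1}\sigma_m)$ is an ascent, and one checks that a non-overlapping pattern with $\sigma_1<\sigma_m=m$ forces an overlap of length $1$ unless $\sigma$ is monotone — but the monotone pattern $12\dots m$ has $\O_\sigma\ne\{m-1\}$, hence is not non-overlapping; so $\sigma_m=m$ cannot occur for a non-overlapping pattern once we are in the normalized range with $\sigma_1<\sigma_m$. (Symmetrically, after normalization $\sigma_1=1$ together with $\sigma_m=m$ is impossible.) This shows that every non-overlapping pattern of length $m\ge 5$ is strongly c-Wilf-equivalent to one of type $(a,b)$ with $1\le a<b\le m-1$ and $a+b\le m+1$, i.e. $(a,b)\in\Delta_m$.

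Then I would simply count $|\Delta_m|$. For each fixed $a$ with $1\le a\le \lfloor (m-1)/2\rfloor$ or so, the values of $b$ run over $a+1\le b\le \min\{m-1,\ m+1-a\}$; summing these counts over $a$ and simplifying the arithmetic gives $|\Delta_m|=\lfloor (m^2-4)/4\rfloor$. I would present this as a short direct computation, splitting into the cases $m$ even and $m$ odd (the floor function reflects exactly this parity split): for $m$ even one gets $(m^2-4)/4$, and for $m$ odd one gets $(m^2-5)/4=\lfloor (m^2-4)/4\rfloor$. Since by Lemma~\ref{lem:strong} the c-Wilf-equivalence classes coincide with the strong ones, this yields the bound stated in the proposition.

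The main obstacle is the second step: showing carefully that the boundary pairs with $b=m$ (and dually $a=1$ paired with $b=m$) really are excluded by the non-overlapping condition, so that we may shrink the index set from $\{1\le a<b\le m,\ a+b\le m+1\}$ down to $\Delta_m$. This needs a small case analysis on $\O_\sigma$ using the definition of overlaps — essentially verifying that $\sigma_1<\sigma_m=m$ forces $1\in\O_\sigma$ or $2\in\O_\sigma$ (depending on the internal structure of $\sigma$), contradicting $\O_\sigma=\{m-1\}$ when $m\ge5$. Everything after that is routine counting.
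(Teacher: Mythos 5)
Your overall structure matches the paper's: normalize via reversal/complementation so $(\sigma_1,\sigma_m)$ satisfies $\sigma_1<\sigma_m$ and $\sigma_1+\sigma_m\le m+1$, invoke Lemma~\ref{lem:1m} together with Lemma~\ref{lem:strong} to reduce to counting admissible pairs, exclude the boundary pair, and count $|\Delta_m|$. The counting and the appeals to the two lemmas are fine. But your handling of the one pair that must be excluded — $(\sigma_1,\sigma_m)=(1,m)$, the only pair in $\{1\le a<b\le m,\ a+b\le m+1\}$ with $b=m$ — is wrong.

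You assert that $\sigma_1<\sigma_m=m$ ``forces an overlap of length $1$ unless $\sigma$ is monotone,'' and later that it ``forces $1\in\O_\sigma$ or $2\in\O_\sigma$.'' Neither is true. For instance $\sigma=12435\in\S_5$ has $\sigma_1=1$, $\sigma_5=5$, and $\O_\sigma=\{3,4\}$; neither $1$ nor $2$ is an overlap, yet $\sigma$ is still not non-overlapping because $3=m-2\in\O_\sigma$. The correct and much simpler reason (the one the paper uses) is this: if $\sigma_1=1$ then $\sigma_1<\sigma_2$, and if $\sigma_m=m$ then $\sigma_{m-1}<\sigma_m$, so both $\red(\sigma_1\sigma_2)$ and $\red(\sigma_{m-1}\sigma_m)$ equal $12$; by the definition of overlaps this gives $m-2\in\O_\sigma$, so $\O_\sigma\ne\{m-1\}$ and $\sigma$ is not non-overlapping. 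Replace your incorrect claim about small overlaps with this ascent-at-both-ends observation forcing $m-2\in\O_\sigma$, and your proof becomes correct and essentially identical to the paper's.
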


\begin{proof}
By Lemma~\ref{lem:strong}, c-Wilf and strong c-Wilf-equivalence classes coincide for non-overlapping patterns.
By Lemma~\ref{lem:1m}, the equivalence class of a pattern $\sigma\in\N_m$ is determined $\sigma_1$ and $\sigma_m$.
Since reversal and complementation preserve equivalence classes, each class has at least one pattern
with $\sigma_1<\sigma_m$ and $\sigma_1+\sigma_m\le m+1$. Additionally,
there is no non-overlapping pattern with $\sigma_1=1$ and $\sigma_m=m$, because such a pattern would start and end with an ascent, so it would have $m-2\in\O_\sigma$.
This shows that each class contains a pattern $\sigma$ with $(\sigma_1,\sigma_m)\in\Delta_m$, and thus the number of
classes is bounded from above by $|\Delta_m|$. By counting the number of pairs for each fixed value of $\sigma_1$, we get
$$|\Delta_m|=(m-2)+(m-3)+(m-5)+(m-7)+\dots=\begin{cases} \frac{m^2-4}{4} & \mbox{if $m$ is even},\\ \frac{m^2-5}{4} & \mbox{if $m$ is odd}.\end{cases}$$
\end{proof}

We conjecture that the formula in Proposition~\ref{prop:numbernol} is not just an upper bound, but the exact number of c-Wilf-equivalence classes of non-overlapping patterns of length $m\ge5$. In this direction, it is easy to show
that for every $(a,b)\in \Delta_m$ there is a pattern $\sigma\in\N_m$ with $\sigma_1=a$ and $\sigma_m=b$. Take, for example,
$$\sigma=a(a+1)(a+2)\dots\widehat{b}\dots(m-1)12\dots(a-1)mb,$$
where $\widehat{b}$ indicates that $b$ is missing.
This pattern is clearly non-overlapping if $a=1$. If $a\neq1$, then the only way for two occurrences of $\sigma$ to overlap is if the descent $mb$ of the first occurrence coincides with the descent $(m-1)1$ of the second occurrence.
However, this is impossible because the descent $mb$ forms a pattern $132$ with the entry preceding it, while the descent $(m-1)1$ forms a pattern $231$ with the entry preceding it.

To prove that the formula in Proposition~\ref{prop:numbernol} is exact, it remains to be shown that if
$\sigma,\tau\in\N_m$ are such that the pairs $(\sigma_1,\sigma_m)$ and $(\tau_1,\tau_m)$ are different and belong to $\Delta_m$, then $d^\sigma_k\neq d^\tau_k$ for some $k$.
This would imply that $\omega_\sigma(z)\neq \omega_{\tau}(z)$ and thus $\sigma$ and $\tau$ are not c-Wilf-equivalent.
Note, however, that there are examples such as $\sigma=23567184$ and $\tau=34671285$, which satisfy $d^\sigma_2=d^\tau_2$, even though $d^\sigma_3\neq d^\tau_3$. There are also pairs of longer patterns for which $d^\sigma_2>d^\tau_2$
but $d^\sigma_3<d^\tau_3$.

\ms

Finding the most and the least avoided non-overlapping patterns is closely related to finding the extremal values that $d^\sigma_2$ can take for $\sigma\in\N_m$. For this purpose, we take a closer look at the function $f(a,b)$.

\begin{lemma}\label{lem:fab}
\ben
\item[(i)] For $1\le a<b\le m-1$, \ $f(a,b)>f(a,b+1)$.
\item[(ii)] For $2\le a\le m/2$, \ $f(a-1,a)>f(a,a+1)$.
\item[(iii)] For $2\le a<b\le m$, \ $f(a,b)>f(a-1,b)$.
\een
\end{lemma}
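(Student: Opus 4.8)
The plan is to prove all three inequalities by explicit manipulation of the ratio of consecutive values of $f$, using the fact that $f(a,b)=\binom{a+b-2}{a-1}\binom{2m-a-b}{m-b}$ is a product of two binomial coefficients, one of which depends on $a+b$ only through its "left" argument and the other only through its complement. Throughout I will write things in terms of factorials so the cancellation is transparent.

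For part (i), I would compute the ratio
\[
\frac{f(a,b)}{f(a,b+1)}=\frac{\binom{a+b-2}{a-1}}{\binom{a+b-1}{a-1}}\cdot\frac{\binom{2m-a-b}{m-b}}{\binom{2m-a-b-1}{m-b-1}}
=\frac{b-1}{a+b-1}\cdot\frac{(2m-a-b)(2m-a-b)}{(m-b)(m-a)}\ ,
\]
after simplifying the two binomial ratios; then I must check this exceeds $1$ for $1\le a<b\le m-1$. Here the constraint $b\le m-1$ ensures $m-b\ge1$ so the second binomial coefficient is genuinely present, and I expect the factor $(2m-a-b)^2/((m-b)(m-a))$ to be at least $1$ by AM–GM (since $2m-a-b=(m-a)+(m-b)$) while $(b-1)/(a+b-1)$ can be small; so the real work is to show the product is still $>1$, which should follow because $(m-a)+(m-b)\ge 2$ gives $(2m-a-b)^2\ge (m-a)(m-b)+ (2m-a-b)$ and the surplus compensates. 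Part (iii) is analogous and in fact somewhat easier: the ratio $f(a,b)/f(a-1,b)$ simplifies to a product of the form $\frac{a+b-2}{a-1}\cdot\frac{m-a+1}{2m-a-b+1}$ after cancellation, and one checks directly that this is $>1$ in the range $2\le a<b\le m$ (intuitively, decreasing $a$ shrinks the first binomial faster than it grows the second).

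Part (ii) is the one I expect to be the genuine obstacle, because it is not a "unit step" in a single argument: it compares $f(a-1,a)$ with $f(a,a+1)$, where \emph{both} arguments increase. The cleanest route is to factor the comparison through part (iii): write $f(a-1,a)>f(a,a+1)$ as a chain, for instance $f(a-1,a)$ versus $f(a-1,a+2)$ versus $f(a,a+1)$ — but one has to be careful that the intermediate comparisons have the right direction, since (i) pushes $b$ down and (iii) pushes $a$ up. Concretely $f(a-1,a)>f(a-1,a+1)>f(a-1,a+2)$ by two applications of (i), and then I would like $f(a-1,a+2)\ge f(a,a+1)$; but $a+(a+1)=(a-1)+(a+2)$, so these two have the \emph{same} value of $a+b$, hence the same product of "outer" parameters, and the comparison reduces to $\binom{2a-1}{a-2}\binom{2m-2a-1}{m-a-2}$ versus $\binom{2a-1}{a-1}\binom{2m-2a-1}{m-a-1}$ — i.e. moving one unit "toward the center" in each factor. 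Since $a\le m/2$ forces $a-2< a-1\le (2a-1)/2$ and likewise $m-a-2<m-a-1\le(2m-2a-1)/2$, each binomial coefficient strictly increases, so actually $f(a-1,a+2)<f(a,a+1)$ — meaning I must instead route through $f(a,a)$ or $f(a-1,a-1)$ type terms, or compare $f(a-1,a)$ with $f(a,a+1)$ by writing the ratio directly.

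So in fact for (ii) I would abandon the chaining idea and just compute $f(a,a+1)/f(a-1,a)$ head-on. Using the factorial expressions,
\[
\frac{f(a,a+1)}{f(a-1,a)}=\frac{\binom{2a-1}{a-1}}{\binom{2a-3}{a-2}}\cdot\frac{\binom{2m-2a-1}{m-a-1}}{\binom{2m-2a+1}{m-a+1}}
=\frac{(2a-1)(2a-2)}{(a-1)a}\cdot\frac{(m-a+1)(m-a)}{(2m-2a+1)(2m-2a)}\ ,
\]
and simplifying the first factor to $\frac{2(2a-1)}{a}$ and the second to $\frac{(m-a+1)}{2(2m-2a+1)}$, I need to show this product is $<1$, i.e. $\frac{2(2a-1)}{a}\cdot\frac{m-a+1}{2(2m-2a+1)}<1$, equivalently $(2a-1)(m-a+1)<a(2m-2a+1)$, which expands to $-m+a-1<0$, true whenever $a<m+1$ — so in fact (ii) holds in a wider range than stated, and the hypothesis $a\le m/2$ is only needed to keep $(a-1,a)$ and $(a,a+1)$ inside $\Delta_m$. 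The main obstacle, then, is purely bookkeeping: getting the three binomial-ratio simplifications right and keeping the strict inequalities strict, which I will do by writing every coefficient as an explicit factorial quotient before cancelling, and I will present (i) and (iii) first since their ratio computations are the templates for (ii).
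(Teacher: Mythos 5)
Your overall plan — compare $f$ at adjacent arguments by simplifying ratios of binomial coefficients — is exactly what the paper does, and your treatment of~(iii) by direct ratio is correct (the paper instead derives~(iii) from~(i) via the symmetry $f(a,b)=f(m{+}1{-}b,m{+}1{-}a)$, which saves a computation). However there are two concrete arithmetic errors that break the argument.

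In part~(i) the ratio is wrong. The correct simplification is
\[
\frac{f(a,b)}{f(a,b+1)}=\frac{\binom{a+b-2}{a-1}}{\binom{a+b-1}{a-1}}\cdot\frac{\binom{2m-a-b}{m-b}}{\binom{2m-a-b-1}{m-b-1}}
=\frac{b}{a+b-1}\cdot\frac{2m-a-b}{m-b},
\]
not $\frac{b-1}{a+b-1}\cdot\frac{(2m-a-b)^2}{(m-b)(m-a)}$. Both factors are off: the numerator of the first should be $b$, and the second factor is a single ratio $\frac{2m-a-b}{m-b}$, with no $(m-a)$ or squaring. With the correct ratio the inequality $\frac{f(a,b)}{f(a,b+1)}>1$ is equivalent to $\frac{2m-a-b}{m-b}>\frac{a+b-1}{b}$, and subtracting~$1$ from each side reduces this to $\frac{m-a}{m-b}>\frac{a-1}{b}$, which is immediate from $a<b\le m-1$. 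Your AM--GM detour was trying to patch a formula that is not the right one, so that whole thread should be discarded.

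In part~(ii) your final expansion is also wrong, and this one leads you to a false claim. The inequality $(2a-1)(m-a+1)<a(2m-2a+1)$ expands to $m+1-2a>0$, i.e.\ $2a<m+1$, not to $a<m+1$. In particular the hypothesis $a\le m/2$ is \emph{not} just there to keep $(a-1,a)$ and $(a,a+1)$ inside $\Delta_m$; it is genuinely needed for the inequality. A quick counterexample: with $m=6$, $a=4$ one has $f(3,4)=\binom{5}{2}\binom{5}{2}=100$ but $f(4,5)=\binom{7}{3}\binom{3}{1}=105$, so $f(a-1,a)<f(a,a+1)$. Your direct-ratio method for~(ii) is exactly the paper's (the paper rewrites the equivalent inequality as $\frac{2m-2a+1}{m-a+1}>\frac{2a-1}{a}$ and subtracts~$2$ from both sides, reaching the same condition $2a<m+1$), so once the expansion is corrected you recover the lemma as stated. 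The lengthy chaining discussion at the start of~(ii) is a dead end you already abandon, and can be omitted.
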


\begin{proof}
The inequality in part (i), $$\binom{a+b-2}{a-1}\binom{2m-a-b}{m-b}>\binom{a+b-1}{a-1}\binom{2m-a-b-1}{m-b-1},$$  is equivalent to $\frac{2m-a-b}{m-b}>\frac{a+b-1}{b}$.
Subtracting $1$ from both sides, this is equivalent to $\frac{m-a}{m-b}>\frac{a-1}{b}$, which is clearly true because $a<b$.

The inequality in part (ii), $$\binom{2a-3}{a-2}\binom{2m-2a+1}{m-a}>\binom{2a-1}{a-1}\binom{2m-2a-1}{m-a-1},$$ is equivalent to $\frac{2m-2a+1}{m-a+1}>\frac{2a-1}{a}$.
Subtracting $2$ from both sides, this follows from the fact that $2a\le m$.
 Part (iii) follows from part~(i) using the symmetry $f(a,b)=f(m+1-b,m+1-a)$.
\end{proof}

Table~\ref{tab:d} illustrates the order relationships proved in Lemma~\ref{lem:fab} among the values $f(a,b)$ for $(a,b)\in \Delta_m$, with arrows pointing to the larger value in each pair.
We are interested in the two largest and the two smallest values of $f(a,b)$.

\begin{table}[htb]
$$\begin{array}{cccccccccccc}
f(1,2) &\leftarrow& f(1,3) &\leftarrow& f(1,4)& \leftarrow&\dots&\leftarrow & f(1,m-2) &\leftarrow & f(1,m-1) \\
 &\nwarrow& \downarrow& &\downarrow && & & \downarrow & & \downarrow \\
 && f(2,3) &\leftarrow& f(2,4)& \leftarrow&\dots&\leftarrow & f(2,m-2) &\leftarrow & f(2,m-1) \\
 && &\nwarrow& \downarrow & & &  & \downarrow &&  \\
 && && f(3,4) & \leftarrow & \dots&\leftarrow & f(3,m-2) &&\\
 && &&& \ddots & \dots& \Ddots &  &&\\
 \end{array}$$
\caption{\label{tab:d} Order relationships among the values $f(a,b)$ for $(a,b)\in \Delta_m$, where $m\ge4$.}
\end{table}

\begin{proposition}\label{prop:fab2}
For $m\ge5$ and $(a,b)\in \Delta_m\setminus\{(1,2),(2,3),(1,m-2),(1,m-1)\}$, we have
$$f(1,2)>f(2,3)>f(a,b)>f(1,m-2)>f(1,m-1).$$
The two largest and smallest values that $d^\sigma_2$ can take for $\sigma\in\N_m$ are given in Table~\ref{tab:d2}.
\end{proposition}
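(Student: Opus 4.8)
The plan is to prove the chain of inequalities by a short combinatorial induction using only parts (i), (ii), (iii) of Lemma~\ref{lem:fab}, exactly as encoded in Table~\ref{tab:d}. Reading the table, the entries along the top row increase to the right (part (i)), so the global maximum over all of $\Delta_m$ is attained in the bottom-left corner of the displayed triangle, while the two largest values we care about will come from the leftmost few entries. First I would establish the extreme left of the picture: by part (ii), $f(1,2)>f(2,3)$, and by part (iii) applied with $a=2$, $f(2,3)>f(1,3)$; combining with part (i) this already shows $f(1,2)$ and $f(2,3)$ dominate the entire first two rows. More carefully, I want to show that for every $(a,b)\in\Delta_m$ with $(a,b)\notin\{(1,2),(2,3)\}$ one has $f(2,3)>f(a,b)$: if $a=1$ then $b\ge3$ and $f(2,3)>f(1,3)\ge f(1,b)$ by parts (iii) and (i); if $a=2$ then $b\ge4$ and $f(2,3)>f(2,4)\ge f(2,b)$ by part (i); if $a\ge3$ then iterate part (iii) down the column to reach row $2$, $f(a,b)<f(a-1,b)<\dots<f(2,b)\le f(2,3)$ — but I must be careful that $(2,b)\in\Delta_m$, which holds since $a+b\le m+1$ forces $b\le m-1$ and $2<b$. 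This handles the two largest values.

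Next I would do the symmetric argument for the two smallest values, using the symmetry $f(a,b)=f(m+1-b,m+1-a)$ noted in the proof of Lemma~\ref{lem:fab}(iii). Under this involution, $\Delta_m$ is not quite preserved (the constraint $a+b\le m+1$ is symmetric but $a<b$ versus $a>b$ flips), so instead I would argue directly from Table~\ref{tab:d}: within each row, $f$ decreases to the right, so the row-minima are the rightmost entries $f(a,m-1)$ for each admissible $a$ (when $a+(m-1)\le m+1$, i.e. $a\le2$), together with shorter rows ending earlier. The genuinely smallest values will be along the right edge of the top row: $f(1,m-1)$ and $f(1,m-2)$. Part (i) gives $f(1,m-2)>f(1,m-1)$, so $f(1,m-1)$ is the unique minimum of the top row. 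To see it is the global minimum and $f(1,m-2)$ the second smallest, I would show that every other $(a,b)\in\Delta_m$ satisfies $f(a,b)>f(1,m-2)$. For $a=1$ this is part (i) since $b\le m-3$. For $a\ge2$: push up the column via part (iii) to get $f(a,b)>f(1,b)$, and then if $b\le m-2$ apply part (i) to reach $f(1,b)\ge f(1,m-2)$; the only remaining case is $a\ge2$ with $b=m-1$, which the constraint $a+b\le m+1$ forces $a\le2$, hence $a=2$, $b=m-1$, and here I would compare $f(2,m-1)$ against $f(1,m-2)$ directly — by part (iii), $f(2,m-1)>f(1,m-1)$, but I need the stronger $f(2,m-1)>f(1,m-2)$, which follows from the symmetry $f(2,m-1)=f(2,m-1)$... more cleanly, $f(2,m-1)=f(m+1-(m-1),m+1-2)=f(2,m-1)$ is a fixed point, so instead I use $f(2,m-1)>f(1,m-1)$ is not enough and I would instead bound $f(2,m-1)$ below by noting $f(2,m-1)=\binom{m-1}{1}\binom{m-1}{1}=(m-1)^2$ while $f(1,m-2)=\binom{m-2}{0}\binom{m-1}{2}=\binom{m-1}{2}$, and $(m-1)^2>\binom{m-1}{2}$ for $m\ge5$.

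Finally, having ordered the four extreme pairs $(1,2),(2,3),(1,m-2),(1,m-1)$ and sandwiched every other pair strictly between $f(2,3)$ and $f(1,m-2)$, the displayed chain of inequalities follows, and since $d^\sigma_2=f(\sigma_1,\sigma_m)$ for $\sigma\in\N_m$, and every equivalence class of non-overlapping patterns contains a representative with $(\sigma_1,\sigma_m)\in\Delta_m$ (as shown in the proof of Proposition~\ref{prop:numbernol}), the two largest and two smallest values of $d^\sigma_2$ are the four displayed evaluations $f(1,2)$, $f(2,3)$, $f(1,m-2)$, $f(1,m-1)$; computing these four binomial products explicitly and recording them is the content of Table~\ref{tab:d2}. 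I expect the main obstacle to be the boundary bookkeeping: making sure that each intermediate pair I route through (such as $(2,b)$ or $(1,b)$) actually lies in $\Delta_m$ so that the relevant part of Lemma~\ref{lem:fab} applies, and handling the small overlap cases like $(2,m-1)$ where the column-climbing argument terminates one step early and must be finished by a direct binomial estimate valid for $m\ge5$ (the hypothesis $m\ge5$ is what rules out the degenerate coincidences that occur for $m=4$).
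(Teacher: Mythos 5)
Your argument for the two smallest values is correct and matches the paper's in spirit, but the argument for the two largest values contains a genuine sign error. For $a\ge3$ you claim the chain $f(a,b)<f(a-1,b)<\dots<f(2,b)\le f(2,3)$ by ``iterating part (iii) down the column.'' But Lemma~\ref{lem:fab}(iii) says $f(a,b)>f(a-1,b)$: moving to a \emph{smaller} $a$ in a fixed column makes $f$ \emph{smaller}, so your chain runs in exactly the wrong direction, and in fact $f(a,b)>f(2,b)$. This is not a bookkeeping slip you can patch by reindexing; the column route simply cannot establish $f(a,b)<f(2,3)$. The correct route for $a\ge3$ goes along the row and then down the diagonal: by part~(i), $f(a,b)\le f(a,a+1)$ (since $b\ge a+1$), and then by iterating part~(ii), $f(a,a+1)<f(a-1,a)<\dots<f(2,3)$. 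Each use of part~(ii) is legitimate because $(a,a+1)\in\Delta_m$ forces $2a+1\le m+1$, i.e.\ $a\le m/2$, which is exactly the hypothesis of~(ii). You already invoke~(ii) once at the very start to get $f(1,2)>f(2,3)$, but the crucial point is that~(ii) must be \emph{iterated} for the $a\ge3$ cases, whereas you tried to iterate~(iii) instead. The paper's proof is terse --- essentially ``clear from Lemma~\ref{lem:fab} and the table'' plus the one binomial comparison $f(1,m-2)=\binom{m+1}{2}<(m-1)^2=f(2,m-1)$ --- but this diagonal descent is what the table is encoding via the $\nwarrow$ arrows.

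A secondary, non-fatal issue: you compute $f(1,m-2)=\binom{m-2}{0}\binom{m-1}{2}=\binom{m-1}{2}$, but plugging $a=1$, $b=m-2$ into $f(a,b)=\binom{a+b-2}{a-1}\binom{2m-a-b}{m-b}$ gives $\binom{m-3}{0}\binom{m+1}{2}=\binom{m+1}{2}$, as the paper records. Your concluding inequality should therefore be $(m-1)^2>\binom{m+1}{2}$ for $m\ge5$, which is still true (at $m=5$ it reads $16>15$ and the gap grows), so the conclusion survives, but the displayed binomial should be corrected. Everything else in the lower-end argument --- climbing columns via~(iii), then moving right via~(i), with the single exceptional case $(2,m-1)$ finished off by the direct comparison --- is a correct and faithful expansion of what the paper leaves to the reader.
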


\begin{table}[htb]
\renewcommand{\arraystretch}{1.4}
\centering
\bt{c|c|c}
& value of $d^\sigma_2$ & \renewcommand{\arraystretch}{1} \bt{c} attained only when, up to reversal \\ and complementation, $\sigma$ satisfies\et \\
\hline
largest value & $\binom{2m-3}{m-2}$ & $\sigma_1=1$ and $\sigma_m=2$ \\
\hline
second largest value & $3\binom{2m-5}{m-3}$ & $\sigma_1=2$ and $\sigma_m=3$ \\
\hline
second smallest value & $\binom{m+1}{2}$ & $\sigma_1=1$ and $\sigma_m=m-2$\\
\hline
smallest value & $m$ & $\sigma_1=2$ and $\sigma_m=m-1$
\et
\caption{\label{tab:d2} The largest and smallest values of $d^\sigma_2$ for $\sigma\in\N_m$, where $m\ge5$.}
\end{table}

\begin{proof}
It is clear from Lemma~\ref{lem:fab} (see also Table~\ref{tab:d}) that the minimum and the two largest values of $f$ over $\Delta_m$ occur at the stated coordinates.
The reason that second smallest value is $f(1,m-2)$ rather than $f(2,m-1)$ is that $$f(1,m-2)=\binom{m+1}{2}<(m-1)^2=f(2,m-1)$$ for $m\ge5$.

The second statement follows using that, up to reversal and complementation, every $\sigma\in\N_m$ satisfies $(\sigma_1,\sigma_m)\in\Delta_m$, and $d^\sigma_2=f(\sigma_1,\sigma_m)$.
\end{proof}

It is now immediate that
\beq\label{eq:maxf} \underset{a+b\le m+1}{\max_{1\le a<b\le m}} f(a,b)=\max_{(a,b)\in\Delta_m\cup\{(1,m)\}} f(a,b)=f(1,2)=\binom{2m-3}{m-2},\eeq
which was used in the proof of Lemma~\ref{lem:2l}.

Using equation~\eqref{eq:snol}, Proposition~\ref{prop:boundsomega} can be reformulated as follows in the special case of non-overlapping patterns.

\begin{proposition}\label{prop:boundsomeganol}
For every $\sigma\in\N_m$ and $0<z\le C$,
$$1-z+\frac{z^m}{m!}-d^\sigma_2\frac{z^{2m-1}}{(2m-1)!}<\omega_{\sigma}(z)<  1-z+\frac{z^m}{m!}-d^\sigma_2\frac{z^{2m-1}}{(2m-1)!}+d^\sigma_3\frac{z^{3m-2}}{(3m-2)!}.$$
\end{proposition}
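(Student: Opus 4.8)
The plan is to specialize the general bounds of Proposition~\ref{prop:boundsomega} to the non-overlapping case and simply rewrite the relevant terms using the explicit shape of the cluster generating functions. Recall that for $\sigma\in\N_m$ we have $\O_\sigma=\{m-1\}$, so every $k$-cluster has length exactly $k(m-1)+1$, and equation~\eqref{eq:snol} gives
$$s^\sigma_k(z)=d^\sigma_k\,\frac{z^{k(m-1)+1}}{(k(m-1)+1)!}.$$
In particular $s^\sigma_1(z)=z^m/m!$ (consistent with \eqref{eq:s12}, since $d^\sigma_1=1$), $s^\sigma_2(z)=d^\sigma_2\,z^{2m-1}/(2m-1)!$, and $s^\sigma_3(z)=d^\sigma_3\,z^{3m-2}/(3m-2)!$.

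Next I would invoke Proposition~\ref{prop:boundsomega}, which for every $\sigma\in\S_m$ and $0<z\le C$ asserts
$$1-z+\frac{z^m}{m!}-s^\sigma_2(z)<\omega_\sigma(z)<1-z+\frac{z^m}{m!}-s^\sigma_2(z)+s^\sigma_3(z).$$
Substituting the two displayed expressions for $s^\sigma_2(z)$ and $s^\sigma_3(z)$ into this chain of inequalities yields exactly the claimed statement, so there is essentially nothing left to prove beyond the substitution. The hypothesis $m\ge3$ implicit in the definition of $\N_m$ guarantees that Proposition~\ref{prop:boundsomega} applies.

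There is no real obstacle here: the proposition is a purely cosmetic reformulation of Proposition~\ref{prop:boundsomega} tailored to the non-overlapping situation, and the only "work" is recording that in this case the single overlap $\ell=m-1$ forces the clean closed forms above. The slight subtlety worth stating explicitly is that $\N_m\subseteq\S_m$, so Proposition~\ref{prop:boundsomega} and Proposition~\ref{prop:decreasing} apply verbatim; I would mention this and then write out the one-line substitution. I would present it as: "\emph{Proof.} This is the specialization of Proposition~\ref{prop:boundsomega} obtained by substituting the values of $s^\sigma_2(z)$ and $s^\sigma_3(z)$ given by \eqref{eq:snol}, using $\O_\sigma=\{m-1\}$. $\qquad\square$"
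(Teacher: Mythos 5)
Your proposal is correct and is precisely the paper's argument: the paper states, right before the proposition, that it is obtained ``Using equation~\eqref{eq:snol}'' to ``reformulate'' Proposition~\ref{prop:boundsomega} in the non-overlapping case, and gives no further proof. Your substitution of $s^\sigma_2(z)=d^\sigma_2 z^{2m-1}/(2m-1)!$ and $s^\sigma_3(z)=d^\sigma_3 z^{3m-2}/(3m-2)!$ into the bounds of Proposition~\ref{prop:boundsomega} is exactly what is intended.
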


To warm up for the proof of Theorem~\ref{thm:nol}, which is the main result of this section, we use the above bounds to give a simpler proof of a theorem from~\cite{EliNoy2}, which is a special case of Theorem~\ref{thm:main}.

\begin{theorem}[\cite{EliNoy2}]\label{thm:monvsnonover}
For every $\sigma\in\N_m$, there exists $n_0$ such that
$$\alpha_n(\sigma)<\alpha_n(12\dots m)$$ for all $n\ge n_0$.
\end{theorem}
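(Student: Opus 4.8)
The plan is to show that $\rho_\sigma < \rho_{12\dots m}$ for every non-overlapping $\sigma \in \N_m$, which by Proposition~\ref{prop:lim} and the Exponential Growth Formula is equivalent to the statement of the theorem. Since both $\rho_\sigma^{-1}$ and $\rho_{12\dots m}^{-1}$ are, by Corollary~\ref{cor:smallestzero}, the smallest positive zeroes of $\omega_\sigma(z)$ and $\omega_{12\dots m}(z)$ respectively, and both functions equal $1$ at $z=0$, it suffices to prove that $\omega_{12\dots m}(z) < \omega_\sigma(z)$ for $0 < z < c$, where $c$ is the smallest positive zero of $1 - z + z^4/24$. (We may assume $m \ge 4$ since $m=3$ gives only the pattern $132$, covered by \cite{EliNoy}; and by Corollary~\ref{cor:c1} the relevant zeroes lie in $(1,c)$.)

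First I would invoke Proposition~\ref{prop:ineqmon} for an upper bound on $\omega_{12\dots m}(z)$ and the lower bound in Proposition~\ref{prop:boundsomeganol} for $\omega_\sigma(z)$. Comparing these, the desired inequality $\omega_{12\dots m}(z) < \omega_\sigma(z)$ reduces to
$$d^\sigma_2 \frac{z^{2m-1}}{(2m-1)!} < \frac{z^{m+1}}{(m+1)!} - \frac{z^{2m}}{(2m)!}$$
for $0 < z < c$, since the $z^m/m!$ terms cancel. Dividing through by $z^{m+1}/(m+1)!$ and using $z < c$, this follows from
$$d^\sigma_2 \frac{c^{m-2}}{(2m-1)_{m-2}} + \frac{c^{m-1}}{(2m)_{m-1}} < 1,$$
where $(a)_\ell = a(a-1)\cdots(a-\ell+1)$. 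So the whole problem comes down to bounding $d^\sigma_2$.

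The key input here is that $d^\sigma_2 = f(\sigma_1,\sigma_m) \le \binom{2m-3}{m-2}$ by Proposition~\ref{prop:fab2} (equivalently equation~\eqref{eq:maxf}), the maximum being attained at $(\sigma_1,\sigma_m)=(1,2)$. Substituting this worst case, I need
$$\binom{2m-3}{m-2}\frac{c^{m-2}}{(2m-1)_{m-2}} + \frac{c^{m-1}}{(2m)_{m-1}} < 1.$$
The second term is already tiny for $m \ge 4$ (the same estimate $c^{m-1}/(2m)_{m-1} < 0.1$ used in the proof of Theorem~\ref{thm:main} applies verbatim). The first term simplifies: $\binom{2m-3}{m-2}/(2m-1)_{m-2} = \frac{(2m-3)!}{(m-1)!(m-2)!} \cdot \frac{(m)!}{(2m-1)!} = \frac{m(m-1)}{(2m-1)(2m-2)} \cdot \frac{1}{(m-1)!}$ — that is, it equals $\frac{m}{2(2m-1)} \cdot \frac{c^{m-2}}{(m-1)!}$, which is easily seen to be decreasing in $m$ and less than, say, $0.7$ at $m=4$. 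So the whole sum is bounded by something well under $1$ for all $m \ge 4$, and it is decreasing in $m$, so checking the base case $m=4$ suffices. I expect the main (very mild) obstacle to be nothing conceptual but just the bookkeeping of the small-$m$ verification: one must confirm numerically that at $m=4$, with $c \approx 1.051$, the left side is genuinely below $1$ — and indeed $\binom{5}{2} c^2 / (7)_2 = 10 c^2/42 \approx 0.263$ plus $c^3/(8)_3 \approx 1.16/336 \approx 0.003$ is comfortably under $1$. Everything else is a direct chain of inequalities already assembled in the preceding propositions, which is why this gives a "simpler proof" than the general argument of Theorem~\ref{thm:main}.
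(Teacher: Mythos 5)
Your proof is correct and follows essentially the same route as the paper: reduce to $\omega_{12\dots m}(z)<\omega_\sigma(z)$, combine Proposition~\ref{prop:ineqmon} with the lower bound of Proposition~\ref{prop:boundsomeganol}, and close with $d^\sigma_2\le\binom{2m-3}{m-2}$ and a small-$m$ numerical check (the paper works on $(0,C]$ instead of $(0,c)$, an immaterial difference). One small slip: in your "simplification" you used $\frac{1}{(2m-1)_{m-2}}=\frac{m!}{(2m-1)!}$, but the correct identity is $\frac{1}{(2m-1)_{m-2}}=\frac{(m+1)!}{(2m-1)!}$, so your closed form for the first term is missing a factor of $(m+1)$; this does not affect the argument because your direct evaluation at $m=4$ ($10c^2/42\approx 0.263$) bypasses the bad formula, and the true expression $\frac{(m+1)m\,c^{m-2}}{2(m-1)!(2m-1)}$ is still decreasing in $m$ and below $1$.
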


\begin{proof}
As in the proof of Theorem~\ref{thm:main}, it is enough to show that $\omega_{12\dots m}(z)<\omega_\sigma(z)$ for $0<z\le C$, because
this implies that the smallest positive zeroes of these two functions satisfy $\rho_{12\dots m}^{-1}<\rho_\sigma^{-1}$.

Combining Proposition~\ref{prop:ineqmon}, the lower bound in Proposition~\ref{prop:boundsomeganol}, and the fact that $d^\sigma_2\le\binom{2m-3}{m-2}$ by Proposition~\ref{prop:fab2}, it suffices to show that
$$\binom{2m-3}{m-2}\frac{z^{2m-1}}{(2m-1)!}<\frac{z^{m+1}}{(m+1)!}-\frac{z^{2m}}{(2m)!}$$
for $0<z\le C$. This in turn follows from
$$\frac{(m+1)m\,C^{m-2}}{(m-2)!(2m-1)(2m-2)}+\frac{C^{m-1}}{(2m)_{m-1}}<1,$$
which is easy to verify for $m\ge3$.
\end{proof}

\begin{theorem}\label{thm:nol}
For every $\sigma\in\N_m$, there exists $n_0$ such that $$\alpha_n(12\dots(m-2)m(m-1))\le\alpha_n(\sigma)\le\alpha_n(134\dots m2)$$ for all $n\ge n_0$.
\end{theorem}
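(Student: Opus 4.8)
The plan is to follow the template of Theorem~\ref{thm:main} and Theorem~\ref{thm:monvsnonover}. Write $\tau_{\max}=134\dots m2$ and $\tau_{\min}=12\dots(m-2)m(m-1)$; both are non-overlapping, so by Corollary~\ref{cor:smallestzero} and Corollary~\ref{cor:c1} the growth rates $\rho_{\tau_{\max}},\rho_\sigma,\rho_{\tau_{\min}}$ are reciprocals of the smallest positive zeros of $\omega_{\tau_{\max}},\omega_\sigma,\omega_{\tau_{\min}}$, all lying in $(1,c)$, and since each of these functions equals $1$ at $z=0$, it suffices to prove
\[
\omega_{\tau_{\max}}(z)\le\omega_\sigma(z)\le\omega_{\tau_{\min}}(z)\qquad\text{for }0<z<c,
\]
with strict inequality on the left (resp. right) whenever $\sigma$ is not strongly c-Wilf-equivalent to $\tau_{\max}$ (resp. $\tau_{\min}$); the claimed inequalities for $\alpha_n$ then hold for all $n$ past two thresholds, whose maximum is $n_0$. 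By Lemma~\ref{lem:1m}, $\omega_\sigma$ depends only on the pair $(\sigma_1,\sigma_m)$, which (applying reversal and complementation, both of which preserve $\omega_\sigma$, $\alpha_n$, and non-overlap) may be assumed to lie in $\Delta_m$, since no non-overlapping pattern has $(\sigma_1,\sigma_m)=(1,m)$. Now $\tau_{\max}$ has $(\sigma_1,\sigma_m)=(1,2)$, for which $d^\sigma_2=f(1,2)=\binom{2m-3}{m-2}$ is the largest value of $f$ over $\Delta_m$, and $\tau_{\min}$ has $(\sigma_1,\sigma_m)=(1,m-1)$, for which $d^\sigma_2=f(1,m-1)=m$ is the smallest (Lemma~\ref{lem:fab}, Proposition~\ref{prop:fab2}). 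If $(\sigma_1,\sigma_m)$ is the pair of one of the extremes the corresponding inequality above is an equality, so from now on I would assume $\sigma$ is equivalent to neither, which forces $m\ge5$.

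For the right-hand inequality I would feed the upper bound for $\omega_\sigma$ and the lower bound for $\omega_{\tau_{\min}}$ from Proposition~\ref{prop:boundsomeganol} into the reduction; cancelling the common part $1-z+z^m/m!$, it remains to prove
\[
\bigl(d^\sigma_2-m\bigr)\frac{z^{2m-1}}{(2m-1)!}>d^\sigma_3\,\frac{z^{3m-2}}{(3m-2)!}\qquad(0<z<c).
\]
Since $\sigma$ is not equivalent to $\tau_{\min}$, Proposition~\ref{prop:fab2} gives $d^\sigma_2\ge f(1,m-2)=\binom{m+1}{2}$, so $1-m/d^\sigma_2\ge(m-1)/(m+1)$; and the extension bound used in~\eqref{eq:n+l}, applied to the last chain of a $3$-cluster, gives $d^\sigma_3\le\binom{3m-3}{m-1}d^\sigma_2$. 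Plugging these in and using $z<c$ collapses the display to the concrete estimate $(m-1)\,(m-1)!\,(3m-2)\ge(m+1)(2m-1)\,c^{m-1}$, which is easily checked for all $m\ge5$ (in fact $m\ge3$) since the left side grows like $m!$ while the right side grows only exponentially.

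For the left-hand inequality I would argue symmetrically, now using the upper bound for $\omega_{\tau_{\max}}$ and the lower bound for $\omega_\sigma$; the reduction becomes
\[
\Bigl(\binom{2m-3}{m-2}-d^\sigma_2\Bigr)\frac{z^{2m-1}}{(2m-1)!}>d^{\tau_{\max}}_3\,\frac{z^{3m-2}}{(3m-2)!}\qquad(0<z<c).
\]
Here the key input is that, $\sigma$ being inequivalent to $\tau_{\max}$, Proposition~\ref{prop:fab2} bounds $d^\sigma_2$ by the second-largest value $f(2,3)=3\binom{2m-5}{m-3}$, so the coefficient on the left is at least $\binom{2m-3}{m-2}-3\binom{2m-5}{m-3}=\frac{m-3}{m-1}\binom{2m-5}{m-3}$, which is positive for $m\ge5$. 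Using again $d^{\tau_{\max}}_3\le\binom{3m-3}{m-1}\binom{2m-3}{m-2}$ and $z<c$, this reduces to $2(2m-3)(2m-1)\,c^{m-1}\le(m-3)(3m-2)\,(m-1)!$, again valid for all $m\ge5$.

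The remaining cases $m\le4$ are immediate: for $m=3$ the unique non-overlapping pattern is $132=\tau_{\max}=\tau_{\min}$, and for $m=4$ the only two non-overlapping c-Wilf classes are those of $1342=\tau_{\max}$ and $1243=\tau_{\min}$, so the statement follows from the classification of~\cite{EliNoy}. When $\sigma$ is equivalent to exactly one extreme, the corresponding inequality for $\alpha_n$ is an equality and the other is obtained by applying the above to that extreme pattern (which is inequivalent to the other). I expect the only genuine obstacle to be the two concrete estimates above: one must be sure the crude bound $d^\sigma_3\le\binom{3m-3}{m-1}d^\sigma_2$ is sharp enough to make these factorial-versus-exponential inequalities hold for \emph{every} $m\ge5$, not merely asymptotically — it is, but this needs a short verification for the first few $m$. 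A secondary point of care is that Proposition~\ref{prop:boundsomeganol} controls $\omega_\sigma$ only through $d^\sigma_2$ and $d^\sigma_3$, so one must check that the $d_2$-gaps furnished by Proposition~\ref{prop:fab2} genuinely dominate the $d_3$-corrections; the estimates above show that they do.
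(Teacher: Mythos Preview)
Your proposal is correct and follows essentially the same approach as the paper: reduce to $\omega_{\tau_{\max}}(z)<\omega_\sigma(z)<\omega_{\tau_{\min}}(z)$ on $(0,c)$ via Proposition~\ref{prop:boundsomeganol}, then use Proposition~\ref{prop:fab2} to bound the $d_2$-gap and the extension inequality~\eqref{eq:n+l} to control $d_3$. The only minor differences are that the paper computes $d^{\tau_{\max}}_3=\binom{2m-3}{m-2}\binom{3m-4}{m-2}$ exactly rather than bounding it, and uses the cruder $\binom{3m-2}{m-1}$ in place of your $\binom{3m-3}{m-1}$; the resulting numerical reductions are equivalent in spirit.
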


\begin{proof}
Let $\tau=12\dots(m-2)m(m-1)$ and $\upsilon=134\dots m2$.
Assume without loss of generality that $(\sigma_1,\sigma_m)\in\Delta_m$, and that $\sigma$ is not c-Wilf-equivalent to either $\tau$ or $\upsilon$. Then,
by Lemma~\ref{lem:1m}, $(\sigma_1,\sigma_m)\notin\{(1,2),(1,m-1)\}$. Besides, $m\ge5$, since $\tau$ and $\sigma$ are the only non-overlapping patterns when $m=4$.
By Proposition~\ref{prop:fab2}, \beq\label{eq:boundsd2}\binom{m+1}{2}\le d^\sigma_2\le 3\binom{2m-5}{m-3}.\eeq

As in the proof of Theorem~\ref{thm:main}, it is enough to show that
\beq\label{eq:omegapq}\omega_{\upsilon}(z)<\omega_{\sigma}(z)<\omega_{\tau}(z)\eeq for $0<z<c$,
because this implies that the smallest positive zeroes of these functions satisfy $\rho_\upsilon^{-1}<\rho_\sigma^{-1}<\rho_\tau^{-1}$, and thus
there exists $n_0$ such that $\alpha_n(\tau)<\alpha_n(\sigma)<\alpha_n(\upsilon)$ for all $n\ge n_0$.

One can give simple formulas for all the coefficients of $\omega_{\upsilon}(z)$ and $\omega_{\tau}(z)$, although for this proof we only need
\beq\label{eq:dtu} d^\tau_2=m, \quad d^\upsilon_2=\binom{2m-3}{m-2} \quad \mbox{and} \quad d^\upsilon_3=d^\upsilon_2\binom{3m-4}{m-2},\eeq
which count linear extensions of the posets $D^\tau_2$, $D^\upsilon_2$ and $D^\upsilon_3$, drawn from left to right in Figure~\ref{fig:posetnolp}, respectively.

\begin{figure}[htb]
\centering
\begin{tikzpicture}[scale=.5]
\fill (0,0) circle (0.1) node[left]{$\pi_{1}$};
\fill (0,1) circle (0.1) node[left]{$\pi_{2}$};
\fill (0,2) circle (0.1) ;
\fill (0,3) circle (0.1) node[left]{$\pi_{m-2}$};
\fill (0,4) circle (0.1) node[left]{$\pi_{m}$};
\fill (0,5) circle (0.1) node[left]{$\pi_{m-1}$};
\fill (1,5) circle (0.1) node[right]{$\pi_{m+1}$};
\fill (1,6) circle (0.1) ;
\fill (1,7) circle (0.1) ;
\fill (1,8) circle (0.1) node[right]{$\pi_{2m-1}$};
\fill (1,9) circle (0.1) node[right]{$\pi_{2m-2}$};
\draw (0,0)--(0,5);
\draw[color=red] (0,4)--(1,5)--(1,9);
\end{tikzpicture}
\hspace{15mm}
\begin{tikzpicture}[scale=.5]
\fill (0,0) circle (0.1) node[left]{$\pi_{1}$};
\fill (0,1) circle (0.1) node[left]{$\pi_{m}$};
\fill (0,2) circle (0.1) node[left]{$\pi_{2}$};
\fill (0,3) circle (0.1) ;
\fill (0,4) circle (0.1) ;
\fill (0,5) circle (0.1) ;
\fill (0,6) circle (0.1) node[left]{$\pi_{m-2}$};
\fill (0,7) circle (0.1) node[left]{$\pi_{m-1}$};
\fill (1,2) circle (0.1) node[right]{$\pi_{2m-1}$};
\fill (1,3) circle (0.1) node[right]{$\pi_{m+1}$};
\fill (1,4) circle (0.1) ;
\fill (1,5) circle (0.1) ;
\fill (1,6) circle (0.1) ;
\fill (1,7) circle (0.1) node[right]{$\pi_{2m-3}$};
\fill (1,8) circle (0.1) node[right]{$\pi_{2m-2}$};
\draw (0,0)--(0,7);
\draw[color=red] (0,1)--(1,2)--(1,8);
\end{tikzpicture}
\hspace{15mm}
\begin{tikzpicture}[scale=.5]
\fill (0,0) circle (0.1) node[left]{$\pi_{1}$};
\fill (0,1) circle (0.1) node[left]{$\pi_{m}$};
\fill (0,2) circle (0.1) node[left]{$\pi_{2}$};
\fill (0,3) circle (0.1) ;
\fill (0,4) circle (0.1) ;
\fill (0,5) circle (0.1) ;
\fill (0,6) circle (0.1) node[left]{$\pi_{m-2}$};
\fill (0,7) circle (0.1) node[left]{$\pi_{m-1}$};
\fill (1,2) circle (0.1) node[right]{$\pi_{2m-1}$};
\fill (1,3) circle (0.1) node[right]{$\pi_{m+1}$};
\fill (1,4) circle (0.1) ;
\fill (1,5) circle (0.1) ;
\fill (1,6) circle (0.1) ;
\fill (1,7) circle (0.1) ;
\fill (1,8) circle (0.1) ;
\fill (2,4) circle (0.1) node[right]{$\pi_{3m-2}$};
\fill (2,5) circle (0.1) node[right]{$\pi_{2m}$};
\fill (2,6) circle (0.1) ;
\fill (2,7) circle (0.1) ;
\fill (2,8) circle (0.1) ;
\fill (2,9) circle (0.1) node[right]{$\pi_{3m-4}$};
\fill (2,10) circle (0.1) node[right]{$\pi_{3m-3}$};
\draw (0,0)--(0,7);
\draw[color=red] (0,1)--(1,2)--(1,8);
\draw[color=blue] (1,3)--(2,4)--(2,10);
\end{tikzpicture}
\caption{\label{fig:posetnolp} The cluster posets $D^\tau_2$ (left), $D^\upsilon_2$ (center), and $D^\upsilon_3$ (right).}
\end{figure}

By Proposition~\ref{prop:boundsomeganol} applied to $\omega_{\upsilon}(z)$ and $\omega_{\sigma}(z)$, the left inequality in~\eqref{eq:omegapq} will follow if we show that
$$d^\sigma_2\frac{z^{2m-1}}{(2m-1)!}<d^\upsilon_2\frac{z^{2m-1}}{(2m-1)!}-d^\upsilon_3\frac{z^{3m-2}}{(3m-2)!}$$
for $0<z<c$,
which is equivalent to
\beq\label{eq:dq}d^\sigma_2<d^\upsilon_2-d^\upsilon_3\frac{z^{m-1}}{(3m-2)_{m-1}}.\eeq 
Using~\eqref{eq:dtu} and the upper bound from~\eqref{eq:boundsd2}, the proof of inequality~\eqref{eq:dq} is reduced to showing that
$$\frac{3(m-1)}{2(2m-3)}+\frac{(2m-1)\,c^{m-1}}{3(3m-2)(m-1)!}<1$$
for $m\ge5$, which is straightforward, since the left hand side is clearly decreasing in $m$.

For the right inequality in~\eqref{eq:omegapq}, by Proposition~\ref{prop:boundsomeganol} applied to $\omega_{\sigma}(z)$ and $\omega_{\tau}(z)$, it suffices to show that
\beq\label{eq:pd} d^\tau_2<d^\sigma_2-d^\sigma_3\frac{z^{m-1}}{(3m-2)_{m-1}}\eeq
for $0<z<c$. A simple upper bound on $d^\sigma_3$ that follows from equations~\eqref{eq:n+l} and~\eqref{eq:boundsd2}
is $$d^\sigma_3\le d^\sigma_2\binom{3m-2}{m-1}\le 3\binom{2m-5}{m-3}\binom{3m-2}{m-1}.$$
Combining this with the the lower bound from~\eqref{eq:boundsd2} and using~\eqref{eq:dtu},
inequality~\eqref{eq:pd} follows from the fact that
$$\frac{3(2m-5)_{m-3}\,c^{m-1}}{(m-3)!(m+1)!}+\frac{1}{m+1}<\frac{1}{2}$$
for $m\ge5$, which is again straightforward because the left hand side is decreasing in $m$.
\end{proof}

\section{The least avoided pattern}\label{sec:least}

In this section we prove that the pattern $12\dots(m-2)m(m-1)$ is the least avoided not only among non-overlapping patterns, but also among all patterns of length $m$.
This settles a conjecture of Nakamura~\cite[Conjecture~2]{Nak}.

\begin{theorem}\label{thm:nak}
For every $\sigma\in\S_m$, there exists $n_0$ such that $$\alpha_n(12\dots(m-2)m(m-1))\le\alpha_n(\sigma)$$ for all $n\ge n_0$.
\end{theorem}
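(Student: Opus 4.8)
Write $\tau=12\dots(m-2)m(m-1)$, which is a non-overlapping pattern with $\tau_1=1$, $\tau_m=m-1$. As in the proofs of Theorems~\ref{thm:main} and~\ref{thm:nol}, it suffices to show that $\rho_\sigma^{-1}<\rho_\tau^{-1}$ for every $\sigma\in\S_m$ not c-Wilf-equivalent to $\tau$; since $\omega_\sigma$ and $\omega_\tau$ both equal $1$ at the origin and their smallest positive zeroes are $\rho_\sigma^{-1}$ and $\rho_\tau^{-1}$ respectively (Corollary~\ref{cor:smallestzero}), and both zeroes lie in $(1,c)$ by Corollary~\ref{cor:c1} (for $m\ge4$; the case $m=3$ is in~\cite{EliNoy}), the inequality $\rho_\sigma^{-1}<\rho_\tau^{-1}$ will follow once we prove
\[
\omega_\sigma(z)<\omega_\tau(z)\qquad\text{for }0<z<c.
\]
First I would pin down $\omega_\tau(z)$ precisely: since $\tau$ is non-overlapping, equations~\eqref{eq:snol}–\eqref{eq:omeganol} apply, the poset $D^\tau_k$ is explicit (a zigzag of $k$ chains of length $m$ glued at the near-top element, as in the left of Figure~\ref{fig:posetnolp}), and I expect a clean closed form, e.g. $d^\tau_k=\prod_{j=1}^{k-1}\binom{j(m-1)+1}{m-1}\cdot\frac{1}{\text{(something)}}$, or at least a workable recursion $d^\tau_{k+1}=d^\tau_k\cdot(\text{small factor})$. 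In particular $d^\tau_2=m$ as already recorded in~\eqref{eq:dtu}.

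\textbf{Key steps.} The main work is an \emph{upper} bound on $\omega_\sigma(z)$ and a matching \emph{lower} bound on $\omega_\tau(z)$ that separate for $0<z<c$. For the upper bound on $\omega_\sigma$, I would use Proposition~\ref{prop:boundsomega}: $\omega_\sigma(z)<1-z+\frac{z^m}{m!}-s^\sigma_2(z)+s^\sigma_3(z)$. The term $s^\sigma_2(z)=\sum_{\ell\in\O_\sigma}r^\sigma_{m+\ell,2}\frac{z^{m+\ell}}{(m+\ell)!}$ is where the pattern $\tau$ is special: I claim that among all $\sigma\in\S_m$, the quantity $s^\sigma_2(z)$ is \emph{smallest} when $\sigma=\tau$, for $z\in(0,c)$. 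The reason is two-fold: (a) if $m-1$ is the only overlap (the non-overlapping case), Proposition~\ref{prop:fab2} already tells us $d^\sigma_2=f(\sigma_1,\sigma_m)$ is minimized at $(\sigma_1,\sigma_m)=(2,m-1)$ — c-Wilf-equivalent to $\tau$ via reversal/complement — with minimum value $m$; (b) if $\sigma$ has an overlap $\ell<m-1$, then $s^\sigma_2(z)$ picks up a term of order $z^{m+\ell}$, which for small $z$ \emph{exceeds} the single order-$z^{2m-1}$ term in $s^\tau_2(z)=m\,\frac{z^{2m-1}}{(2m-1)!}$ by a wide margin — here $z<c<1.06$ and $\ell\le m-2$ make $\frac{z^{m+\ell}}{(m+\ell)!}\gg \frac{z^{2m-1}}{(2m-1)!}$, and $r^\sigma_{m+\ell,2}\ge1$. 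So in all cases $s^\sigma_2(z)\ge s^\tau_2(z)=m\frac{z^{2m-1}}{(2m-1)!}$ for $0<z<c$, with a quantifiable gap when $\sigma$ is not equivalent to $\tau$. The error term $s^\sigma_3(z)$ is controlled using the remark after Proposition~\ref{prop:decreasing}: $s^\sigma_3(z)<0.97\,s^\sigma_2(z)$, so $-s^\sigma_2(z)+s^\sigma_3(z)<-0.03\,s^\sigma_2(z)\le -0.03\,s^\tau_2(z)$, or more simply one can afford the crude bound and still close the inequality. Meanwhile, for the lower bound on $\omega_\tau$, Proposition~\ref{prop:boundsomeganol} gives $\omega_\tau(z)>1-z+\frac{z^m}{m!}-m\frac{z^{2m-1}}{(2m-1)!}$. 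Combining:
\[
\omega_\sigma(z)-\omega_\tau(z)<\Big(-s^\sigma_2(z)+s^\sigma_3(z)\Big)+m\frac{z^{2m-1}}{(2m-1)!}<0
\]
once $s^\sigma_2(z)-s^\sigma_3(z)>m\frac{z^{2m-1}}{(2m-1)!}$, which holds for $0<z<c$ by the case analysis above together with $s^\sigma_3<0.97\,s^\sigma_2$. To make the ``not equivalent to $\tau$'' hypothesis bite, the main subcase is the non-overlapping $\sigma$ with $(\sigma_1,\sigma_m)$ the \emph{second-smallest} pair, where $d^\sigma_2=\binom{m+1}{2}$ by Proposition~\ref{prop:fab2}; there $s^\sigma_2(z)=\binom{m+1}{2}\frac{z^{2m-1}}{(2m-1)!}$, and the needed inequality $0.03\binom{m+1}{2}\ge m$, or rather the sharper bookkeeping with the $0.97$ factor, is an easy check for $m\ge5$ (and $m=4$ has only $\tau$ and its reverse among non-overlapping patterns, so nothing to prove there beyond the overlapping case).

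\textbf{Main obstacle.} The delicate point is \emph{not} the overlapping case — there $s^\sigma_2$ has a term of order $z^{m+\ell}$ with $\ell\le m-2$, dwarfing everything, so the inequality is slack — but rather the non-overlapping competitors $\sigma$ with $d^\sigma_2$ only slightly larger than $d^\tau_2=m$, namely $(\sigma_1,\sigma_m)=(1,m-2)$ with $d^\sigma_2=\binom{m+1}{2}$. I need the gap $s^\sigma_2-s^\sigma_3$ versus $m\frac{z^{2m-1}}{(2m-1)!}$ to stay positive on all of $(0,c)$, and since both competing quantities are of the same order $z^{2m-1}$ near $z=c$, this reduces to a genuine numerical inequality $\binom{m+1}{2}-d^\sigma_3\frac{c^{m-1}}{(3m-2)_{m-1}}>m$; bounding $d^\sigma_3\le d^\sigma_2\binom{3m-2}{m-1}$ via~\eqref{eq:n+l} and checking monotonicity in $m$ (the left side being decreasing, the inequality reducing to its $m=5$ instance) is the kind of routine-but-must-be-done computation that is the crux. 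One further bookkeeping issue: I should double-check that the hypothesis ``$\sigma$ not c-Wilf-equivalent to $\tau$'' correctly excludes exactly the equality case and that, by reversal/complement symmetry, it is legitimate to assume $(\sigma_1,\sigma_m)\in\Delta_m$ when $\sigma$ is non-overlapping; this is already used in Theorem~\ref{thm:nol} and carries over verbatim.
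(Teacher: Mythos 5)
Your plan matches the paper's at the structural level: reduce to proving $\omega_\sigma(z)<\omega_\tau(z)$ on $(0,c)$, sandwich $\omega_\sigma$ from above via the first upper bound in Proposition~\ref{prop:boundsomega} and $\omega_\tau$ from below via Proposition~\ref{prop:boundsomeganol}, and thereby reduce everything to the inequality $s^\sigma_2(z)-s^\sigma_3(z)>m\,z^{2m-1}/(2m-1)!$ (the paper's~\eqref{eq:s23}). But there are two substantive problems with how you propose to close this.

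First, your control on $s^\sigma_3$ via the crude bound $s^\sigma_3<0.97\,s^\sigma_2$ is not sufficient, and the confident aside ``one can afford the crude bound and still close the inequality'' is false. Writing $p=\min\O_\sigma$, you combine $s^\sigma_2\ge z^{m+p}/(m+p)!$ with the $0.97$ bound to need $0.03>\frac{m\,z^{m-p-1}}{(2m-1)_{m-p-1}}$ for $z$ up to $c$. For $p=m-2$ (the overlap closest to the non-overlapping regime) this becomes $0.03>\frac{mc}{2m-1}\approx 0.53$, false for \emph{every} $m$; it also fails for $p=2$ when $m\in\{4,5\}$. The paper avoids this by using~\eqref{eq:skcompare} directly: $s^\sigma_3(z)\le s^\sigma_2(z)\sum_{\ell\in\O_\sigma}\frac{c^\ell}{\ell!}$, so the surviving factor is $1-\sum_{\ell\ge p}\frac{c^\ell}{\ell!}$, which for $p\ge2$ is at least $\approx 0.19$ and grows with $p$; that is what makes the quantitative inequality~\eqref{eq:pm} provable by monotonicity for $m\ge5$, $2\le p\le m-2$. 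You need this sharper bound, not the universal $0.97$ factor.

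Second, even with the sharper bound, the base case $m=4$, $p=2$ fails inequality~\eqref{eq:pm} (numerically $\frac{4c}{7}+(e^c-1-c)\approx1.4>1$) and the paper handles it by directly verifying~\eqref{eq:s23} for the four patterns $2413$, $2143$, $1324$, $1423$ using their cluster numbers (Table~\ref{tab:r23}). Your sketch does not anticipate that any case will need a direct finite check, and it would have to.

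A minor organizational point: you spend most of your ``main obstacle'' discussion re-deriving the non-overlapping competitors, but those are already covered by Theorem~\ref{thm:nol}, exactly as the monotone case is covered by Theorem~\ref{thm:main}. The paper dispenses with both at the outset and then may assume $2\le p\le m-2$; adopting that structure would let you skip your ``$(\sigma_1,\sigma_m)=(1,m-2)$'' subcase entirely and focus on the genuinely new overlapping case, where the real work (and your actual gap) lies.
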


\begin{proof}
Let $\tau=12\dots(m-2)m(m-1)$, and let $p=\min\O_\sigma$.
If $\sigma$ is monotone, the result follows from Theorem~\ref{thm:main}, and if $\sigma$ is non-overlapping, it was proved in Theorem~\ref{thm:nol}. Thus,
we assume that $\sigma$ is neither monotone nor non-overlapping. These conditions imply that $p\neq1$ and $p\neq m-1$, respectively, so we have $2\le p\le m-2$ and in particular $m\ge4$.

Again, as in the proof Theorem~\ref{thm:main}, it suffices to show that
\beq\label{eq:omegatau}\omega_{\sigma}(z)<\omega_{\tau}(z)\eeq for $0<z<c$,
which implies that that $\rho_\tau<\rho_\sigma$. In the rest of this proof we assume that $0<z<c$.
Using the first upper bound in Proposition~\ref{prop:boundsomega}, the lower bound in Proposition~\ref{prop:boundsomeganol} for the pattern $\tau$, and the fact that $d^\tau_2=m$, equation~\eqref{eq:omegatau} will follow if we show that
\beq\label{eq:s23}\frac{m\,z^{2m-1}}{(2m-1)!}<s^\sigma_2(z)-s^\sigma_3(z).\eeq
From equation~\eqref{eq:skcompare} with $k=2$, we get
$$s^\sigma_3(z)\le s^\sigma_2(z)\sum_{\ell\in\O_\sigma} \frac{c^\ell}{\ell!},$$
and so
$$s^\sigma_2(z)-s^\sigma_3(z)\ge s^\sigma_2(z)\left(1-\sum_{\ell\in\O_\sigma} \frac{c^\ell}{\ell!}\right)\ge s^\sigma_2(z)\left(1-\sum_{\ell=p}^{\infty}\frac{c^\ell}{\ell!}\right).$$
Combining this with the simple lower bound $$s^\sigma_2(z)=\sum_{\ell\in\O_\sigma} r^\sigma_{m+\ell,2}\frac{z^{m+\ell}}{(m+\ell)!}\ge \frac{z^{m+p}}{(m+p)!},$$
the proof of equation~\eqref{eq:s23} it reduced to showing that
$$\frac{m\,z^{2m-1}}{(2m-1)!} < \frac{z^{m+p}}{(m+p)!}\left(1-\sum_{\ell=p}^{\infty}\frac{c^\ell}{\ell!}\right),$$ which is in turn a consequence of
\beq\label{eq:pm} \frac{m\,c^{m-p-1}}{(2m-1)_{m-p-1}}+\sum_{\ell=p}^{\infty}\frac{c^\ell}{\ell!}<1.\eeq

Let us prove inequality~\eqref{eq:pm} for $m\ge5$ and $2\le p\le m-2$. Denote its left hand side by $L(m,p)$.
For fixed $p$, $L(m,p)$ is decreasing in $m$, since for the summand that depends on $m$, the quotient of its evaluation at $m+1$ by its evaluation at $m$ is
$$\frac{(m+1)(m+p+1)c}{m(2m+1)(2m)}<\frac{(m+1)c}{m(2m+1)}<1.$$
Thus, for each fixed $p\ge3$, the maximum over $m\ge p+2$ of $L(m,p)$ is attained when $m=p+2$, and it equals
$$\frac{(p+2)\,c}{2p+3}+\sum_{\ell=p}^{\infty}\frac{c^\ell}{\ell!}.$$
This function is decreasing in $p$, so it is bounded from above by its value when $p=3$, which is
$5c/9+e^c-1-c-c^2/2<1$.
For $p=2$ and $m\ge5$, the maximum of $L(m,p)$ is attained when $m=5$, and it equals
$5c^2/72+e^c-1-c<1$.

The only case that remains to be proved is when $m=4$ and $p=2$. In this case, inequality~\eqref{eq:pm} does not hold, but we can check equation~\eqref{eq:s23} directly.
Up to reversal and complementation, the patterns with $m=4$ and $p=2$ are $2413$, $2143$, $1324$ and $1423$. One can easily compute
$$s^\sigma_2(z)=r^\sigma_{6,2}\frac{z^6}{6!}+r^\sigma_{7,2}\frac{z^7}{7!} \quad \mbox{and} \quad s^\sigma_3(z)=r^\sigma_{8,3}\frac{z^8}{8!}+r^\sigma_{9,3}\frac{z^9}{9!}+r^\sigma_{10,3}\frac{z^{10}}{10!}$$
for each one of these patterns (see Table~\ref{tab:r23} and Figure~\ref{fig:posetsm4p2}), and verify that inequality~\eqref{eq:s23} holds for each one of them.
Alternatively, \eqref{eq:s23} can be proved for these patterns by computing only $s^\sigma_2(z)$ and using the first inequality in~\eqref{eq:n+l} to deduce that $s^\sigma_3(z)\le s^\sigma_2(z)(7c^2/24+7c^3/60)$.
\end{proof}

\begin{table}[htb]
$$\begin{array}{|c||c|c||c|c|c|}
\hline
\sigma & r^\sigma_{6,2} & r^\sigma_{7,2} & r^\sigma_{8,3} & r^\sigma_{9,3}& r^\sigma_{10,3}\\ \hline \hline
2413 & 2 & 9 & 5 & 108 & 234\\ \hline
2143 & 1 & 9 & 1 & 30 & 234\\ \hline
1324 & 2 & 1 & 5 & 4 & 1\\ \hline
1423 & 1 & 4 & 1 & 16 & 28\\ \hline
\end{array}$$
\caption{\label{tab:r23} The cluster numbers with $k=2,3$ for the patterns with $m=4$ and $p=2$.}
\end{table}

\begin{figure}[htb]
\centering
\bt{c|c|c|c}
\begin{tikzpicture}[scale=.5]
\fill (0,0) circle (0.1) node[left]{$\pi_3$};
\fill (0,1) circle (0.1) node[left]{$\pi_1$};
\fill (0,2) circle (0.1) node[left]{$\pi_4$};
\fill (0,3) circle (0.1) node[left]{$\pi_2$};
\fill (1,-1) circle (0.1) node[right]{$\pi_5$};
\fill (1,1) circle (0.1) node[right]{$\pi_6$};
\draw (0,0)--(0,3);
\draw[red] (1,-1)--(0,0)--(1,1)--(0,2);
\end{tikzpicture}
\begin{tikzpicture}[scale=.5]
\fill (0,0) circle (0.1) node[left]{$\pi_3$};
\fill (0,1) circle (0.1) node[left]{$\pi_1$};
\fill (0,2) circle (0.1) node[left]{$\pi_4$};
\fill (0,3) circle (0.1) node[left]{$\pi_2$};
\fill (1,1) circle (0.1) node[right]{$\pi_6$};
\fill (1,3) circle (0.1) node[right]{$\pi_7$};
\fill (1,4) circle (0.1) node[right]{$\pi_5$};
\draw (0,0)--(0,3);
\draw[red] (1,1)--(0,2)--(1,3)--(1,4);
\end{tikzpicture}
&
\begin{tikzpicture}[scale=.5]
\fill (0,0) circle (0.1) node[left]{$\pi_2$};
\fill (0,1) circle (0.1) node[left]{$\pi_1$};
\fill (0,2) circle (0.1) node[left]{$\pi_4$};
\fill (0,3) circle (0.1) node[left]{$\pi_3$};
\fill (0,4) circle (0.1) node[right]{$\pi_6$};
\fill (0,5) circle (0.1) node[right]{$\pi_5$};
\draw (0,0)--(0,3);
\draw[red] (0,3)--(0,5);
\end{tikzpicture}
\begin{tikzpicture}[scale=.5]
\fill (0,0) circle (0.1) node[left]{$\pi_2$};
\fill (0,1) circle (0.1) node[left]{$\pi_1$};
\fill (0,2) circle (0.1) node[left]{$\pi_4$};
\fill (0,3) circle (0.1) node[left]{$\pi_3$};
\fill (1,1) circle (0.1) node[right]{$\pi_5$};
\fill (1,3) circle (0.1) node[right]{$\pi_7$};
\fill (1,4) circle (0.1) node[right]{$\pi_6$};
\draw (0,0)--(0,3);
\draw[red] (1,1)--(0,2)--(1,3)--(1,4);
\end{tikzpicture}
&
\begin{tikzpicture}[scale=.5]
\fill (0,0) circle (0.1) node[left]{$\pi_1$};
\fill (0,1) circle (0.1) node[left]{$\pi_3$};
\fill (-1,2) circle (0.1) node[left]{$\pi_2$};
\fill (0,3) circle (0.1) node[left]{$\pi_4$};
\fill (1,2) circle (0.1) node[right]{$\pi_5$};
\fill (0,4) circle (0.1) node[right]{$\pi_6$};
\draw (0,0)--(0,1)--(-1,2)--(0,3);
\draw[red] (0,1)--(1,2)--(0,3)--(0,4);
\end{tikzpicture}
\begin{tikzpicture}[scale=.5]
\fill (0,0) circle (0.1) node[left]{$\pi_1$};
\fill (0,1) circle (0.1) node[left]{$\pi_3$};
\fill (0,2) circle (0.1) node[left]{$\pi_2$};
\fill (0,3) circle (0.1) node[left]{$\pi_4$};
\fill (0,4) circle (0.1) node[right]{$\pi_6$};
\fill (0,5) circle (0.1) node[right]{$\pi_5$};
\fill (0,6) circle (0.1) node[right]{$\pi_7$};
\draw (0,0)--(0,3);
\draw[red] (0,3)--(0,6);
\end{tikzpicture}
&
\begin{tikzpicture}[scale=.5]
\fill (0,0) circle (0.1) node[left]{$\pi_1$};
\fill (0,1) circle (0.1) node[left]{$\pi_3$};
\fill (0,2) circle (0.1) node[right]{$\pi_5$};
\fill (0,3) circle (0.1) node[right]{$\pi_6$};
\fill (0,4) circle (0.1) node[left]{$\pi_4$};
\fill (0,5) circle (0.1) node[left]{$\pi_2$};
\draw (0,0)--(0,1.5);
\draw (0,3.5)--(0,5);
\draw[red] (0,1.5)--(0,3.5);
\end{tikzpicture}
\begin{tikzpicture}[scale=.5]
\fill (0,0) circle (0.1) node[left]{$\pi_1$};
\fill (0,1) circle (0.1) node[left]{$\pi_3$};
\fill (0,2) circle (0.1) node[left]{$\pi_4$};
\fill (0,3) circle (0.1) node[left]{$\pi_2$};
\fill (1,3) circle (0.1) node[right]{$\pi_6$};
\fill (1,4) circle (0.1) node[right]{$\pi_7$};
\fill (1,5) circle (0.1) node[right]{$\pi_5$};
\draw (0,0)--(0,3);
\draw[red] (0,2)--(1,3)--(1,5);
\end{tikzpicture}
\\
$\sigma=2413$ & $\sigma=2143$ & $\sigma=1324$ & $\sigma=1423$
\et
\caption{\label{fig:posetsm4p2} Cluster posets for $k=2$. For each $\sigma$, their linear extensions are counted by $r^\sigma_{6,2}$ (left) and $r^\sigma_{7,2}$ (right).}
\end{figure}
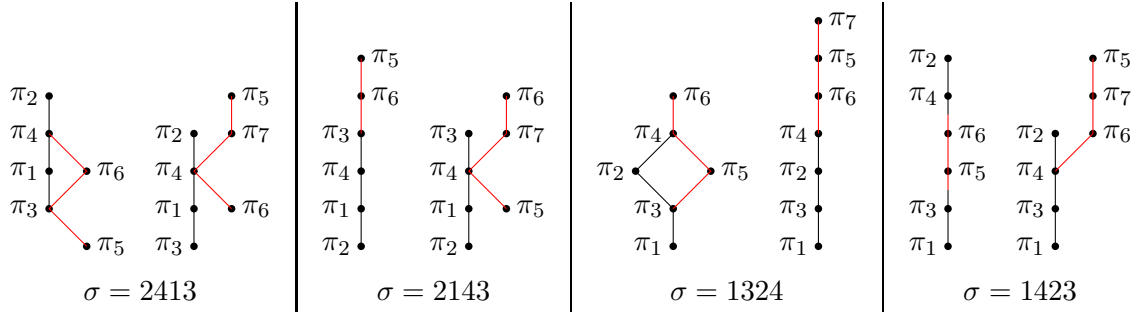

\section{Final remarks}\label{sec:final}

It is wide open to find combinatorial proofs of Theorems~\ref{thm:main}, \ref{thm:nol} and~\ref{thm:nak}. A combinatorial proof of Theorem~\ref{thm:main}, for example, could
be a length-preserving injection from permutations avoiding $\sigma$ to permutations avoiding $12\dots m$, for arbitrary $\sigma\in\S_m$.
This problem is solved only for $m=3$: a combinatorial proof of the inequality $\alpha_n(132)<\alpha_n(123)$ is given in~\cite{EliNoy}.
\ms

A different open problem, which is mentioned in~\cite{EliNoy2}, is to find a proof of Theorem~\ref{thm:EKP} that does not rely on spectral theory.
By Theorem~\ref{thm:sing}, the singularities of $P_\sigma(0,z)$ nearest to the origin have modulus $\rho_\sigma^{-1}$, and by Corollary~\ref{cor:nosing}, they are zeroes of $\omega_\sigma(z)$.
If one can show that $z=\rho_\sigma^{-1}$ is the only zero of $\omega_\sigma(z)$ with $|z|=\rho_\sigma^{-1}$, and that this zero is simple,
then Theorem~\ref{thm:EKP} will follow from standard singularity analysis~\cite[Theorem IV.10]{FS}.
While we have not been able to prove in general that $\omega_\sigma(z)$ has no other zeroes of minimum modulus,
the following result, proved along the lines of Propositions~\ref{prop:decreasing} and~\ref{prop:boundsomega}, shows that
$z=\rho_\sigma^{-1}$ is {\em simple} zero of $\omega_\sigma(z)$.

\begin{proposition}
For every $\sigma\in\S_m$, $$\omega_\sigma'(\rho_\sigma^{-1})<0.$$
\end{proposition}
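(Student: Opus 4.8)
The plan is to show that $\omega_\sigma'(\rho_\sigma^{-1})<0$ by differentiating the alternating series~\eqref{eq:omegaalt} term by term and controlling the tail, exactly in the spirit of Proposition~\ref{prop:decreasing} and Proposition~\ref{prop:boundsomega}. Write $\omega_\sigma(z)=1-z-\sum_{k\ge1}(-1)^k s^\sigma_k(z)$, so that $\omega_\sigma'(z)=-1-\sum_{k\ge1}(-1)^k (s^\sigma_k)'(z)$, the differentiation being justified inside $|z|\le C$ by the uniform bounds $s^\sigma_k(C)<0.97^k$ from the remark after Proposition~\ref{prop:decreasing} (which also bound the derivatives after a trivial adjustment, since $(s^\sigma_k)'(z)=\sum_n r^\sigma_{n,k} z^{n-1}/(n-1)!$ has the same kind of estimate). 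Since each $s^\sigma_k$ has non-negative Taylor coefficients, each $(s^\sigma_k)'(z)$ is non-negative and increasing on $(0,C]$. The key point is then that the sequence $\{(s^\sigma_k)'(z)\}_{k\ge1}$ is decreasing for $0<z\le C$: this would make $-\sum_{k\ge1}(-1)^k(s^\sigma_k)'(z)$ an alternating series with decreasing terms, hence at most $(s^\sigma_1)'(z)=z^{m-1}/(m-1)!$, giving
$$\omega_\sigma'(z)\le -1+\frac{z^{m-1}}{(m-1)!}.$$
At $z=\rho_\sigma^{-1}<c<(m!)^{1/m}$ (using Corollary~\ref{cor:c1}), the right-hand side is negative since $z^{m-1}/(m-1)!<c^{m-1}/(m-1)!<1$ for $m\ge4$; the case $m=3$ is handled separately using the explicit formula for $\omega_{132}$ (and $\omega_{123}$) from~\cite{EliNoy}.

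The technical heart is therefore the claim that $\{(s^\sigma_k)'(z)\}_{k\ge1}$ is decreasing on $(0,C]$, which I would prove by adapting the surjection $\Gamma$ from the proof of Proposition~\ref{prop:decreasing}. Differentiating $s^\sigma_{k}(z)=\sum_{(i_1,\dots,i_k)}\lext(Q^\sigma_{i_1,\dots,i_k})\,z^{n}/n!$ with $n=i_k+m-1$ gives $(s^\sigma_k)'(z)=\sum \lext(Q^\sigma_{i_1,\dots,i_k})\,z^{n-1}/(n-1)!$. Running the same extension argument as in~\eqref{eq:n+l}, namely $\lext(Q^\sigma_{i_1,\dots,i_k,i_k+\ell})\le\binom{n+\ell}{\ell}\lext(Q^\sigma_{i_1,\dots,i_k})$ for $\ell\in\O_\sigma$, one obtains
$$\lext(Q^\sigma_{i_1,\dots,i_k,i_k+\ell})\frac{z^{n+\ell-1}}{(n+\ell-1)!}\le \lext(Q^\sigma_{i_1,\dots,i_k})\frac{z^{n-1}}{(n-1)!}\cdot\frac{z^\ell\,n}{(n+\ell)_\ell},$$
and since $n\ge m\ge\ell+1$ we have $z^\ell n/(n+\ell)_\ell\le z^\ell/\ell!$ (as $n/(n+\ell)_\ell=1/((n-1)_{\ell-1}(n+\ell)/\ell)\le 1/\ell!$ when $n\ge \ell+1$; more simply $n\le n+\ell$ and $(n+\ell-1)_{\ell-1}\ge (\ell)_{\ell-1}=\ell!/1$). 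Summing over $\ell\in\O_\sigma$ and over all $(i_1,\dots,i_k)\in\I^\sigma_k$ exactly as before yields
$$(s^\sigma_{k+1})'(z)\le (s^\sigma_k)'(z)\sum_{\ell\in\O_\sigma}\frac{z^\ell}{\ell!}\le (s^\sigma_k)'(z)\sum_{\ell\in\O_\sigma}\frac{C^\ell}{\ell!},$$
and the sum $\sum_{\ell\in\O_\sigma}C^\ell/\ell!$ is $<1$ by precisely the case analysis already carried out in the proof of Proposition~\ref{prop:decreasing}: if $\sigma$ is not monotone and $\{2,3\}\not\subseteq\O_\sigma$ use~\eqref{eq:97a}; if $\{2,3\}\subseteq\O_\sigma$ then $m=4$ and use the improved bounds behind~\eqref{eq:97b}; if $\sigma$ is monotone use~\eqref{eq:97c}. (In the monotone case one should instead mimic that last estimate directly, bounding $z^\ell n/(n+\ell)_\ell\le z^\ell/(m+1)^\ell$ for $n\ge m$, which again sums to less than $1$.)

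The main obstacle is the bookkeeping in the monotone and $m=4$ sub-cases: the factor $n$ introduced by differentiation must be absorbed cleanly, and one has to be a little careful that the weaker bound $z^\ell n/(n+\ell)_\ell \le z^\ell/\ell!$ is genuinely valid for all relevant $n$ (it is, because every $n$ arising as $i_k+m-1$ satisfies $n\ge m>\ell$ for $\ell\in\O_\sigma$, so $(n+\ell)_\ell \ge (\ell+1)\cdots(2\ell)\ge \ell!\cdot n/\ell!$ fails naively — one should instead write $(n+\ell)_\ell/n=(n+\ell)(n+\ell-1)\cdots(n+1)/n\ge (n+\ell-1)\cdots(n+1)\ge \ell!$ when $n\ge\ell+1$, which is what is needed). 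Once this monotonicity of $\{(s^\sigma_k)'(z)\}_k$ is in hand, the rest is the short deduction above: $\omega_\sigma'(\rho_\sigma^{-1})\le -1+(\rho_\sigma^{-1})^{m-1}/(m-1)! < -1 + c^{m-1}/(m-1)! < 0$ for $m\ge 4$, with $m=3$ checked from the closed forms in~\cite{EliNoy}. No new ideas beyond those in Section~\ref{sec:most} are required, which is presumably why the paper says this proposition is ``proved along the lines of Propositions~\ref{prop:decreasing} and~\ref{prop:boundsomega}.''
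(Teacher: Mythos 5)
Your overall strategy — differentiate the alternating series term by term, prove $\{(s^\sigma_k)'(z)\}_k$ is decreasing via the extension map, and conclude $\omega_\sigma'(\rho_\sigma^{-1})<-1+(s^\sigma_1)'(\rho_\sigma^{-1})<0$ — is exactly the paper's strategy, and the $m=3$ reduction and final numeric step are correct. However, there is a genuine gap in the key estimate, and you also miss a simplification the paper uses.

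The gap: you invoke the \emph{weaker} of the two bounds in~\eqref{eq:n+l}, namely $\lext(Q^\sigma_{i_1,\dots,i_k,i_k+\ell})\le\binom{n+\ell}{\ell}\lext(Q^\sigma_{i_1,\dots,i_k})$. With this bound, the identity is
\[
\binom{n+\ell}{\ell}\,\frac{z^{n+\ell-1}}{(n+\ell-1)!}\;=\;\frac{z^{n-1}}{(n-1)!}\cdot\frac{(n+\ell)}{n}\cdot\frac{z^\ell}{\ell!},
\]
so you pick up an extra factor $(n+\ell)/n>1$, \emph{not} the factor $n/(n+\ell)_\ell$ you claim, and the intermediate inequality you write is false (for $n=4$, $\ell=3$ it asserts $245\le 16$). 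Since $(n+\ell)/n$ can be as large as roughly $2-1/m$, even replacing $C$ by $c$ does not rescue the sum: $2(e^c-1-c)>1$. The fix, which is what the paper actually does, is to keep the \emph{tighter} bound from~\eqref{eq:n+l}, $\binom{n-m+2\ell}{\ell}$, and note that $n-m+2\ell\le n+\ell-1$ because $\ell\le m-1$; then
\[
\binom{n+\ell-1}{\ell}\,\frac{z^{n+\ell-1}}{(n+\ell-1)!}=\frac{z^{n-1}}{(n-1)!}\cdot\frac{z^\ell}{\ell!}
\]
holds \emph{exactly}, with no extraneous factor, and $(s^\sigma_{k+1})'(z)\le (s^\sigma_k)'(z)\sum_{\ell\in\O_\sigma}z^\ell/\ell!$ follows cleanly.

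A secondary point: you propose to re-run the full case analysis of Proposition~\ref{prop:decreasing} (including the $m=4$, $\{2,3\}\subseteq\O_\sigma$ sub-cases) to bound $\sum_{\ell\in\O_\sigma}C^\ell/\ell!$ by $1$. This is unnecessary here. Since $m\ge4$, Corollary~\ref{cor:c1} gives $\rho_\sigma^{-1}<c\approx1.051$, so it suffices to work on $(0,c)$ rather than $(0,C]$; there one has $\sum_{\ell\ge2}c^\ell/\ell!=e^c-1-c<1$ directly, with no special cases for non-monotone $\sigma$, and the monotone case is handled by the geometric bound $\sum_{\ell=1}^{m-1}c^\ell/(m+\ell-1)_\ell<\frac{c/m}{1-c/m}<1$. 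This is both why the paper never needs the $m=4$ sub-cases in this proof and why the ``bookkeeping'' you were worried about does not arise.
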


\begin{proof}
We assume that $m\ge4$, since for $m=3$ the result follows immediately from the explicit expressions for $\omega_\sigma(z)$ given in~\cite{EliNoy}. With this assumption, we have $\rho_\sigma^{-1}< c$ by Corollary~\ref{cor:c1}.
Differentiating equation~\eqref{eq:omegaalt}, we get
$$\omega'_\sigma(z)=-1-\sum_{k\ge1}s'_k(z)(-1)^k,$$
where $s_k'(z)$ denotes the derivative of $s^\sigma_k(z)$.

We claim that for any $0<z<c$, the sequence $\{s'_k(z)\}_{k\ge1}$ is decreasing. The proposition follows from this claim because then
$$\omega_\sigma'(\rho_\sigma^{-1})<-1+s'_1(\rho_\sigma^{-1})<-1+\frac{c^{m-1}}{(m-1)!}<0.$$

To prove the claim, we use the same notation as in the proof of Proposition~\ref{prop:decreasing}. Differentiating equation~\eqref{eq:sk} we have
\beq\label{eq:s'k}s'_k(z)=\sum_{(i_1,i_2,\dots,i_k)\in\I^\sigma_{k}}\lext(Q^\sigma_{i_1,i_2,\dots,i_k})\frac{z^{i_k+m-2}}{(i_k+m-2)!},\eeq
and since $n-m+2\ell\le n+\ell-1$, the first inequality in~\eqref{eq:n+l} gives
$$\lext(Q^\sigma_{i_1,\dots,i_k,i_k+\ell})
\le\binom{n+\ell-1}{\ell}\lext(Q^\sigma_{i_1,\dots,i_k}),$$
so
$$\lext(Q^\sigma_{i_1,\dots,i_k,i_k+\ell})\frac{z^{n+\ell-1}}{(n+\ell-1)!}\le\lext(Q^\sigma_{i_1,\dots,i_k})\frac{z^{n-1}}{(n-1)!} \frac{z^\ell}{\ell!}.$$
Summing over $\ell\in\O_\sigma$ and over $(i_1,\dots,i_k)\in\I^\sigma_k$, and using~\eqref{eq:s'k}, we get
\beq\label{eq:s'kcompare} s'_{k+1}(z)\le s'_k(z) \sum_{\ell\in\O_\sigma} \frac{z^\ell}{\ell!}< s'_k(z) \sum_{\ell\in\O_\sigma} \frac{c^\ell}{\ell!}\eeq
for $0<z<c$. If $\sigma$ is not monotone, then $1\notin\O_\sigma$, and
$$\sum_{\ell\in\O_\sigma} \frac{c^\ell}{\ell!}\le \sum_{\ell\ge2} \frac{c^\ell}{\ell!}=e^{c}-1-c<1,$$
so $s'_{k+1}(z)< s'_k(z)$ and we are done.

If $\sigma$ is monotone, then an argument analogous to the last part of the proof of Proposition~\ref{prop:decreasing} shows that, for $0<z<c$,
$$s'_{k+1}(z)\le s'_k(z) \sum_{\ell=1}^{m-1} \frac{z^\ell}{(m+\ell-1)_\ell}<s'_k(z) \frac{c/m}{1-c/m}<s'_k(z).$$
\end{proof}

\end{document}